\newcommand{\cd}[1]{\begin{equation*}{\xymatrix{#1}}\end{equation*}}
\newcommand{\cdlabel}[2]{\begin{equation}\label{#1}{\xymatrix{#2}}\end{equation}}
\newtheorem*{rep@theorem}{\rep@title}
\newcommand{\newreptheorem}[2]{%
\newenvironment{rep#1}[1]{%
 \def\rep@title{#2 \ref{##1}}%
 \begin{rep@theorem}}%
 {\end{rep@theorem}}}
\newtheorem{theorem}{Theorem}[section]
\newtheorem{lemma}[theorem]{Lemma}
\newtheorem{cor}[theorem]{Corollary}
\theoremstyle{definition}
\newtheorem{definition}[theorem]{Definition}
\newtheorem{example}[theorem]{Example}
\newtheorem {proposition} [theorem] {Proposition}
\def\onto{\twoheadrightarrow}
\def\into{\hookrightarrow}
\def\min{\mbox{\text{min}}}
\def\min{\mbox{\rm{min}}}
\newtheorem {corollary} [theorem] {Corollary}
\newtheorem {case}{Case}[theorem]
\newtheorem {remark} [theorem] {Remark}
\newtheorem {notation} [theorem] {Notation}
\def\ker {\mathrm{ker}}
\def\mc {\mathcal}
\def\Stab {\mathrm{Stab}}
\newcommand{\bR}{\mathbb{R}}
\newcommand{\bN}{\mathbb{N}}
\newcommand{\Fix}{\mathrm{Fix}}
\newcommand{\co}{\colon\thinspace}
\def\split{\backslash\backslash}
\edef\t@mp{\catcode`\noexpand\#=\the\catcode`\#}%
    \def\h@sh{#}%
\edef\t@mp{\catcode`\noexpand\~=\the\catcode`\~}%
    \def\tild@{~}%
\begin{document}

\title{The virtual Haken conjecture} 

\author[Ian Agol]{%
        Ian Agol\\ with an appendix by Ian Agol, Daniel Groves, and Jason Manning} 
\address{%
          University of California, Berkeley\\
970 Evans Hall \#3840\\
Berkeley, CA, 94720-3840}
    \email{%
        ianagol@math.berkeley.edu}  
\dedicatory{Dedicated to Mike Freedman on the occasion of his 60th birthday}
\thanks{Agol  supported by DMS-0806027,  DMS-1105738 and the Clay Foundation}

\address{%
Department of Mathematics, Statistics, and Computer Science\\
University of Illinois at Chicago\\
322 Science and Engineering Offices (M/C 249)\\
851 S. Morgan St.\\
Chicago, IL 60607-7045
}
\email{groves@math.uic.edu}
\thanks{ Groves supported by DMS-0804365
and CAREER DMS-0953794}
\address{%
244 Mathematics Building\\
Dept. of Mathematics\\
University at Buffalo\\
Buffalo, NY 14260-2900
}
\email{j399m@buffalo.edu}
\thanks{Manning supported by DMS-1104703}

\date{%
\today}


\begin{abstract} 
We prove that cubulated hyperbolic groups are virtually special. 
The proof relies on results of Haglund and Wise which also imply that they are linear  groups, and
quasi-convex subgroups are separable.
A consequence is that closed hyperbolic 3-manifolds have finite-sheeted Haken covers, which 
resolves the virtual Haken question of Waldhausen and Thurston's virtual fibering question. 
An appendix to this paper by Agol, Groves, and Manning proves a generalization of the main
result of \cite{AGM08}. 
\end{abstract} 

\maketitle
\section{Introduction}
In this paper, we will be interested in fundamental groups of
non-positively curved (NPC) cube complexes.

\begin{theorem}\cite[Problem 11.7]{HaglundWise07} \cite[Conjecture 19.5]{Wise11} \label{virtuallyspecial}
Let $G$ be a word-hyperbolic group acting properly and cocompactly on a CAT(0)
cube complex $X$. Then $G$ has a finite index subgroup $F$ acting specially on $X$.
\end{theorem}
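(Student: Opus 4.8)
The plan is to translate ``special'' into separability of hyperplane-related subgroups, via the Haglund--Wise criterion, and then to establish that separability by an induction on $\dim X$. The inductive step will run on Wise's Malnormal Special Quotient Theorem together with the relatively hyperbolic Dehn filling technology of \cite{AGM08}, in the strengthened form proved in the appendix. By Haglund and Wise, a compact nonpositively curved cube complex is special exactly when it admits a local isometry to the Salvetti complex of a right-angled Artin group, equivalently when its hyperplanes are two-sided, embedded, non-self-osculating, and pairwise non-inter-osculating; moreover each of these four conditions can be forced in a finite cover once a suitable subgroup of $G$ is known to be separable --- a hyperplane stabilizer $H_i = \Stab(\widehat h_i)$, an index-two subgroup of one, or an intersection $H_i \cap H_j^g$ of conjugates of such. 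Since $G$ is hyperbolic these subgroups are all quasiconvex (intersections of quasiconvex subgroups of a hyperbolic group being quasiconvex), so it is enough to prove that $G$ is \emph{QCERF}, meaning that every quasiconvex subgroup is separable; conversely QCERF together with the cubulation feeds back through the same criterion to produce a finite-index $F \le G$ with $X/F$ special.

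I would prove $G$ is QCERF by induction on $\dim X$. The case $\dim X \le 1$ is immediate: then $G$ is virtually free, hence LERF and so QCERF, and any torsion-free finite-index $F$ gives the graph $X/F$, which is automatically special. For the inductive step, let $\widehat h_1, \dots, \widehat h_k$ represent the $G$-orbits of hyperplanes. Each $\widehat h_i$ is itself a CAT(0) cube complex with $\dim \widehat h_i < \dim X$ on which $H_i = \Stab(\widehat h_i)$ acts properly and cocompactly, and $H_i$ is quasiconvex in $G$ (it stabilizes a convex subcomplex) and therefore word-hyperbolic; so by the inductive hypothesis $H_i$ is QCERF, and then the Haglund--Wise criterion makes each $H_i$ virtually special.

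Next I would enlarge each $H_i$ to its commensurator $\Comm_G(H_i)$, still quasiconvex and containing $H_i$ with finite index (hence still virtually special), so that the resulting collection of quasiconvex subgroups is almost malnormal. Applying Wise's Malnormal Special Quotient Theorem to this virtually special, almost malnormal, quasiconvex data yields deep finite-index $\dot H_i \trianglelefteq \Comm_G(H_i)$ such that the Dehn filling $\overline G = G/\langle\langle \dot H_1, \dots, \dot H_k \rangle\rangle$ is word-hyperbolic and virtually special, hence QCERF. Now, to separate a given $g \in G$ from a given quasiconvex $Q \le G$ with $g \notin Q$, I appeal to the appendix's generalization of \cite{AGM08}: for all sufficiently deep fillings within the above family, $Q$ maps to a quasiconvex subgroup $\overline Q \le \overline G$ with $Q$ the full preimage of $\overline Q$, and the filling can be taken deep enough that the image of $g$ lies outside $\overline Q$; separating these in the QCERF group $\overline G$ and pulling the resulting finite quotient back along $G \to \overline G$ separates $g$ from $Q$ in $G$. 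Thus $G$ is QCERF, which closes the induction and, by the first paragraph, proves the theorem.

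The main obstacle is exactly this inductive step, and inside it the coordination of the Malnormal Special Quotient Theorem with the Dehn filling: the fillings must at once be deep enough for $\overline G$ to remain hyperbolic and virtually special, deep enough for the AGM-type quasiconvexity and preimage control to hold, and chosen to avoid the specific ``bad'' coset dictated by the separation being carried out --- and all of this while only the proper quasiconvex subgroups $H_i$, not $G$ itself, are yet known to be virtually special, which is precisely what permits the Malnormal Special Quotient Theorem to be applied. Making this bookkeeping uniform, and confirming that a single finite-index $F$ can be arranged to witness all four special conditions for all hyperplane orbits simultaneously, is the technical heart of the proof and is what the strengthened form of \cite{AGM08} in the appendix is built to deliver.
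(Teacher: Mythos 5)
Your opening reduction (special $\Leftrightarrow$ separability of hyperplane stabilizers, double-cosets, and index-two subgroups) and the induction on $\dim X$ that makes the hyperplane stabilizers $H_i$ virtually special are both in line with the paper's starting point. But the inductive step has a circularity that cannot be repaired as stated. The Malnormal Special Quotient Theorem (Theorem~\ref{malnormal}) requires the \emph{ambient} group $G$ to be virtually special; knowing only that the peripheral subgroups $H_i$ are virtually special does not put you in its hypotheses. Since ``$G$ is virtually special'' is precisely the conclusion being proved, invoking MSQT on $G$ here is circular. Consequently the claim that the filled group $\overline G$ is virtually special, and a fortiori QCERF, is unsupported; the appendix's Theorem~\ref{filling} --- the result that actually \emph{can} be used with only $H$ (not $G$) virtually special --- delivers a hyperbolic quotient $\mathcal{G}$ in which $\phi(H)$ is finite and $\phi(g)\notin\phi(H)$, but says nothing about $\mathcal{G}$ being virtually special or QCERF, and hyperbolic groups are not known to be residually finite. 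There is also a secondary error: passing from $\{H_i\}$ to $\{\mathrm{Comm}_G(H_i)\}$ does not in general produce an almost malnormal collection, because two hyperplane stabilizers can have infinite non-commensurable intersections along crossing walls; the malnormal core construction of Definition~\ref{peripheral} is considerably more elaborate than taking commensurators of the $H_i$.

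The paper avoids this trap by \emph{not} attempting to make the filled group QCERF. Instead it uses Theorem~\ref{filling} only to build an infinite regular cover $\mathcal{X}=X/\ker\phi$ of $X/G$ in which walls become compact, embedded, and $R$-separated. It then constructs a $\mathcal{G}$-invariant probability measure on colorings of the wall graph (Section~\ref{colorings}), refines the colorings to record how each wall is cut by earlier stages (Section~\ref{boundary pattern}), extracts an integral solution to polyhedral gluing equations (Section~\ref{gluing equations}), and inductively glues up a finite hierarchy $\mathcal{V}_{k+1},\dots,\mathcal{V}_0$ using the virtual gluing theorem (Theorem~\ref{virtualgluing}), each gluing step relying on MSQT/QVH only for groups that genuinely \emph{are} already known virtually special (the vertex pieces of the partial hierarchy). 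The end product is a finite cover of $X/G$ with a quasiconvex malnormal hierarchy, hence in $\mathcal{QVH}$, and Theorem~\ref{QVH} then gives virtual specialness. That combinatorial hierarchy construction is the content your proposal is missing; a direct Dehn-filling separation argument of the sort you sketch is exactly what the known technology cannot supply.
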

{\bf Remark:} The condition here that $G$ is word-hyperbolic is necessary, since there
are examples of simple groups acting properly cocompactly on a product of trees \cite{BurgerMozes}.

\begin{cor} \label{lerf}
Let $G$ be a non-elementary word-hyperbolic group acting properly and cocompactly on a CAT(0)
cube complex. The $G$ is linear, large,
and quasi-convex subgroups are separable. 
\end{cor}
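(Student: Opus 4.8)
The plan is to derive all three conclusions from Theorem~\ref{virtuallyspecial} together with the structure theory of compact special cube complexes developed by Haglund and Wise in \cite{HaglundWise07}. First I would apply Theorem~\ref{virtuallyspecial} to produce a finite index subgroup $F \le G$ acting specially on $X$; since the $G$-action is proper and cocompact, so is the $F$-action, and $Y := X/F$ is then a compact special cube complex with $\pi_1(Y) \cong F$. Because $[G:F] < \infty$ and $G$ is non-elementary and word-hyperbolic, so is $F$. Each property will be proved for $F$ first and then transferred to $G$.

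\emph{Linearity.} Here I would use the Haglund--Wise embedding of the fundamental group of a compact special cube complex into the right-angled Artin group $A(\Gamma)$ on the crossing graph $\Gamma$ of its hyperplanes, where $\Gamma$ is finite since $Y$ is compact. Right-angled Artin groups on finite graphs are linear --- for instance they are commensurable with right-angled Coxeter groups, which are linear --- so $F$ is linear. Then $G$ is linear as well, since a group with a finite index linear subgroup is linear: one induces a faithful finite-dimensional representation of $F$ up to $G$, and the induced representation is faithful because an element of its kernel must permute the coset-blocks trivially and act by the identity on the block indexed by the trivial coset, hence lies in $\ker$ of the original representation.

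\emph{Separability of quasiconvex subgroups.} I would invoke the theorem of Haglund and Wise that a quasiconvex subgroup of a compact special word-hyperbolic group is a virtual retract, hence separable. To pass to $G$: given a quasiconvex $H \le G$, the subgroup $H \cap F$ is quasiconvex in $F$ and of finite index in $H$, hence separable in $F$; since $F$ has finite index in $G$ the profinite topology of $F$ is the subspace topology inherited from $G$, so $H \cap F$ is separable in $G$; and $H$, being a finite union of cosets of $H \cap F$, is then closed in the profinite topology of $G$. \emph{Largeness.} I would first locate a quasiconvex free subgroup $K \cong F_2$ of the non-elementary hyperbolic group $F$ (take high powers of two loxodromic elements with disjoint fixed point pairs and play ping-pong), and then apply the Haglund--Wise virtual retract theorem to $K \le F$ to obtain a finite index subgroup $F' \le F$ with $K \le F'$ and a retraction $\rho \colon F' \to K$. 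Since $\rho$ restricts to the identity on $K$, it is an epimorphism from the finite index subgroup $F'$ of $G$ onto the non-abelian free group $K$, so $G$ is large.

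Since each step is an application of \cite{HaglundWise07} combined with standard group theory, I do not expect a single serious obstacle; the parts that require a genuine if short argument are the compatibility of profinite topologies used to transfer separability from $F$ to $G$, and the passage from a finite index linear subgroup to linearity of the whole group. The main point to keep straight is simply that the subgroup supplied by Theorem~\ref{virtuallyspecial} acts properly, cocompactly and specially all at once, so that $Y = X/F$ is genuinely a compact special cube complex and the Haglund--Wise results (embedding into a right-angled Artin group, and ``quasiconvex $\Rightarrow$ virtual retract'') apply without modification.
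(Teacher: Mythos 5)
Your argument is correct and is precisely the route the paper has in mind: the paper states Corollary~\ref{lerf} without proof, the abstract indicating that it follows from Theorem~\ref{virtuallyspecial} together with the Haglund--Wise structure theory for compact special cube complexes. Your spelling-out of the three transfers from $F$ to $G$ (induction for linearity, coincidence of profinite topologies on a finite-index subgroup for separability, and the virtual-retract-onto-a-free-quasiconvex-subgroup argument for largeness) is exactly what is being left implicit, and all the details you fill in --- that $F$ remains non-elementary hyperbolic, that $X/F$ is a genuine compact special cube complex so \cite[Corollary 7.4]{HaglundWise07} and the RAAG embedding apply, that $H\cap F$ is quasiconvex in $F$ and of finite index in $H$ --- are stated and checked correctly.
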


\begin{reptheorem}{virtual Haken}
Let $M$ be a closed aspherical 3-manifold. Then there is a finite-sheeted
cover $\tilde{M}\to M$ such that $\tilde{M}$ is Haken.
\end{reptheorem}
\begin{reptheorem}{virtual fiber}
Let $M$ be a closed hyperbolic 3-manifold. Then there is a finite-sheeted
cover $\tilde{M}\to M$ such that $\tilde{M}$ fibers over the circle. Moreover,
$\pi_1(M)$ is LERF and large.
\end{reptheorem}

Theorem \ref{virtual Haken} resolves a question of Waldhausen \cite{Waldhausen}. Moreover, Theorem \ref{virtual fiber}
resolves \cite[Problems 3.50-51]{Kirby} from Kirby's problem list, as well as \cite[Questions 15-18]{Th:82}. 

There has been much work on the virtual Haken conjecture before for certain classes of manifolds. These include manifolds in the Snappea census \cite{DT03}, surgeries
on various classes of cusped hyperbolic manifolds \cite{Baker89, Baker90, Baker91,  BZ00,  CL, CooperWalsh06I, CooperWalsh06II,KojimaLong88, Masters00, Masters07},
certain arithmetic hyperbolic 3-manifolds (see \cite{Schwermer04} and references therein), and manifolds satisfying various group-theoretic criteria \cite{Lackenby06, Lackenby07, Long87}. 
The approach in this paper uses techniques from geometric group theory, and as such does not specifically rely on 3-manifold techniques,
 although some of the arguments (such as word-hyperbolic Dehn surgery and hierarchies) are inspired by 3-manifold techniques.

Here is a short summary of the approach to the proof of Theorem \ref{virtuallyspecial}. In Section \ref{quotient} we use a weak separability result (Theorem \ref{filling}) to
find an infinite-sheeted regular cover $\mathcal{X}$ of $X/G$ which has embedded compact 2-sided walls. This covering space has a finite
hierarchy obtained by labeling the walls with finitely many numbers (which we think of as colors), so that walls with the same color do not intersect, and cutting successively
along the walls ordered by their labels to get an infinite collection of ``cubical polyhedra''. The goal is to construct a finite-sheeted cover which is ``modeled'' on this hierarchy for $\mathcal{X}$.
We first construct a measure on the space of colorings of the wall graph of $\mathcal{X}$ in Section \ref{colorings}. We then refine the colors to reflect how
each wall is cut up by previous stages of the hierarchy in Section \ref{boundary pattern}. We use the measure to find a solution
to certain gluing equations on the colored cubical polyhedra defined by the refined colorings, and use solutions to these equations to get the base case of the
hierarchy in Section \ref{gluing equations}. We glue up successively each stage of the hierarchy, using a gluing theorem \ref{virtualgluing} to glue at each stage
after passing to a finite-sheeted cover. The inductive  hypotheses and inductive step of the proof of Theorem \ref{virtuallyspecial} are given in
section \ref{virtual gluing}. 

Theorem \ref{filling} generalizes the main result of \cite{AGM08}, and is proved  in the appendix which is
joint work with Groves and Manning. The proof of Theorem \ref{filling} relies on Theorem \ref{malnormal} which is a result of Wise \cite[Theorem 12.3]{Wise11}.

{\bf Acknowledgements:} We thank Nathan Dunfield and Dani Wise for helpful conversations, and
Martin Bridson, Fr\'ederic Haglund, Yi Liu, Eduardo Martinez-Pedroza, and Henry Wilton for comments on an earlier draft. 

\section{Definitions}
We expect the reader to be familiar with non-positively curved (NPC) cube 
complexes \cite{BridsonHaefliger}, special cube complexes \cite{HaglundWise07}, and hyperbolic groups \cite{gromov:wordhyperbolic}.

\begin{definition}
A flag simplicial complex is a complex determined by its 1-skeleton: for every
clique (complete subgraph) of the 1-skeleton, there is a simplex with 1-skeleton
equal to that subgraph. 
A non-positively curved (NPC) cube complex is a cube complex $X$ such that
for each vertex $v\in X$, the link $link_X(v)$ is a flag simplicial complex. If $X$
is simply-connected, then $X$ is CAT(0) \cite{BridsonHaefliger}. 
More generally, an NPC cube {\it orbicomplex} or  {\it orbihedron} is a pair $(G,X)$, where each 
component of $X$ is
a CAT(0) cube complex and $G\to Aut(X)$ is a proper cocompact effective action.
We will also call such pairs {\it cubulated groups} when $X$ is connected. 
If $G$ is torsion-free, then $X/G$ is an NPC cube complex. When $G$
has torsion, we may also think of the quotient $X/G$ as an orbi-space
in the sense of Haefliger \cite{Haefliger90, Haefliger91}. The   orbihedra
we will consider in this paper will have  covering spaces which are
cube complexes, so they are {\it developable}, in which case we can ignore subtleties arising in the theory of 
general orbi-spaces. 
\end{definition}
Gluing the cubes isometrically out of unit Euclidean cubes gives a canonical
metric on an NPC cube complex. 
\begin{definition}
Given an NPC cube complex $X$, the {\it wall} of $X$ is an immersed NPC
cube complex $W$ (possibly disconnected). For each $n$-cube $C\subset X$, take the $n$
$n-1$-cubes obtained by cutting the cube in half (setting one coordinate $=0$), called the {\it hyperplanes}
of $C$. If a $k$-cube $D$
is a face of an $n$-cube $C$, then there is a corresponding embedding
of the hyperplanes of $D$ as faces of the hyperplanes of $C$. Take
the cube complex $W$ with cubes given by hyperplanes of the cubes of $X$,
and gluings given by inclusion of hyperplanes. This cube complex immerses into the
cube complex $X$. We will call this immersed cube complex the {\it wall complex} of $X$.
There is a natural line bundle over $W$ obtained by piecing together the
normal bundles in each cube. If this line bundle is non-orientable, then 
the wall $W$ is one-sided. Otherwise, it is 2-sided or co-orientable, and
there are two possible co-orientations.  
\end{definition}
\begin{definition} 
Let $X$ be an NPC cube complex. 
A subcomplex $Y\subset X$ is {\it locally convex} if the embedding $Y\to X$
is a local isometry. Similarly, a combinatorial map $Y\looparrowright X$ between
NPC cube complexes is called locally convex if it is a local isometry. 
\end{definition}
The condition of being a local isometry is equivalent to saying that $Y$ is NPC, and
for each vertex $v\in Y$,
$link_Y(v)\subset link_X(v)$ is a very full subcomplex, which means that for any two 
vertices of $link_Y(v)$ which are joined by an edge in $link_X(v)$, they are also joined by an edge in $link_Y(v)$.
For example, an embedded cube in an NPC cube complex is a locally convex subcomplex.
\begin{definition}  [Almost malnormal Collection] 
A collection of subgroups $H_1,\ldots, H_g$ of $G$ is almost malnormal
provided that $|H_i^g \cap H_j| < \infty$ unless $i = j$ and $g \in H_i$.
\end{definition}
For example, finite collections maximal elementary subgroups of a torsion-free hyperbolic group form an almost
malnormal collection.
\begin{definition}
Let $X$ be an NPC cube complex, $Y \subset X$ a locally convex subcomplex. We say
that $Y$ is {\it acylindrical} if for any map $(S^1\times[0,1],S^1\times\{0,1\})\to (X,Y)$ which is
injective on $\pi_1$  is relatively homotopic to a map $(S^1\times[0,1],S^1\times\{0,1\})\to (Y,Y)$.
\end{definition}

In particular, if $Y\subset X$ is acylindrical, then the collection of subgroups of the
fundamental groups of its components form a malnormal collection of subgroups of $\pi_1(X)$.

\begin{definition}(\cite[Definition 11.5]{Wise11}) \label{qvh}
Let $\mathcal{QVH}$ denote the smallest class of hyperbolic groups that is closed under the following
operations.
\begin{enumerate}
\item $1\in \mathcal{QVH}$
\item  If $G = A  \ast_B C$ and $A, C \in \mathcal{QVH}$ and $B$ is finitely generated and embeds by a quasi-isometry in $G$, then $G$ is in $\mathcal{QVH}$.
\item If $G = A \ast_B$ and $A \in \mathcal{QVH}$ and $B$ is f.g. and embeds by a quasi-isometry, then $G$ is in $\mathcal{QVH}$.
\item Let $H < G$ with $[G : H] < \infty$. If $H \in \mathcal{QVH}$  then $G \in \mathcal{QVH}$.
\end{enumerate}
\end{definition}
The notation $\mathcal{QVH}$ is meant to be an abbreviation for ``quasi-convex virtual hierarchy''. 
In particular, items (2) and (3) may be replaced by $G$ is a graph of groups, with vertex groups in $\mathcal{QVH}$
and edge groups quasi-convex and f.g. in $G$. A motivating example of a group in $\mathcal{QVH}$ is
a 1-relator group with torsion \cite[Theorem 18.1]{Wise11}, or a closed-hyperbolic 3-manifold
which contains an embedded quasi-fuchsian surface. 

We will not define special cube complexes in this paper (see \cite{HaglundWise07}). However, we note that a special cube complex with hyperbolic fundamental group
has embedded components of the wall subcomplex, and therefore its fundamental group is in the class $\mathcal{QVH}$.
Moreover, we have
\begin{theorem}\cite[Theorem 13.5]{Wise11}
A torsion-free hyperbolic group is in $\mathcal{QVH}$ if and only if it is the fundamental group of a virtually
special cube complex. 
\end{theorem}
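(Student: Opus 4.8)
The forward implication is elementary while the converse is the heart of Wise's theory, so I would prove them by very different methods. \emph{For virtually special $\Rightarrow\mathcal{QVH}$:} suppose a finite-index subgroup $F\le G$ is $\pi_1(X)$ for a compact special cube complex $X$; then $F$ is hyperbolic and torsion-free and the hyperplanes of $X$ are embedded, two-sided, and non-self-osculating. As $X$ is compact it has finitely many hyperplanes, so one can color them with finitely many colors so that distinct hyperplanes of a common color are disjoint. Cutting $X$ along all hyperplanes of one color gives a compact special cube complex $X'$ (Haglund--Wise) with one fewer color and exhibits $F$ as the fundamental group of a graph of groups whose vertex groups are the $\pi_1$ of the components of $X'$ and whose edge groups are hyperplane subgroups $\pi_1(H)$; these are quasi-convex (hyperplanes are convex in the CAT(0) universal cover) and finitely generated ($\Stab(H)$ acts cocompactly on $H$). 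Inducting on the number of colors --- the zero-color case being a disjoint union of points, each with $\pi_1=1\in\mathcal{QVH}$ --- and using the graph-of-groups form of operations (2) and (3) noted after Definition~\ref{qvh}, one gets $F\in\mathcal{QVH}$, and operation (4) then gives $G\in\mathcal{QVH}$.

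For the converse, ``$\mathcal{QVH}\Rightarrow$ virtually special'' is the substance of Wise's quasi-convex hierarchy theorem \cite{Wise11}, and I would prove it by induction on the length of a $\mathcal{QVH}$-hierarchy for $G$, the base case $G=1$ being a point. In the inductive step $G$ is a graph of groups with one edge, $G=A\ast_B C$ or $G=A\ast_B$, with vertex group(s) in $\mathcal{QVH}$ and $B$ quasi-convex and finitely generated in $G$; since $G$ is torsion-free so are $A,B,C$, and by the inductive hypothesis the vertex groups are virtually special, hence have separable quasi-convex subgroups (Haglund--Wise; cf.\ Corollary~\ref{lerf}) and, after passing to finite-index subgroups, act properly cocompactly on CAT(0) cube complexes. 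The first phase consists of preliminary reductions: using separability of the vertex groups, operation (4), and standard properties of quasi-convex subgroups of hyperbolic groups such as finiteness of height, one replaces $A,B,C$ (and $G$) by finite-index subgroups so that $B$ is an almost malnormal quasi-convex subgroup of $G$ and each vertex group acts properly cocompactly on a CAT(0) cube complex. The second phase cubulates $G$: combining the vertex cube complexes with the walls coming from the Bass--Serre tree of the splitting and applying Sageev's construction to the resulting wallspace --- via Wise's combination theorem for cube complexes amalgamated over an almost malnormal quasi-convex subgroup --- produces a CAT(0) cube complex $Y$ on which $G$ acts properly and cocompactly, with quasi-convex hyperplane stabilizers.

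The remaining and hardest step is to promote ``$G$ acts properly and cocompactly on $Y$'' to ``$Y/G$ is virtually special''; \textbf{this is the main obstacle.} By the Haglund--Wise characterization of virtual specialness it suffices to show that the hyperplane subgroups of the $G$-action on $Y$, and the pairwise products of such subgroups, are separable in $G$ --- precisely what is needed to pass to a finite cover of $Y/G$ in which no hyperplane self-intersects, self-osculates, or inter-osculates. Since these hyperplane stabilizers are quasi-convex, the required separability is established directly, using the Malnormal Special Quotient Theorem of \cite{Wise11} applied to the hyperplane stabilizers, together with the canonical completion and retraction of Haglund--Wise; here hyperbolicity of $G$ and the almost malnormality and quasi-convexity arranged in the first phase are essential. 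With these separability statements in hand, $Y/G$ --- and hence $G$ --- is virtually special, closing the induction.
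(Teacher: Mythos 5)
The paper does not actually prove this statement; it is quoted as \cite[Theorem 13.5]{Wise11}, and the appendix of the present paper proves the key ingredient (Wise's Theorem 13.1, the single-edge amalgam case) via the Dehn-filling result Theorem~\ref{filling}. So the right comparison is between your sketch and the appendix's strategy together with what Wise does with it.

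Your forward direction (virtually special $\Rightarrow\mathcal{QVH}$) is fine in spirit: color the finitely many hyperplanes so that same-colored ones are disjoint, cut one color at a time, observe the pieces are locally convex (hence special, with quasiconvex hyperplane subgroups), induct on the number of colors, and finish with operation (4). One would need to check that cutting preserves compactness and specialness of the pieces, but that is routine.

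The genuine gap is in your ``phase 1'' of the converse. You claim that ``using separability of the vertex groups, operation (4), and \ldots finiteness of height, one replaces $A,B,C$ (and $G$) by finite-index subgroups so that $B$ is an almost malnormal quasi-convex subgroup of $G$.'' This cannot be done with those inputs. If $g\in G$ satisfies $|B\cap B^g|=\infty$ and $g\notin B$, then for any finite-index $G'\leq G$ with $g\in G'$ the intersection $(B\cap G')\cap(B\cap G')^g$ is finite index in $B\cap B^g$, hence still infinite. So to kill the bad conjugators one must arrange $g\notin G'$ for each of the finitely many double coset representatives realizing deep intersections --- and these conjugators generally do \emph{not} lie in the vertex groups $A$ or $C$, so separability of $B$ inside $A$ and $C$ does not reach them. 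What is actually needed is separability of $B$ in $G$ itself, which is essentially the conclusion you are trying to prove (for the 3-manifold case this was the open surface subgroup separability problem). In both Wise's argument and the appendix's proof of the amalgam case, this is where the Malnormal Special Quotient Theorem enters, via a Dehn filling argument (Theorem~\ref{filling}) that inducts on the \emph{height} of $B$: one fills the peripheral structure of $B$ to produce a hyperbolic quotient where the image of $B$ has strictly smaller height, separates the given $g$ from the image of $B$, and pulls back. Only after establishing that $B$ is separable in $G$ can one pass to a finite-index $G'$ in which $B\cap G'$ is almost malnormal.

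You have therefore placed the hard content in the wrong phase. Once $B$ is separable and one has passed to a finite cover making the edge group malnormal, there is no need for the re-cubulation and fresh separability argument of your ``phase 2'' and ``phase 3''; one can instead invoke Wise's Malnormal Special Combination / Malnormal Quasiconvex Hierarchy theorems \cite[Theorems 11.2, 11.3]{Wise11} directly, as the appendix does. The MSQT is indeed essential, but its crucial role is in proving separability of $B$ (phase 1), not merely in upgrading a cubulation afterwards.
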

This theorem is generalized in the appendix:
\begin{reptheorem}{QVH}
A word-hyperbolic group is in  $\mc{QVH}$ if and only if it is virtually special. 
\end{reptheorem}
The reader not familiar with virtually  special cube complexes or their fundamental groups may therefore
take this theorem as the defining property for a virtually special group which will be used in this paper.

We state here a lemma which will be used in the case that $G$ has torsion.

\begin{lemma} \label{finite extension}
Let $G\in \mathcal{QVH}$, and suppose that $G'$ is an extension of $G$ by a finite group $K<G'$,
so there is a homomorphism $\varphi: G'\to G$ such that $ker(\varphi)=K$. Then $G'\in \mathcal{QVH}$. 
\end{lemma}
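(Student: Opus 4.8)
The plan is to induct on the structure of the class $\mc{QVH}$ as given in Definition \ref{qvh}, showing that at each stage we may lift the splitting or finite-index data from $G$ to $G'$. The base case is immediate: if $G = 1$ then $G' = K$ is finite, and a finite group is virtually trivial, hence in $\mc{QVH}$ by item (4) (with $H = 1$). For the inductive step there are three cases corresponding to items (2), (3), and (4). The cleanest of these is item (4): if $H < G$ has finite index and $H \in \mc{QVH}$, set $H' = \varphi^{-1}(H) < G'$; then $[G' : H'] = [G : H] < \infty$, and $H'$ is an extension of $H$ by $K$, so $H' \in \mc{QVH}$ by the inductive hypothesis, whence $G' \in \mc{QVH}$ by item (4) again.

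The main work is in the splitting cases. Suppose $G = A \ast_B C$ with $A, C \in \mc{QVH}$ and $B$ finitely generated and quasi-isometrically embedded in $G$ (the HNN case $G = A\ast_B$ is analogous). The natural move is to act with $G'$ on the Bass--Serre tree $T$ of this splitting via $\varphi : G' \to G$. Since $K = \ker(\varphi)$ is finite, this action of $G'$ on $T$ has finite edge and vertex stabilizers over those of $G$: the $G'$-stabilizer of a vertex mapping to the $A$-vertex is $\varphi^{-1}(A)$, an extension of $A$ by $K$, and similarly for $C$ and for the edge group $\varphi^{-1}(B)$. By Bass--Serre theory $G'$ is the fundamental group of a graph of groups with these vertex and edge groups; since $G'$ is finitely generated and $T$ cocompact, this is a finite graph of groups. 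Each vertex group $\varphi^{-1}(A)$, $\varphi^{-1}(C)$ is a finite extension of a $\mc{QVH}$ group, hence in $\mc{QVH}$ by induction (these are ``smaller'' in the well-founded sense underlying the definition of $\mc{QVH}$, since $A$ and $C$ were produced at an earlier stage). The edge groups $\varphi^{-1}(B)$ are finitely generated (finite extension of the finitely generated $B$) and, because $K$ is finite, quasi-isometrically embedded in $G'$ exactly when $B$ is quasi-isometrically embedded in $G$ — both $G \hookrightarrow G'$-type maps and $\varphi$ are quasi-isometries up to the finite kernel/cokernel. Also $G'$ is hyperbolic, being a finite extension of the hyperbolic group $G$. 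Then the remark following Definition \ref{qvh} (that items (2)--(3) may be applied to a finite graph of groups with $\mc{QVH}$ vertex groups and quasi-convex f.g. edge groups) gives $G' \in \mc{QVH}$.

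The step I expect to be the main obstacle is the bookkeeping that makes the induction well-founded: one must be careful that ``$A, C$ arise earlier in the construction of $G$'' genuinely licenses applying the inductive hypothesis to $\varphi^{-1}(A)$ and $\varphi^{-1}(C)$, rather than circularly re-invoking the lemma for $G$ itself. The right formulation is to induct on the length of a derivation witnessing $G \in \mc{QVH}$, and to observe that restricting $\varphi$ to the preimage of each piece yields a strictly shorter derivation for the piece together with its finite extension. A secondary technical point is verifying that quasi-convexity (quasi-isometric embedding) of $B$ in $G$ transfers to $\varphi^{-1}(B)$ in $G'$; this is standard since a group and any finite extension of it are quasi-isometric via the obvious maps, and quasi-isometric embedding is preserved under composition with quasi-isometries, but it should be stated explicitly.
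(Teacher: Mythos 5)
Your proof is correct and takes essentially the same approach as the paper: induct on the derivation of $G\in\mathcal{QVH}$, pull back vertex/edge groups (or the finite-index subgroup) under $\varphi$ to obtain finite extensions by $K$ that are again in $\mathcal{QVH}$ by induction, and note that the quasi-isometric embedding of $B$ transfers to $\varphi^{-1}(B)$ since $\varphi$ is a quasi-isometry. Your explicit Bass--Serre argument that $G'=\varphi^{-1}(A)\ast_{\varphi^{-1}(B)}\varphi^{-1}(C)$ simply spells out a step the paper leaves implicit.
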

\begin{proof}
First, we remark that we need only check this for central extensions of $G$, since the kernel of 
the  homomorphism $G'\to Aut(K)$ given by the conjugacy action will be a finite central extension of a finite-index
subgroup of $G$. However, this observation does not seem to simplify the argument.

We prove this by induction on the defining properties of $\mathcal{QVH}$. Consider the set of
all extensions of groups in $\mathcal{QVH}$ by the  finite group $K$. We prove that this class lies
in $\mathcal{QVH}$ by showing that these groups are closed
under the four operations defining $\mathcal{QVH}$. 

\begin{enumerate}
\item The extension of $1$ by $K$ is in $\mathcal{QVH}$ by property (4), so $K\in \mathcal{QVH}$. 

\item  Suppose $G = A  \ast_B C$ and $A, C \in \mathcal{QVH}$ and $B$ is finitely generated and embeds by a quasi-isometry in $G$.

We see that for $A'=\varphi^{-1}(A), B'=\varphi^{-1}(B), C'=\varphi^{-1}(C)$, then $\varphi$ is a quasi-isometry, so $B'<G'$ embeds
quasi-isometrically in $G'$, and is finitely generated since $B$ is. Also, $A', B', C'$ are finite extensions of $A,B, C$ by $K$ respectively. 
If $A', B', C' \in \mathcal{QVH}$, then $G'\in \mathcal{QVH}$ by condition (2). 

\item Suppose $G = A \ast_B$ and $A \in \mathcal{QVH}$ and $B$ is f.g. and embeds by a quasi-isometry.

We see that for $A'=\varphi^{-1}(A), B'=\varphi^{-1}(B)$, then $\varphi$ is a quasi-isometry, so $B'<G'$ embeds
quasi-isometrically in $G'$, and is finitely generated since $B$ is. Also, $A', B'$ are finite extensions of $A,B$ by $K$ respectively. 
If $A', B' \in \mathcal{QVH}$, then $G'\in \mathcal{QVH}$ by condition (3).

\item Suppose $H < G$ with $[G : H] < \infty$ and $H \in \mathcal{QVH}$.

Then for $H'=\varphi^{-1}(H)$, we have $[G':H']=[G:H]< \infty$, so if $H'\in \mathcal{QVH}$, and $H'$ is a
finite extension of $H$ by $K$, then $G'\in \mathcal{QVH}$ by condition (4).
\end{enumerate}
Thus, we see that finite extensions of elements of $\mathcal{QVH}$ by $K$ are in $\mathcal{QVH}$ by induction,
so $G'\in \mathcal{QVH}$. 
\end{proof}

\section{Virtual gluing}

In this section, we introduce a technical theorem which will be used in the proof of Theorem \ref{virtuallyspecial}. 
\begin{theorem} \label{virtualgluing}
Let $X$ be a compact cube complex which is virtually special and $\pi_1(X)$ hyperbolic (for each
component of $X$ if $X$ is disconnected). 
Let $Y \subset X$ be an embedded locally convex  acylindrical subcomplex such
that there is an NPC cube orbi-complex $Y_0$ and a cover $\pi: Y\to Y_0$ . 
Then there exists a regular cover $\overline{X}\to X$ such that the preimage of $Y\leftarrow\overline{Y}\subset \overline{X}$
is a  regular orbi-cover  $\overline{Y}\to Y_0$. 
\end{theorem}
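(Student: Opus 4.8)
The plan is to recast the statement group-theoretically and then produce the cover by a relatively hyperbolic Dehn filling, using the separability of such fillings of virtually special groups. Passing to components I may assume $X$ (hence $Y$ and $Y_0$) connected, and I write $G=\pi_1(X)$: it is word-hyperbolic, torsion-free (being the fundamental group of a compact aspherical complex), and virtually special, hence linear and LERF by Corollary \ref{lerf}. Let $Y^{(1)},\dots,Y^{(k)}$ be the components of $Y$ and $H_j=\pi_1(Y^{(j)})\le G$. Since $Y\subset X$ is locally convex each $H_j$ is quasiconvex in $G$, and since $Y$ is acylindrical the collection $\{H_j\}$ is malnormal in $G$, as noted after the definition of acylindrical. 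The covering $\pi\colon Y\to Y_0$ is finite-sheeted (by compactness and developability of $Y_0$), and it realizes each $H_j$ as a finite-index subgroup of $Q_{i(j)}:=\pi_1^{\mathrm{orb}}(Y_0^{(i(j))})$, where $Y_0^{(i(j))}$ is the component of $Y_0$ covered by $Y^{(j)}$; note $Q_{i(j)}$ is residually finite, being a finite extension of the subgroup $H_j$ of the linear group $G$.

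Next I would reformulate. For a finite-index normal subgroup $N\triangleleft G$ with associated regular cover $\overline X\to X$, normality of $N$ forces every component of the preimage of $Y^{(j)}$ to be a copy of the cover of $Y^{(j)}$ determined by $N\cap H_j$; hence the induced orbi-cover of $Y_0^{(i(j))}$ is the one determined by the subgroup $N\cap H_j\le Q_{i(j)}$. So it is enough to find a finite-index $N\triangleleft G$ with: (i) $N\cap H_j$ normal in $Q_{i(j)}$ for each $j$; and (ii) $N\cap H_j=N\cap H_{j'}$ as subgroups of $Q_i$ whenever $Y^{(j)},Y^{(j')}$ both cover $Y_0^{(i)}$. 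Given (i) and (ii) the deck group of $\overline Y\to Y_0$ acts transitively on fibers, so $\overline Y\to Y_0$ is a regular orbi-cover. I would in fact aim for $N\cap H_j=K_{i(j)}$, where for each $i$ the group $K_i$ is a fixed finite-index torsion-free normal subgroup of $Q_i$ contained in every $H_j$ with $i(j)=i$; such $K_i$ exist and can be chosen arbitrarily deep because $Q_i$ is residually finite. (If $H_j$ is trivial it imposes no constraint; if $H_j=G$ then $Y=X$ and one may take $N=\mathrm{Core}_{Q_{i(j)}}(G)$; so one may assume all $H_j$ infinite of infinite index.)

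The heart of the matter is to produce such an $N$ that is \emph{normal} in $G$. Separability of the $H_j$ (Corollary \ref{lerf}) readily gives a finite-index subgroup $U\le G$ with $U\cap H_j=K_{i(j)}$ for all $j$, but $U$ need not be normal, and passing to its normal core in $G$ generally shrinks the intersection with $H_j$ strictly below $K_{i(j)}$; no construction of a finite-index \emph{normal} subgroup from data on a proper finite-index subgroup avoids this loss. Instead I would use that $G$ is hyperbolic relative to the malnormal quasiconvex collection of the (nontrivial, infinite-index) $H_j$, and apply relatively hyperbolic Dehn filling: for the $K_j:=K_{i(j)}$ chosen deep enough, the quotient $\bar G:=G/\langle\langle K_1,\dots,K_k\rangle\rangle$ is hyperbolic relative to the finite groups $H_j/K_j$ — in particular $\bar G$ is hyperbolic — and the natural map $H_j\to\bar G$ has kernel exactly $K_j$. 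Now Theorem \ref{filling}, which generalizes \cite{AGM08} and applies because $G$ is virtually special, shows that the filling $\bar G$ inherits enough separability that each finite subgroup $H_j/K_j$ is separated from the identity in a finite quotient of $\bar G$; intersecting finitely many such quotients yields a finite quotient $\phi\colon\bar G\to F$ injective on every $H_j/K_j$. Then $N:=\ker\!\big(G\to\bar G\xrightarrow{\phi}F\big)$ is finite-index and normal in $G$ with $N\cap H_j=K_j$ for all $j$, and the cover $\overline X\to X$ associated to $N$ is as required.

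I expect the main obstacle to be exactly this upgrade from ``finite index'' to ``finite index and normal'' with exact control of the intersections $N\cap H_j$: it is what forces the appeal to Dehn filling together with the separability of fillings of virtually special groups (Theorem \ref{filling}), rather than a soft argument with separability or virtual retracts. The rest is bookkeeping — checking that the $K_i$ may be chosen simultaneously normal in $Q_i$, deep enough for the filling theorem, and coinciding along components covering the same $Y_0^{(i)}$, and reducing the disconnected and degenerate cases to the connected non-degenerate one.
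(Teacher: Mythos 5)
Your approach is genuinely different from the paper's, and the group-theoretic reformulation is a clean way to see what is needed; but as written there is a gap at the key separability step.

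\textbf{Comparison with the paper's proof.} The paper argues geometrically: it forms $X'=X\cup_Y C\pi$, where $C\pi$ is the mapping cylinder of $\pi\colon Y\to Y_0$, observes that $X'$ is an NPC cube orbicomplex with $\pi_1(X')$ hyperbolic (Bestvina--Feighn combination, using acylindricity), concludes $X'\in\mathcal{QVH}$ and hence virtually special (Theorem~\ref{QVH}), and then uses separability of quasiconvex subgroups in the virtually special complex $X'$ to lift a regular cover $Y'\to Y_0$ to an embedding in a finite cover $X''\to X'$, followed by a further regular cover. You instead stay entirely in the group, recast the statement as finding a finite-index normal $N\triangleleft G$ with $N\cap H_j$ prescribed normal subgroups of the orbifold groups $Q_i$, and produce $N$ via relatively hyperbolic Dehn filling followed by separating in a finite quotient. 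These are different decompositions; both ultimately rest on the same deep input from Wise (virtual specialness of a suitable hierarchy/quotient).

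\textbf{The gap.} You claim ``Theorem~\ref{filling} \ldots shows that the filling $\bar{G}$ inherits enough separability that each finite subgroup $H_j/K_j$ is separated from the identity in a finite quotient of $\bar{G}$.'' Theorem~\ref{filling} does not give finite quotients: its conclusion is a quotient to a \emph{hyperbolic} group $\mathcal{G}$ in which $\phi(H)$ is finite and $\phi(g)\notin\phi(H)$. That statement gives you no finite quotient of $\bar{G}$, and hence no $F$ and no $N$; the argument as written does not close. What you actually need at this point is that $\bar{G}$ is residually finite, which does not follow from Theorem~\ref{filling} but does follow from Theorem~\ref{malnormal} (Wise's malnormal special quotient theorem): since $G$ is virtually special and hyperbolic and $\{H_j\}$ is almost malnormal and quasiconvex, for $K_j\dot{\leq}\dot{H}_j$ deep enough the quotient $\bar{G}=G/\ll K_1,\ldots,K_k\gg$ is virtually special, hence linear (Corollary~\ref{lerf}) and residually finite, and then one can separate each nontrivial element of each finite $H_j/K_j$ in a finite quotient of $\bar{G}$, intersect kernels, and set $N=\ker(G\to\bar{G}\to F)$. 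With that substitution the remainder of your argument (choosing the $K_i\dot{\unlhd} Q_i$ inside $\bigcap_{i(j)=i}\dot{H}_j$ and deep enough for the filling theorem; checking $N\cap H_j=K_{i(j)}$; observing that a disjoint union of copies of a single regular cover of a connected $Y_0^{(i)}$ is again a regular cover) goes through, and the degenerate cases ($H_j$ trivial or $H_j=G$) are handled as you indicate.
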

{\bf Remark:} Keep in mind that all of the complexes in the statement of this theorem may be disconnected.
\begin{proof}
Let $C\pi$ be the mapping cylinder of $\pi$. 
Take $X'=X\cup_{Y} C\pi$ (we'll assume now that $X'$ is connected; the general case reduces to this case by considering 
components). Then $X'$ is an NPC cube orbi-complex by \cite[Theorem 11.1]{BridsonHaefliger}, and the subspace $Y_0\subset X'$ is 
locally convex. By \cite{Haefliger90}, the complex $X'$ is developable, so $X'= \tilde{X}'/G'$ for a hyperbolic group $G'\cong \pi_1(X')$. 
Moreover, since $Y$ is acylindrical, $Y_0\subset X'$ is also acylindrical (the reader uncomfortable with orbispaces may 
just think of $G'=\pi_1(X')$ as an acylindrical graph of groups with quasiconvex edge groups, and apply \cite{Kapovich01}). 
The subspace
$Y\subset X'$ is $\pi_1$-injective, and cutting along it gives back $X$ and $C\pi$. The cone $C\pi\simeq Y_0$
is virtually special, since $Y\subset X$ is virtually special being a convex subcomplex of a virtually special
complex with hyperbolic fundamental group, and $Y\to Y_0$ is a covering orbi-space. We may think of $C\pi \to Y_0$ as a $\ast$-bundle over $Y_0$,
where $\ast$ is a wedge of $deg(\pi)$ intervals. 
By the combination theorem of Bestvina-Feighn \cite[Corollary 7]{BestvinaFeighn92}, $\pi_1(X')$ is hyperbolic. 
Therefore $X'$ is in $\mathcal{QVH}$  and 
virtually special by Theorem \ref{QVH}. Let $Y'\to Y_0$ be a regular covering space
factoring through $Y\to Y_0$. Then there exists a finite-sheeted cover  $X''\to X'$ in which
$Y'$ lifts to an embedding since each component of $Y'$ is quasi-convex in $X'$, and therefore is separable by \cite[Corollary 7.4]{HaglundWise07}.
In particular, the preimage of $C\pi$ meeting $Y'$ in $X''$ is a product $Y' \times \ast$, where $\ast$ is a wedge of intervals. 
Taking a further regular cover of $X'$ gives a covering space in which 
the induced cover of $Y$ is a regular cover of $Y_0$. 
\end{proof}

{\bf Remark:} It is possible to give a proof of this theorem using the techniques of \cite[Theorem 6.1]{HaglundWise09} rather than citing \cite{Wise11}.

\section{Quotient complex with compact walls} \label{quotient}
Let $X$ be a CAT(0) cube complex, $G$ a hyperbolic group acting properly and cocompactly
on $X$.  Recall that we say that $(G,X)$ is a cubulated hyperbolic group. 
Since the action of $G$ is proper and cocompact, the cube complex $X$ is finite dimensional,
locally finite, and quasi-isometric to $G$. By Lemma \ref{finite extension}, we may assume that
$G$ acts faithfully on $X$, since properness implies that the subgroup acting trivially must
be finite. 
The quotient $X/G$ may be interpreted as an orbihedron \cite{Haefliger90, Haefliger91} if $G$ has torsion. 
Moreover, there are finitely many orbits of walls $W\subset X$. 
The stabilizer of a wall $G_W$ is quasi-isometric to $W$, and therefore is a quasi-convex subgroup
of $G$ since $W$ is totally geodesic and therefore convex in $X$. Let $\{W_1,\ldots, W_m\}$ be
orbit representatives for the walls of $X$ under the action of $G$. By induction on the maximal
dimension of a cube and Lemma \ref{finite extension}, we may
assume that $G_{W_i}$ is virtually special for $1\leq i \leq m$. In particular, for each $i$,
there is a finite index torsion-free normal subgroup $G_i' \dot{\trianglelefteq} G_{W_i}$ such
that $W_i/G_i'$ is a special cube complex. There exists $R>0$ such that if two walls
$W, W' \subset X$ have the property that $d(W,W')> R$, then $|G_W\cap G_{W'}|<\infty$. 

For each $1\leq i \leq m$, let 
$$\mathcal{A}_i = \{ G_{W_i} g G_{W_i} | d(g(W_i),W_i)\leq R\}-\{G_{W_i}\}.$$
Then $\mathcal{A}_i$ is finite for all $i$. 

\begin{lemma}
We may find a quotient
group homomorphism $\phi: G\to \mathcal{G}$ such that for all $1\leq i \leq m$ and  for all $G_{W_i}g G_{W_i}\in \mathcal{A}_i$,
$\phi(g)\notin \phi(G_{W_i})$
and $\phi(G_{W_j})$ is finite for all $j$. Moreover, we may assume that the action of $G_{W_i}\cap ker(\phi)$ does not
exchange the sides of $W_i$ (preserves the co-orientation), and that $ker(\phi)$ is torsion-free and $X^{(1)}/ker(\phi)$ contains no closed loops. 
\end{lemma}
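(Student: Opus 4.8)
The plan is to obtain $\phi$ by combining a separability argument with Wise's malnormal special quotient theorem, and then dealing with the auxiliary properties one at a time by passing to further quotients.

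\textbf{Step 1: Making double cosets avoidable.} For each $i$ and each nontrivial double coset $G_{W_i}gG_{W_i}\in\mathcal{A}_i$, the element $g$ does not lie in $G_{W_i}$. Since $G_{W_i}$ is quasi-convex in the hyperbolic group $G$, and (by Corollary \ref{lerf}, applied after passing to the virtually special situation guaranteed by the inductive hypothesis) quasi-convex subgroups are separable, there is a finite-index subgroup of $G$ containing $G_{W_i}$ but not $g$; intersecting over the finitely many $i$ and the finitely many double cosets in $\bigcup_i\mathcal{A}_i$ gives a single finite-index subgroup $G_0\le G$ with $g\notin G_0$ for representatives of all the bad double cosets, while $G_{W_i}\cap G_0$ has finite index in $G_{W_i}$ for each $i$. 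Taking the normal core we may assume $G_0\trianglelefteq G$. The point is that after mapping to $G/G_0$ the double cosets in $\mathcal{A}_i$ become nontrivial; but we need more, namely a quotient in which the $\phi(G_{W_i})$ are \emph{finite}, so $G/G_0$ itself is not yet enough.

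\textbf{Step 2: Collapsing the wall stabilizers.} The collection $\{G_{W_1},\dots,G_{W_m}\}$ (or rather a malnormal refinement: replace each $G_{W_i}$ by its finite-index subgroup $G_{W_i}\cap G_0$, and note that the collection of wall-stabilizers becomes almost malnormal once we have separated the near-coincidences encoded by $R$) is an almost malnormal quasi-convex collection in the hyperbolic group $G$. By Theorem \ref{filling} (the Dehn-filling/weak-separability result, generalizing \cite{AGM08}) --- or directly by Wise's malnormal special quotient theorem \cite[Theorem 12.3]{Wise11}, which is Theorem \ref{malnormal} --- there is a finite-index subgroup $\dot G_{W_i}\trianglelefteq G_{W_i}$ and a quotient $\phi_1\colon G\to\mathcal{G}_1$ with $\mathcal{G}_1$ hyperbolic, such that $\phi_1(G_{W_i})=\phi_1(\dot G_{W_i})$ is finite for every $i$, and such that $\phi_1$ is injective on balls of radius $R'$ (for $R'$ as large as we like) --- in particular injective on each $\mathcal{A}_i$, so that $\phi_1(g)\notin\phi_1(G_{W_i})$ for the bad double cosets. (Concretely one applies the filling theorem to the peripheral structure given by the $G_{W_i}$, using fillings deep enough to kill enough of each $G_{W_i}$ to make its image finite, yet to retain injectivity on the finite set $\bigcup\mathcal{A}_i$; this is exactly the kind of ``choosing the filling deep enough'' that Theorem \ref{filling} supplies.)

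\textbf{Step 3: The auxiliary properties.} It remains to arrange that $\ker(\phi)$ is torsion-free, that $X^{(1)}/\ker(\phi)$ has no closed loops, and that $G_{W_i}\cap\ker(\phi)$ preserves the co-orientation of $W_i$. Each of these is a finite-index condition that can be imposed by passing to a further finite, characteristic (hence still normal) quotient. For torsion-freeness: $G$ acts properly on the finite-dimensional CAT(0) cube complex $X$, so it has finitely many conjugacy classes of finite subgroups, and by residual finiteness of $\mathcal{G}_1$ (it is hyperbolic and virtually special by the inductive setup, hence linear by Corollary \ref{lerf}) we may pass to a further finite quotient $\mathcal{G}$ in which no nontrivial element of the (finite) image of any finite subgroup survives; equivalently $\ker$ of the composite is torsion-free. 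For the absence of closed loops in $X^{(1)}/\ker(\phi)$: the $1$-cells of $X$ lie in finitely many $G$-orbits, and ``$e$ and $ge$ share an endpoint'' for the generators $g$ realizing edge-identifications is again a finite set of nontriviality conditions, killed in a further finite quotient. For the co-orientation condition: the index-two subgroup $G_{W_i}^+\le G_{W_i}$ preserving the co-orientation has finite index, so ``land in $G_{W_i}^+$'' is imposed by a further finite quotient. Intersecting the (finitely many) kernels and taking the normal core yields the desired $\phi\colon G\to\mathcal{G}$, and all required properties hold simultaneously since each was preserved under passing to further quotients.

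\textbf{Main obstacle.} The delicate point is Step 2: one must simultaneously (a) make every $\phi(G_{W_i})$ finite and (b) keep $\phi(g)\notin\phi(G_{W_i})$ for the bad double cosets, and more generally keep $\phi$ injective on a prescribed large ball. These pull in opposite directions --- collapsing $G_{W_i}$ a lot risks collapsing the nearby translates too --- and it is precisely the strength of Theorem \ref{filling} (uniform separability under deep Dehn filling along an almost malnormal quasiconvex peripheral structure) that lets both be achieved at once, by choosing the filling kernels deep enough relative to $R$. The remaining steps are routine residual-finiteness bookkeeping.
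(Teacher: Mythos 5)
Your proposal has three substantive gaps, and the overall strategy diverges from the paper's in a way that matters.

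\textbf{Step 1 is circular.} You invoke Corollary \ref{lerf} (separability of quasi-convex subgroups of $G$) to separate the bad double coset representatives from the $G_{W_i}$. But Corollary \ref{lerf} for $G$ is a consequence of Theorem \ref{virtuallyspecial} for $G$, which is exactly what this lemma is a step in proving. The inductive hypothesis only gives virtual specialness of the \emph{wall stabilizers} $G_{W_i}$ (by induction on dimension), not of $G$ itself, so you may not assume quasi-convex subgroups of $G$ are separable. In any case this step is not needed: Theorem \ref{filling} directly produces $\phi(g)\notin\phi(H)$.

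\textbf{Step 2 applies the filling machinery to the wrong object.} You try to use the collection $\{G_{W_i}\}$ (or a finite-index refinement of it) as an almost malnormal peripheral structure, but this collection is \emph{not} almost malnormal: two crossing walls have stabilizers with infinite intersection, and this persists under passage to any finite-index subgroup. The parenthetical assertion that ``near-coincidences encoded by $R$'' can be separated away does not fix this --- crossing walls are at distance $0$, and the failure of malnormality is in $G$, not in the quotient being built. The paper's crucial device, which you've skipped, is to form for each bad $g$ a \emph{single} quasiconvex subgroup $H = G_{W_1}^{g_1}\ast\cdots\ast G_{W_i}\ast\cdots\ast G_{W_m}^{g_m}$ (a free product obtained by ping-pong, with the conjugating elements $g_j$ chosen so $g\notin H$), which is virtually special because it is a free product of virtually special groups. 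Theorem \ref{filling} is then applied to this one subgroup $H$ and this one element $g$, giving $\phi_g(g)\notin\phi_g(H)\supseteq\phi_g(G_{W_j})$ with $\phi_g(H)$ finite. One does this once per bad $g$, then intersects the finitely many kernels. Your version attempts to do one uniform filling against the peripheral collection and ``choose it deep enough,'' which is not licensed by Theorem \ref{filling} as stated and fails because the collection is not almost malnormal.

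\textbf{Step 3's handling of torsion is backwards.} Once $\phi_1\co G\to\mathcal{G}_1$ has been fixed, passing to a \emph{further} quotient of $\mathcal{G}_1$ can only enlarge the kernel; if some torsion element of $G$ already dies under $\phi_1$, no subsequent quotient of $\mathcal{G}_1$ can resurrect it, and $\ker$ can never become torsion-free this way. (You also assert residual finiteness of $\mathcal{G}_1$ via it being virtually special, which is unjustified: Theorem \ref{filling} does not assert that the quotient is virtually special.) The paper handles all auxiliary conditions on the same footing as the double cosets: for each torsion representative $t$ (and each edge-endpoint-identifying element), apply the \emph{same} technique to produce a separate homomorphism $\psi_t$ from $G$ with $\psi_t(t)\neq 1$ and $\psi_t(G_{W_i})$ finite, and then define $\ker\phi$ as the intersection of all the $\ker\phi_g$ and $\ker\psi_t$. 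Intersecting kernels of several maps out of $G$ can only shrink the kernel, which is the correct direction for ensuring things survive; composing with further quotients goes the wrong way.
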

\begin{proof}
For each $W_i$, the set of double cosets $\mathcal{A}_i=\{G_{W_i}g G_{W_i}, d(g(W_i),W_i)\leq 2R\}-\{G_{W_i}\}$ is finite.
Fix an element $g$ such that $G_{W_i}g G_{W_i} \in \mathcal{A}_i$. Choose elements $g_1,\ldots, g_m$
such that $g_i=1$ and $H = \langle G_{W_1}^{g_1}, \ldots, G_{W_i}, \ldots, G_{W_m}^{g_m}  \rangle \cong G_{W_1}^{g_1}\ast \cdots \ast G_{W_i} \ast \cdots \ast G_{W_w}^{g_w}$ and $g\notin H$, and $H<G$ is quasiconvex. This may be arranged by standard ping-pong arguments. Then $H$ is virtually
special since it is a free product of virtually special groups. 
By Theorem \ref{filling},  we may find a quotient $\phi_g: G \to \mathcal{G}_g$ such that $\phi_g(g)\notin \phi(H)$
and $\phi_g(H)$ is finite. Clearly then $\phi_g(G_{W_j})$ is finite for all $j$. Moreover, we may assume
that $ker(\phi_g)\cap G_{W_i}$ is contained in the subgroup preserving the orientation on the normal
bundle to $W_i$. Let $\mathcal{A}$ be the finitely many double coset representatives for $\cup_i \mathcal{A}_i$
we use in this construction. 

Let $\mathcal{T}\subset G$ be a finite set of representatives for each conjugacy class of torsion elements 
of $G$, such that $\mathcal{T}\cap G_{W_j}=\emptyset$ for all $j$, and conjugacy class representatives
of any group elements identifying endpoints of edges of $X^{(1)}$. 
We may also apply the same technique to find for each $g\in \mathcal{T} $   a homomorphism  
$\psi_g: G\to \mathcal{G}'$ such that $\psi_g(g)\neq 1$ and $\psi_g(G_{W_i})$ is
finite for all $i$. 

Define $\phi$ by $ker(\phi)=\cap_{g \in \mathcal{A}} ker(\phi_g) \cap_{g \in \mathcal{T}} ker(\psi_g)$,
then $\phi: G\to \mathcal{G}=G/ker(\phi)$ has the desired properties. 
\end{proof}

Let $K=ker(\phi)$, where $\phi$ comes from the previous lemma, and let $\mathcal{X}=X/K$. Then $\mathcal{X}$ is
an NPC cube complex, and for each $i$, the quotient $\mathcal{N}_R(W_i)/ (G_{W_i}\cap K)$ embeds
in $\mathcal{X}$ under the natural covering map, where $\mathcal{N}_R(W_i)$ is the neighborhood of 
radius $R$ about $W_i$.  

\begin{definition} \label{crossing graph}
Form a graph $\Gamma(\mathcal{X})$, with vertices $V(\Gamma(\mathcal{X}))$   consisting of the 
wall components of $\mathcal{W}\subset \mathcal{X}$,
and edges $E(\Gamma(\mathcal{X}))$ consisting of pairs of walls $(W_1,W_2)$ in $\mathcal{X}$ such
that $d(W_1,W_2)\leq R$. We have a natural action of $\mathcal{G}$ on $\Gamma(\mathcal{X})$. 
\end{definition}

\section{Invariant coloring measures} \label{colorings}
Let $\Gamma$ be a (simplicial) graph of bounded valence $\leq k$, and let $G$
be a group acting  cocompactly on $\Gamma$. Note that the quotient graph $\Gamma/G$ may have loops and multi-edges, so in particular may not be simplicial. We will denote the vertices
of $\Gamma$ by $V(\Gamma)$, and the edges by $E(\Gamma) \subset V(\Gamma)\times V(\Gamma)$ consisting of the symmetric relation of  pairs of adjacent vertices in $\Gamma$, so that $\Gamma$ is defined by the pair $\Gamma=(V(\Gamma),E(\Gamma))$. Since $\Gamma$ is simplicial, it has no loops,
and therefore $E(\Gamma)$ does not meet the diagonal of $V(\Gamma)\times V(\Gamma)$.
\begin{definition}
An  $n$-coloring of $\Gamma$ is a map $c: V(\Gamma)\to \{1,\ldots,n\}=[n]$ 
such that for every edge $(u,v)\in E(\Gamma)$, we have $c(u)\neq c(v)$. 
Let $[n]^{V(\Gamma)}$ be the space of all $n$-colorings of the trivial graph $(V(\Gamma),\emptyset)$, and endow this with the product topology to make it a compact space (Cantor set). 
Then the space of $n$-colorings of $\Gamma$ is naturally a closed $G$-invariant subspace of $[n]^{V(\Gamma)}$ which we will denote $C_n(\Gamma)$. The set $M(C_n(\Gamma))$ of probability
measures on $C_n(\Gamma)$ endowed with the weak*  topology is a convex compact
metrizable set. Let $M_G(C_n(\Gamma))\subset M(C_n(\Gamma))\subset M([n]^{V(\Gamma)})$ denote the $G$-invariant measures. 
\end{definition} 

Since we have assumed that the degree of every vertex of $\Gamma$ is $\leq k$,
then clearly $C_{k+1}(\Gamma)$ is non-trivial: order the vertices, and color each vertex inductively by one of $k+1$ colors not already used by one of its $\leq k$ neighbors. 

\begin{theorem} \label{color measure}
The set $M_G(C_{k+1}(\Gamma))$ is non-empty, that is, there exists a $G$-invariant probability
measure on the space of $k+1$-colorings of the graph $\Gamma$. 
\end{theorem}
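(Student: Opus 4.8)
The plan is to use a compactness/averaging argument, since $M_G(C_{k+1}(\Gamma))$ is a weak* limit of "almost invariant" measures, and the cocompactness of the $G$-action lets us average coherently. First I would fix a finite set $F \subset V(\Gamma)$ meeting every $G$-orbit, and for each finite subgraph $\Gamma_j \subset \Gamma$ exhausting $\Gamma$ (say $\Gamma_j$ the ball of radius $j$ around $F$ in a chosen word metric on $\Gamma$, which is finite by local finiteness), I would build a probability measure $\mu_j$ on $C_{k+1}(\Gamma)$ that is "approximately $G$-invariant on $\Gamma_j$." The point is that the greedy inductive coloring described just before the theorem produces \emph{some} point of $C_{k+1}(\Gamma)$, so $C_{k+1}(\Gamma)$ is non-empty; the task is to upgrade a point to an invariant measure. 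The natural device is a Følner-type or Markov-operator fixed point: because $\Gamma$ has bounded valence and the quotient $\Gamma/G$ is finite, $G$ acts on the compact convex set $M(C_{k+1}(\Gamma))$, and one wants a fixed point.

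Concretely, I would proceed as follows. Since $C_{k+1}(\Gamma)$ is compact, $M(C_{k+1}(\Gamma))$ with the weak* topology is a nonempty compact convex subset of the dual of $C(C_{k+1}(\Gamma))$, and the $G$-action on it is by affine homeomorphisms. If $G$ were amenable we would be done immediately by the Markov--Kakutani / Day fixed point theorem; but $G$ is only assumed to act cocompactly and need not be amenable. So instead I would exploit that the constraint defining a coloring is \emph{local}: the set $C_{k+1}(\Gamma)$ is a subshift of finite type over the bounded-valence graph $\Gamma$. The key step is to run a "random greedy colouring" that is manifestly $G$-equivariant in distribution. Put an auxiliary i.i.d.\ family of uniform $[0,1]$ labels on $V(\Gamma)$; this gives a $G$-invariant probability measure on $[0,1]^{V(\Gamma)}$. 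From the labels, define a colour of each vertex by a local rule using the labels in a bounded neighbourhood — e.g.\ process vertices in the order given by their labels and assign the least colour in $[k+1]$ not yet used by already-processed neighbours. Locality of $\Gamma$ (bounded valence, hence the colour of $v$ depends only on the labels in a component of the "activation graph" through $v$, which is a.s.\ finite by a standard percolation-type argument since activation requires a monotone path) makes this colour a measurable $G$-equivariant function of the label field, so pushing the invariant label measure forward along $v\mapsto(\text{colour of }v)$ yields the desired $\mu \in M_G(C_{k+1}(\Gamma))$.

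The main obstacle is precisely the a.s.\ finiteness of the "range of dependence": I must show that the colour of a given vertex depends on only finitely many labels almost surely, so that the pushforward lands in $C_{k+1}(\Gamma)$ (every edge constraint satisfied) and is a genuine probability measure rather than something defined only on a conull set with no control. This is where bounded valence $\le k$ enters: the chain of vertices that must be coloured before $v$ forms a path along which the i.i.d.\ labels are increasing, and the expected number of such paths of length $\ell$ out of $v$ is at most $k^\ell/\ell!$, which is summable, so by Borel--Cantelli the dependence region is a.s.\ finite. Once that is in hand, $G$-invariance of the resulting measure is immediate from $G$-invariance of the i.i.d.\ label field together with $G$-equivariance of the colouring rule, and the verification that $\mu(C_{k+1}(\Gamma))=1$ is the routine observation that adjacent vertices never receive the same colour by construction. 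I would close by noting that cocompactness is not even strictly needed for existence here, only for the later applications, but bounded valence is essential, consistent with the remark preceding the statement.
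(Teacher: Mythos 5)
Your argument is correct, but it takes a genuinely different route from the paper. The paper works inside the simplex of $G$-invariant measures on $[n]^{V(\Gamma)}$: starting from the uniform product measure $\mu_n$ for large $n$, it defines $G$-equivariant continuous ``color-reduction'' maps $p_n\co[n]^{V(\Gamma)}\to[n-1]^{V(\Gamma)}$ (reassign each vertex colored $n$ the least color not used by its neighbors), observes that the pushforward $P_{n*}\mu_n$ into $[k+1]^{V(\Gamma)}$ has ``weight'' (total mass of the finitely many orbit-representative bad-edge events $B_{e_i}$) at most $m/n$, and then extracts a weak* subsequential limit $\mu_\infty$ with weight zero, hence supported on $C_{k+1}(\Gamma)$. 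Cocompactness enters precisely in making the weight a finite sum. Your construction is instead a one-shot factor-of-iid: push forward the i.i.d.\ uniform $[0,1]$ label field under the $G$-equivariant local rule ``process in increasing label order, take the least unused color among processed neighbors.'' The crux you correctly identify is the a.s.\ finiteness of the dependence cluster of each vertex, which follows since the expected number of monotone-label paths of length $\ell$ out of a vertex is at most $k^\ell/(\ell+1)!$ (you wrote $k^\ell/\ell!$, a harmless off-by-one), summable, so Borel--Cantelli applies; after that, $G$-invariance of the pushforward and the coloring constraint are immediate. Your approach buys an explicit, canonical invariant measure and, as you note, does not actually need cocompactness, only bounded degree; the paper's approach is ``softer'' (pure compactness, no probability estimate), avoids the percolation-style dependence estimate, but produces only a subsequential limit with no explicit description.
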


\begin{proof}
For a $G$-invariant measure $\nu\in M_G([n]^{V(\Gamma)})$, we want to define a quantity
which measures how far $\nu$ is from giving a $G$-invariant coloring measure in $M_G(C_n(\Gamma))$. 
For an edge $e=(u,v)\in E(\Gamma)$, let $B_e = \{ f\in [n]^{V(\Gamma)} | f(u)=f(v)\}$.
This is the subset of colorings of $V(\Gamma)$ which violate the coloring condition
for $\Gamma$ at the edge $e$, so that $C_n(\Gamma)= \cap_{e\in E(\Gamma)} B_e^{c}$. 
Let $\{e_1,\ldots,e_m\}\subset E(\Gamma)$ be a complete set of representatives of the orbits
of the action of $G$ on $E(\Gamma)$, which exists because we have assumed
that the action of $G$ on $\Gamma$ is co-compact. 
For $\nu\in M_G([n]^{V(\Gamma)})$ define $weight(\nu)= \sum_{i=1}^{m} \nu(B_{e_i})$. 
If $\nu$ is a $G$-invariant coloring measure of $\Gamma$, then regarding  $\nu \in M_G(C_n(\Gamma))\subset M_G([n]^{V(\Gamma)})$, we have 
$weight(\nu)=0$. Conversely, if $weight(\nu)=0$ for $\nu\in M_G([n]^{V(\Gamma)})$, then 
$\nu\in M_G(C_n(\Gamma))$. To see this, let $supp(\nu)\subset [n]^{V(\Gamma)}$ be the support
of $\nu$, which is $\cap_{C\ \text{compact}, \nu(C)=1} C$. Let $e\in E(\Gamma)$, then $\nu(B_{e})=0$, since there
exists $e_i, g\in G$ such that $e=g(e_i)$, so $\nu(B_e)=\nu(B_{g(e_i)}) = \nu(B_{e_i})=0$ by $G$-invariance of $\nu$ and $weight(\nu)=0$.
Therefore $supp(\nu)\subset B_{e}^c$ for all $e\in E(\Gamma)$, and therefore $supp(\nu)\subset \cap_{e\in E(\Gamma)} B_e^{c}=C_n(\Gamma)$.
So $\nu\in M_G(C_n(\Gamma))$. 

Take the uniform measure $\mu_{n}$  on $[n]^{V(\Gamma)}$, which
is the product of $V(\Gamma)$ copies of the uniform measure on $[n]$, so $\mu_n\in M([n]^{V(\Gamma)})$. 
Clearly $\mu_n$ is $G$-invariant under the action of $G$ on $V(\Gamma)$, $\mu_n\in M_G([n]^{V(\Gamma)})$, since
$G$ permutes the uniform measures on $[n]$.
We note that for the uniform measure $\mu_n$, we have
$\mu_n(B_e)=1/n$. 
Then we see that $weight(\mu_n)= m/n$. 

For  $n>k+1$, we define a map $p_n: [n]^{V(\Gamma)} \to [n-1]^{V(\Gamma)}$ which depends on $\Gamma$ and which
is $G$-equivariant. For $c\in [n]^{V(\Gamma)}$ and $v\in V(\Gamma)$, define $p_n(c)(v)= c(v)$ if $c(v)<n$, and if $c(v)= n$, then $p_n(c)(v)=  \min( \{1,\ldots,n-1\} - \{c(u) | (u,v) \in E(\Gamma)\})$. Since the degree of $v$ is $\leq k$, this set is non-empty, 
and  has a well-defined minimum which is $\leq k+1$. In other words, $p_n(c)$ assigns to each vertex colored $n$ the smallest color not used
by its neighbors, and otherwise does not change the color. 
Then $p_n(c)$ has the property that for any two vertices
$u,v\in V(\Gamma)$ with $p_n(c)(u)=p_n(c)(v)$, then $c(u)=c(v)$. In
particular, if $c$ is an $n$-coloring of $\Gamma$, then  $p_n(c)$ is an $n-1$-coloring
of $\Gamma$. This implies that for all measures $\nu \in M_G([n]^{V(\Gamma)})$,
 $weight(p_{n*}(\nu))\leq weight(\nu)$, where $p_{n*}(\nu)$ is the push-forward measure. 
 Notice that the map $p_n$ is continuous, since its definition is local, so that the push-forward
 is well-defined. 

This gives a map $P_n: [n]^{V(\Gamma)}\to [k+1]^{V(\Gamma)}$ defined
by $P_n= p_{k+1}\circ p_{k+2}\circ \cdots \circ p_n$. 
We get induced a map $P_{n*}:M([n]^{V(\Gamma)})\to M([k+1]^{V(\Gamma)})$ by push-forward of measures, and induces a map by restriction
$P_{n*}:M_G([n]^{V(\Gamma)})\to M_G([k+1]^{V(\Gamma)})$ because the maps $p_n$ are $G$-equivariant. 

Finally, we have $weight(P_{n*}(\mu))\leq weight(\mu)$ for any $\mu \in M_G([n]^{V(\Gamma)})$. 
In particular, $weight(P_{n*}(\mu_n)) \leq weight(\mu_n)=m/n$. Take a subsequence
of $\{P_{n*}(\mu_n)\}$ converging to a $G$-invariant measure $\mu_{\infty}\in M_G([k+1]^{V(\Gamma)})$. Then $weight(\mu_{\infty})=0$, which implies that $\mu_{\infty}\in M_G(C_{k+1}(\Gamma))$. 
\end{proof} 

\section{Cube complexes with boundary patterns} \label{boundary pattern}

Given a locally finite cube complex $X$, subdivide each $n$-cube into $2^n$ cubes
of half the size to get a cube complex $\dot{X}$ (Figure \ref{cube}(b)). This is called the {\it cubical barycentric subdivision}, 
and is analogous to the barycentric subdivision
of a complex, in that one inserts new vertices in the barycenter of each cube, and connects each new barycenter
vertex of each cube to the barycenter vertex of each cube containing it, then filling in cubes using
the flag condition (the difference with
the usual barycentric subdivision is that one does not connect the vertices of $X$ to the new
barycenter vertices, which would give a simplicial complex). 
We may then regard the union of the hyperplanes  $W\subset X$ as the union of the
new topological codimension-one cubes of $\dot{X}$, which is the locally convex subcomplex $\dot{W}\subset \dot{X}$ spanned by
the barycenter vertices of $\dot{X}$. Consider splitting $X$ along the hyperplanes $W$. 
By this, we mean remove each hyperplane, getting a disconnected complex, then put in 
$2^k$ copies of each codimension-$k$ cube that is removed to get a complex $X\split W=\dot{X}\split \dot{W}$ (see Figure \ref{cube}(c)). We will think of this as a cube
complex ``with boundary'', where the boundary consists of the new cubes that were attached at the missing hyperplanes. What remains are stars of the vertices of $X$. 

\begin{figure}[htb] 
	\begin{center}
	\subfigure[Cube in $X$]{\epsfig{figure=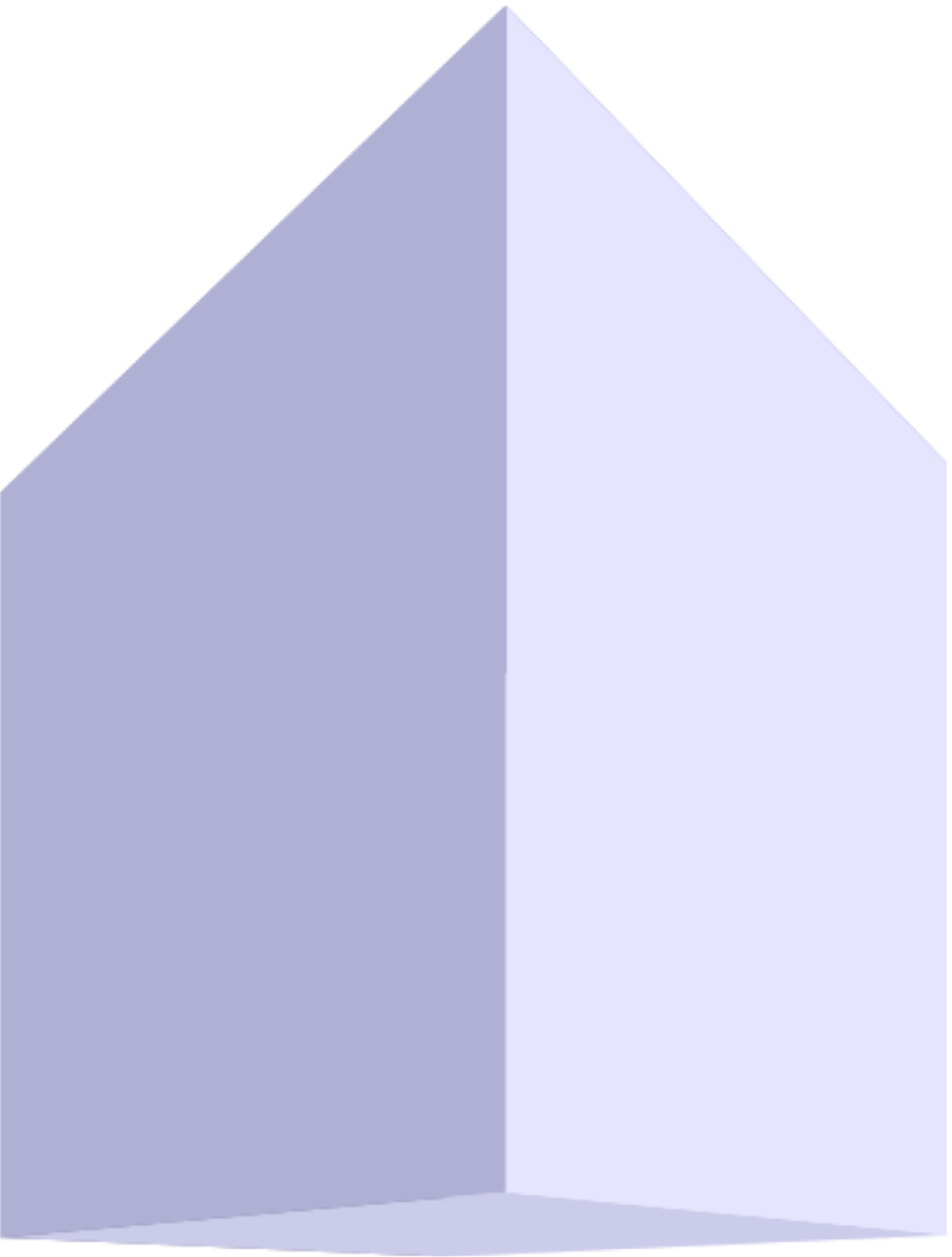,angle=0,width=.3\textwidth}}\quad
	\subfigure[Cubical barycentric subdivision in $\dot{X}$]{\epsfig{figure=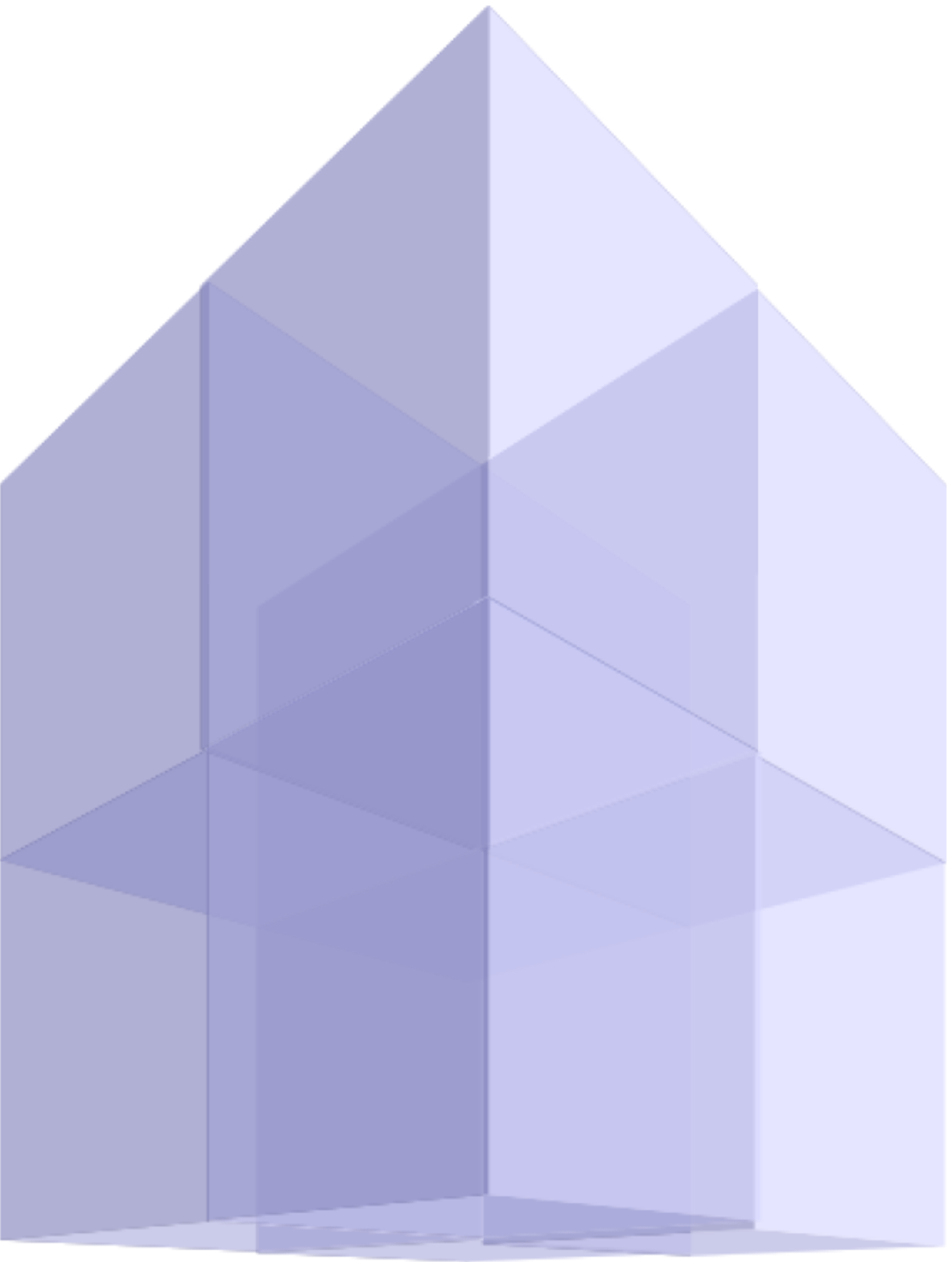,angle=0,width=.3\textwidth}}\quad
	\subfigure[Cube splitting $X\split W$]{\epsfig{figure=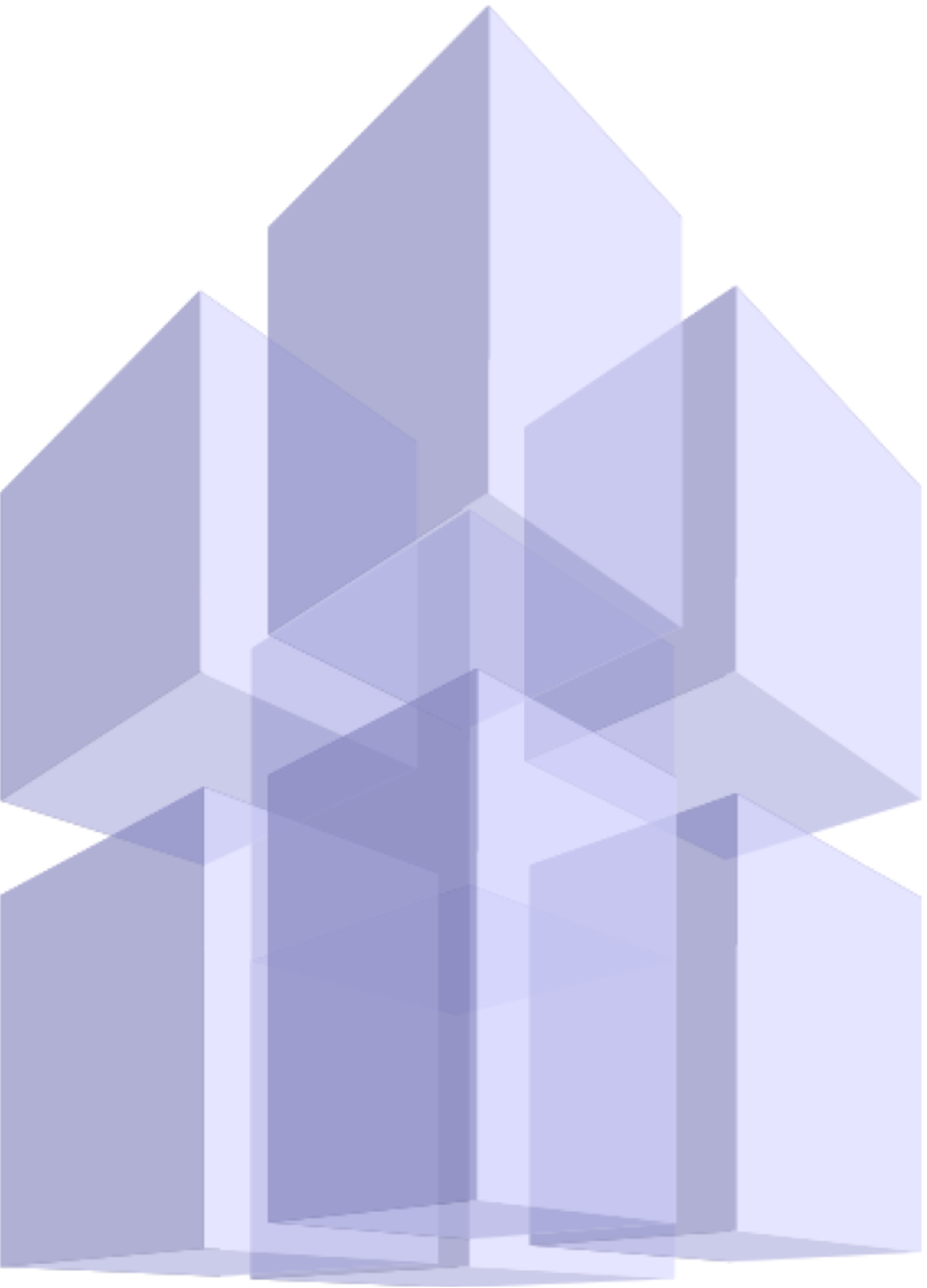,angle=0,width=.3\textwidth}}
	\caption{\label{cube} Subdividing and splitting a cube complex}
	\end{center}
\end{figure}

\begin{definition}
A {\it cubical polyhedron} $\mathcal{P}$ is a CAT(0) cube complex with a distinguished vertex $v \in \mathcal{P}$ which 
 is contained in every maximal cube. The polyhedron is determined by a simplicial graph $\Gamma$ which is the 1-skeleton of
$link(v)$, which is a flag CAT(1) spherical simplicial complex. To recover $\mathcal{P}$ from a connected simplicial graph $\Gamma$,
we may make a spherical
complex by taking a right-angled spherical $k$-simplex for each $k+1$ clique of $\Gamma$, glued
together by the natural inclusion of cliques. 
This gives a CAT(1) spherical complex since it is flag \cite[Theorem 5.18]{BridsonHaefliger}. Then attach the corner of a Euclidean
$k+1$ cube to each spherical $k$-simplex to get a cubical polyhedron. This is a non-positively
curved cube complex $P(\Gamma)$ \cite[Theorem 5.20]{BridsonHaefliger}. 
\end{definition}
The stars of vertices in an NPC cube complex are cubical polyhedra, and if we split $\dot{X}$ 
along all  of its hyperplanes, we get a union of stars of vertices and therefore cubical polyhedra.  

\begin{definition}
A cube complex with boundary pattern is a cube complex $X$ of bounded dimension together with locally convex subcomplexes
$\{\partial_1X,\ldots,\partial_n X\}$, $\partial_i X\subset X$ satisfying the following inductive definition (induct on the maximal dimension cube):
\begin{itemize}
\item
For each $i$, there is an isometrically embedded
open product neighborhood
$\partial_i X=\partial_i X\times 0 \subset \partial_i X\times [0,1) \subset X$. In particular, the dimension of each maximal
cube of $X_i$ is one less than the dimension of a cube of $X$ containing it. The intersection of $\partial_i X$ with
each cube must contain at most one face, and the intersection of all of the strata of the boundary pattern which
have non-trivial intersection with a fixed cube is a non-empty face. 
\item
For each $i$, the subcomplex $\partial_i X$  forms a 
cube complex with boundary pattern $\{\partial_j X\cap \partial_i X | j\neq i\}$, with induced collar neighborhoods $(\partial_j X\cap \partial_i X)\times[0,1) = (\partial_j X\times[0,1))\cap \partial_i X$. 
\end{itemize}
\end{definition}
What one may keep in mind for this definition is the analogy of a boundary pattern for
a hierarchy of a 3-manifold, arising in the work of Haken \cite{Haken62}. 

If $X$ is a cube complex with boundary pattern $\{\partial_1X,\ldots,\partial_n X\}$, then each $\partial_i X$ gets
a co-orientation of the collar neighborhood $\partial_i X\times [0,1)$, pointing into $X$ from $0$ to $1$ (into
the cube complex). We may similarly define a cubical orbihedron with boundary pattern as a cube complex
with boundary pattern quotient a group that preserves the boundary pattern.

{\bf Examples:} Take a graph $X$, and let $\partial_0 X\subset X$ be the vertices of $X$ which have degree 1, then $X$
 is a cube complex with boundary pattern $\partial_0 X$. 

Take a cubical polyhedron $P(\Gamma)$ associated to a simplicial graph $\Gamma$. For each vertex $v\in \Gamma$, 
consider the subcomplex defined by $link(v)\subset \Gamma$, $P(link(v))\subset P(\Gamma)$. 
Then $ [0,1]\times P(link(v))\subset P(\Gamma)$. The collection $\{ \{1\}\times P(link(v)) | v\in V(\Gamma)\}$
forms a boundary pattern of $P(\Gamma)$.  We will denote the union of this collection as $\partial P(\Gamma)$.
We will call components $\{1\}\times P(link(v))$ the {\it facets} of $P(\Gamma)$. If a collection of facets
of $P(\Gamma)$ have non-trivial intersection, then their intersection is a convex subcomplex we'll
call a {\it face}. The minimal faces will be points, which we will call {\it vertices of $P(\Gamma)$}. 

{\bf Remark:} The choice of terminology {\it cubical polyhedron} is meant to evoke a 
polyhedron. When $X$ is PL equivalent to a manifold, then each component of $X\split W$ is
homeomorphic to a ball, with the boundary pattern corresponding to the facets of the polyhedron.

\begin{definition}
Let $X$ be a cube complex with boundary pattern $\{\partial_1X,\ldots,\partial_n X\}$. 
Suppose there is an isometric involution $\tau: \partial_n X\to \partial_n X$ without fixed points, and
with the property that $\tau(\partial_i X\cap \partial_n X)=\partial_i X\cap \partial_n X$ for $i<n$. Then we may form the quotient
complex $X/\tau$, where for each cube  $c\subset \partial_n X$, we amalgamate the cubes $c\times[-1,0]$
and $\tau(c)\times [0,1]$ into a single cube isometric to $c\times [-1,1]$. We obtain an induced
boundary pattern $\{\partial_i X / (\tau_{| \partial_i X\cap \partial_n X})\ |  i<n\}$. This operation on $X$ is called {\it gluing a 
cube complex with boundary pattern}.
\end{definition}

\begin{definition} 
Let $X$ be a cube complex and let $W\subset X$ be a union of disjointly embedded 2-sided walls. 
We may {\it split} $X$ along $W$ by taking  the path-metric completion of $X-W$. This is
obtained from $\dot{X}-\dot{W}$ by adjoining two copies of $W$ on either side of the wall, which we'll
call $W^{\uparrow}$ and $W^{\downarrow}$, with co-orientations pointing into $X- W$. We will
denote this $X\split W$. If $X$ is a cube complex with boundary pattern $\{\partial_1 X,\ldots, \partial_m X\}$,
then $X\split W$ is a cube complex with boundary pattern $\{\partial_1 X, \ldots, \partial_m X, W^{\uparrow}\cup W^{\downarrow}\}$. 
This is the reverse operation from gluing a cube complex with boundary pattern. 
\end{definition}

If we have a cube complex $X$ with embedded walls $W\subset X$, then $X\split W$ (which really
means we split $\dot{X}$ successively along each component of $\dot{W}$) will
be a union of cubical polyhedra which are stars of vertices of $X$. 
For the complex $\mathcal{X}$ with walls $\mathcal{W}$ constructed at the end of Section \ref{quotient}, let $\mathcal{P(X)}$ be the set of cubical polyhedra which are
stars of vertices, and let $P_1, \ldots, P_p$ be orbit representatives under the action of $\mathcal{G}$ of the
cubical polyhedra of $\mathcal{X}\split \mathcal{W}$ which are vertex stars (we will think of these as the polyhedra
obtained by splitting $\mathcal{X}$ along its walls). Similarly, let $W_1,\ldots, W_w$ be orbit reps. of the walls $\mathcal{W}$
under the action of $\mathcal{G}$. Let $\mathcal{F(X)}$ denote the set of all
cubical polyhedra of the walls $\mathcal{W}$. These are the stars of midpoints of edges of $\mathcal{X}$ in $\mathcal{W}$. 
Let $\mathcal{F}=\{F_1, \ldots, F_f\}$ be
orbit representatives of the action of $\mathcal{G}$ on  $\mathcal{F(X)}$ (we will assume that each $F_i\subset W_j$ for some $j$).
 There is a canonical map $wall:\mathcal{F(X)}\to  V(\Gamma(\mathcal{X}))=\mathcal{W}$
defined by $wall(F)=W$ if $F\subset W\in V(\Gamma(\mathcal{X}))$.  Notice that there is a one-to-one correspondence
between $P_i$ and the vertices of $X/G$, and between $F_i$ and the edges of $X/G$.

\begin{definition}
Let $k=maxdegree(\Gamma(\mathcal{X}))$, then $C_{k+1}(\Gamma(\mathcal{X}))\neq \emptyset$. 
We want to define an equivalence relation $\simeq$ on $V(\Gamma(\mathcal{X}))\times C_{k+1}(\Gamma(\mathcal{X}))$. 
What this equivalence relation captures in part is how each wall is cut up by the previous walls in the ordering determined by a
wall coloring. In other words, a coloring determines a hierarchy for $\mathcal{X}$, and an induced hierarchy on 
each wall of $\mathcal{X}$. The equivalence relation captures how each wall is cut up by previous stages
of the hierarchy. This refinement is important for when we reconstruct the hierarchy to make sure after
gluing up the $j$th level of the hierarchy that the $j-1$st levels and lower are still matching up to finite index. 
We define it inductively. 

First, for $(v,c), (w,d) \in V(\Gamma(\mathcal{X}))\times C_{k+1}(\Gamma(\mathcal{X}))$, 
if $(v,c)\simeq (w,d)$, then we must have $v=w$ and $c(v)=d(w)$ (so the partition respects the vertex type). 
In other words, we want to define a partition refining the partition  $\{v\}\times C_{k+1}(\Gamma(\mathcal{X}))$ (but it 
will depend on the partitions associated to nearby vertices which is why we define it for all vertices simultaneously). 

\begin{enumerate}
\item We have $(v,c)\simeq (v,d)$ if $c(v)=d(v)=1$. 
\item We have $(v,c)\simeq (v,d)$ if $c(v)=d(v)=2$ and for all $w$ such that $(w,v)\in E(\Gamma(\mathcal{X}))$,
we have $c(w)=1 \iff d(w)=1$. 
\item[($j$)] The $j$th inductive step of the definition is given by: 
we have $(v,c)\simeq (v,d)$ if $c(v)=d(v)=j$ with $2\leq j \leq k+1$, and for all $w$ such that $(w,v)\in E(\Gamma(\mathcal{X}))$,
we have $(w,c)\simeq (w,d)$ if $c(w)<j$ or $d(w)<j$.
\end{enumerate}

Notice that the equivalence class of $(v,c)$ where $c(v)=j$ depends only on $c$ restricted to the ball of 
radius $j-1$ about $v$ in $\Gamma(\mathcal{X})$. This implies that the equivalence classes are
clopen sets as subsets of $\{v\}\times C_{k+1}(\Gamma(\mathcal{X}))$. In fact, if we think of the coloring $c$ as 
a Morse function on the vertices $V(\Gamma)$, then the equivalence class of $(v,c)$ depends only on the
``descending subgraph'' of $v$, consisting of the union of all paths in $\Gamma(\mathcal{X})$ starting at $v$ in which the values of $c$ are decreasing.

We now want to define an equivalence relation $\simeq$ on the set $\mathcal{F(X)}\times C_{k+1}(\Gamma(\mathcal{X}))$. 
We decree $(E,c)\simeq (E,d)$ if $(wall(E),c)\simeq (wall(E),d)$. 

We define an equivalence relation $\simeq$ on $\mathcal{P(X)}\times C_{k+1}(\Gamma(\mathcal{X}))$ 
$(P,c)\simeq (P,d)$ if for every facet $F\subset \partial P$, $(F,c)\simeq (F,d)$. In particular, the colors $c(F)$
of the facets $F\subset \partial P$ depend only on the $\simeq$ equivalence class of $(P,c)$. 

We have an action of $\mathcal{G}$ on each of these equivalence relations, by the action
for $g\in \mathcal{G}$ given by $g\cdot (v,c) = (g\cdot v, c\circ g^{-1})$, for $(v,c)\in \mathcal{W}\times C_{k+1}(\Gamma(\mathcal{X}))$,
and a similar formula for the action on faces and polyhedra. There are finitely many $\mathcal{G}$-orbits of equivalence
classes under the action of $\mathcal{G}$, and we may find representatives among $\{W_1,\ldots, W_w\} \times C_{k+1}(\Gamma(\mathcal{X}))$,
 $\{F_1,\ldots,F_f\} \times C_{k+1}(\Gamma(\mathcal{X}))$, and $\{P_1,\ldots,P_p\} \times C_{k+1}(\Gamma(\mathcal{X}))$. 
\end{definition}

\section{Gluing equations} \label{gluing equations}
We will consider weights on equivalence classes of polyhedra 
$\omega: \mathcal{P(X)}\times C_{k+1}(\Gamma(\mathcal{X}))/\simeq \to \mathbb{R}$ which are invariant
under the action of $\mathcal{G}$, so that $\omega(g\cdot(P,c))= \omega(P,c)$, for all $g\in \mathcal{G}$ and
satisfying the {\it polyhedral gluing equations}. A weight $\omega$ will be determined by its values on $[(P_j,c)]$, $1\leq j\leq p, c\in C_{k+1}(\Gamma(\mathcal{X}))$, and therefore is determined by finitely many variables. 
Given polyhedra $P, P'\subset \dot{\mathcal{X}}$ sharing a facet $F\subset \partial P, F\subset \partial P'$, we
get an equation on the weights for each equivalence class of $\{F\}\times C_{k+1}(\Gamma(\mathcal{X}))/\simeq$. 
For each equivalence class $[(F,c)]\in \{F\}\times C_{k+1}(\Gamma(\mathcal{X}))/\simeq$, we have the equation
$$\sum_{[(P,d)] | (F,d)\simeq(F,c)} \omega([(P,d)]) = \sum_{[(P',d)]| (F,d)\simeq(F,c)} \omega([(P',d)]).$$
The polyhedral gluing equations on the polyhedra equivalence class weights are 
the equations obtained for each equivalence class $[(F,c)]$. These equations are also $\mathcal{G}$-equivariant,
so in particular are determined by the equations for equivalence classes $[(F_i,c)]$, $1\leq i\leq f, c\in C_{k+1}(\Gamma(\mathcal{X}))$. 
Thus, we have finitely many equations determined by equivalence classes $[(F_i,c)], 1\leq i\leq f$ on finitely many variables
$\omega([P_j,c)]), 1\leq j\leq p$, together with the equations determined by $\mathcal{G}$-invariance.

For a measure $\mu\in M_{\mathcal{G}}(C_{k+1}(\Gamma(\mathcal{X})))$, we get non-negative polyhedral 
weights $\mu([(P,c)]) = \mu(\{d \in C_{k+1}(\Gamma(\mathcal{X})) |\ (P,c)\simeq (P,d)\})$ (and $\mu([F,c])$ is
similarly defined for each facet $F$). 
These weights satisfy the polyhedral gluing equations. Consider a facet $F = \partial P \cap \partial P'$, and
an equivalence class $[(F,c)]$ which defines a gluing equation.
Then using the additivity property of $\mu$, we have 
$$\sum_{[(P,d)] | (F,d)\simeq (F,c)} \mu([(P,d)])=\mu(\{d | (F,d)\simeq (F,c)\}) = $$ 
$$\mu([(F,c)]) =\sum_{[(P',d)] | (F,d)\simeq (F,c)} \mu([(P',d)]).$$
So $\mu$ gives a non-negative real solution to the polyhedral gluing equations. 

Since these equations are defined by finitely many linear equations with integral coefficients, there is
a non-negative non-zero integral weight function satisfying the polyhedral gluing equations, 
$\Omega: \mathcal{P(X)}\times C_{k+1}(\Gamma(\mathcal{X}))/\simeq\to \mathbb{Z}_{\geq 0}.$
In the next section we will use $\Omega$ to create a tower hierarchy which gives a finite-sheeted cover  of $X/G$ in $\mathcal{QVH}$.

\section{Virtually gluing up the hierarchy} \label{virtual gluing}

Let $(w,c)\in V(\Gamma(\mathcal{X}))\times C_{k+1}(\Gamma(\mathcal{X}))$. 
Let $\mathcal{W}_j^c =\cup \{ w\in V(\Gamma(\mathcal{X})) | c(w)=j\} \subset \mathcal{W}$, $1\leq j \leq k+1$, the union of
walls colored $j$ by $c$.
Suppose that $c(w)=j>1$, then define $w_1^c = w\split (w\cap \mathcal{W}_1^c)$. 
We may think of $w_1^c$ as being immersed in the wall $\dot{w}$. 
Then define inductively immersed complexes in $\dot{w}$ by $w_i^c = w_{i-1}^c\split (w^c_{i-1}\cap \mathcal{W}_i^c)$, for 
$2\leq i\leq j-1$. We don't split $w$ along $\mathcal{W}_j^c$ since $w\subset \mathcal{W}_j^c$. We will
use the notation $w_{j-1}^c = w^c$, since the $j=c(w)$ is implicitly determined (if $j=1$, then $w^c=w$). The complex
$w^c$ has a boundary pattern, given by $\partial_i(w^c)=w^c\cap \mathcal{W}_i^c$, $1\leq i\leq j-1$. 

{\bf Claim:} If $(w,c)\simeq (w,d)$, then $w^c=w^d$ (as cube complexes with boundary pattern).
In other words, $w^c$ depends only on the equivalence class $[(w,c)]$. This follows 
because $w^c$ is determined by $w\cap \mathcal{W}_i^c$, $1\leq i\leq j-1$, which
depends only on the equivalence class of $(w,c)$ since if $v$ is a component of $\mathcal{W}_i^c$
with $w\cap v\neq \emptyset$, then $(w,v)\in E(\Gamma(\mathcal{X}))$. 

Consider now the symmetries of $w^c$ which preserve the equivalence class. 
That is, consider $Stab(w^c)\leq  \mathcal{G}$, given by $g\in \mathcal{G}$ 
such that $g(w^c)=w^c$ (in particular, $g(w)=w$) and $(w,c\circ g^{-1})=(g(w),c\circ g^{-1})\simeq (w,c)$.  
Now define $w^c_{\mathcal{G}} = w^c/Stab(w^c)$, with its corresponding boundary pattern $\partial_i(w^c_{\mathcal{G}})=\partial_i(w^c)/Stab(w^c)$, $1\leq i\leq j-1$. 
In general, $w^c_{\mathcal{G}} $ will be an orbihedron with boundary pattern. 

For each $j$, $1\leq j\leq k+1$, let $\mathcal{Y}_j= \sqcup_{ [(w,c)], c(w)=j} w^c_{\mathcal{G}}$ (where we take precisely one $\mathcal{G}$-orbit representative 
of the equivalence relation $\simeq$ so that there are only finitely many equivalence classes $[(w,c)]$ up to the action of $\mathcal{G}$, and therefore $\mathcal{Y}_j$
is a compact cube complex). The orbi-complex $\mathcal{Y}_j$ has the property that for each $\mathcal{G}$-orbit of equivalence class $[(F,c)]$ with $c(F)=j$, 
there is a unique representative of $(F,c)$ in the complex $\mathcal{Y}_j$. 
At two extremes, we have $\mathcal{Y}_1= \cup \{W_1/Stab(W_1),\ldots, W_w/Stab(W_w)\}$, since the equivalence
class depends only on the orbit of the walls under the action of $\mathcal{G}$. We
have $\mathcal{Y}_{k+1} = \sqcup\{ [(F,c)]/Stab([F,c]) | c(wall(F))=k+1\}$, with boundary pattern $\partial_i F /Stab([F,c])= (F\cap \mathcal{W}_i^c)/Stab([F,c])$, $1\leq i\leq k$.  

\begin{proof}[proof of Theorem \ref{virtuallyspecial}] 
We will construct a sequence of (usually disconnected) finite cube complexes  $\mathcal{V}_j$, $k+1\geq j \geq 0$, with 
boundary pattern $\{\partial_1(\mathcal{V}_j), \ldots, \partial_{j}(\mathcal{V}_j)\}$ which have the following properties:
\begin{enumerate}
\item \label{map}
there is a locally convex combinatorial immersion $\nu_j: \mathcal{V}_j \to \dot{X}/G=\dot{\mathcal{X}}/\mathcal{G}$ 

\item \label{coloring}
$\mathcal{V}_j$ is glued together from copies of $\mathcal{G}$-orbits of  equivalence classes of  polyhedra $\mathcal{P(X)}\times C_{k+1}(\Gamma(\mathcal{X}))/\simeq$
in such a way that if polyhedra $P, P' \subset \mathcal{V}_j$ share a facet $F$, then the induced equivalence class of $F$
is the same. More formally, there is a decomposition of $\mathcal{V}_j$ into cubical polyhedra 
$\{P_h\}$, such that there is a lift $P_h\to \dot{\mathcal{X}}$ to a polyhedron of $\dot{\mathcal{X}}$ (well-defined
up to the action of $\mathcal{G}$)
which projects to the map $\nu_{j| P_h}: P_h \to \dot{\mathcal{X}}/\mathcal{G}$. 
Moreover, there is a coloring $c_h\in C_{k+1}(V(\Gamma(\mathcal{X})))$, with a well-defined equivalence class associated to the lift $P_h\to \dot{\mathcal{X}}$. 
If $P_g, P_h$ share a facet $F$, so that $F= \partial P_g\cap \partial P_h \subset \mathcal{V}_j$, then there is a lift $P_g\cup_F P_h \to \dot{\mathcal{X}}$ which
projects to the map $\nu_{j |}: P_g\cup_F P_h \to \dot{\mathcal{X}}/\mathcal{G}$.  We want the 
colorings to be compatible, in the sense that $(F, c_g)\simeq (F,c_h)$. Thus, there is a
well-defined map $c_j: \mathcal{F(V}_j) \to [k+1]$. 

\item \label{boundary}
The boundary
of $\mathcal{V}_j$ is the union of all facets $F$ contained in precisely one polyhedron $\partial P_g\subset \mathcal{V}_j$. Moreover,
the boundary pattern $\partial_i \mathcal{V}_j = \cup_{F \in \mathcal{F(V}_j), c_j(F)=i} F$, $1\leq i\leq j$. Thus, a facet $F$ is an  
interior facet  (contained in the boundary of two polyhedra) if and only if $c_j(F)>j$.

\item \label{involution}
The multiplicities of $\mathcal{G}$-orbits of equivalence classes of colored polyhedra making up $\mathcal{V}_j$ satisfy the polyhedral gluing equations. 
In particular, for each equivalence class $[(F,c)]$, $F=\partial P\cap \partial P'$, the number of lifts $P_g\to P$ with coloring $c_g$ which induce equivalent colorings
 $(F,c_g)\simeq (F,c)$ on $F$ is equal to the number of lifts $P_h\to P'$ which induce equivalent colorings $(F,c_h)\simeq (F,c)$. 
\end{enumerate}

The base case $\mathcal{V}_{k+1}$ is the collection of equivalence classes of polyhedra given by the solution to the
polyhedral gluing equations $\Omega$ found in the previous section. Recall we proved the existence of 
$\Omega: \mathcal{P(X)}\times C_{k+1}(\Gamma(\mathcal{X}))/\simeq \to \mathbb{Z}_{\geq 0}$ satisfying the
polyhedral gluing equations. For each equivalence class $[(P_j,c)]$, take $\Omega(P_j,c)$ copies of $P_j$, $1\leq j\leq p$,
keeping track of the coloring $c$ associated to each copy of $P_i$, and take the disjoint union of these to get $\mathcal{V}_{k+1}$. 
Each polyhedron has a locally convex map to $\dot{X}/G$, so condition (\ref{map}) holds. 
These have the empty gluing, each component of $\mathcal{V}_{k+1}$ has a
 lift to $\dot{\mathcal{X}}$ and coloring determined by the polyhedral equivalence class, so condition (\ref{coloring}) holds.
Every facet of $\mathcal{V}_{k+1}$ is a subset of $\partial \mathcal{V}_{k+1}$,
so there are no restrictions on the facets and condition (\ref{boundary}) holds.
Property (\ref{involution}) holds trivially since $\Omega$ is a solution to the polyhedral gluing equations.

Now, suppose we have constructed $\mathcal{V}_j$ with these properties, for $1\leq j\leq k+1$. Let's 
prove the existence of $\mathcal{V}_{j-1}$. The way that we will do this is to prove that $\partial_j\mathcal{V}_j$
covers components of $\mathcal{Y}_j$ with degree zero. By degree
zero, we mean that for each facet of $\mathcal{Y}_j$, the number
of facets of $\partial_j\mathcal{V}_j$ which cover the facet and
have one co-orientation is equal to the  number with the
opposite co-orientation, where the co-orientation points into 
the adjacent polyhedron. 
 Then we will appeal to Theorem \ref{virtualgluing}
to take a cover $\tilde{\mathcal{V}}_j$ of $\mathcal{V}_j$ which may be glued along $\partial_j\tilde{\mathcal{V}}_j$ to form $\mathcal{V}_{j-1}$. We must further
check that it satisfies the inductive hypotheses. 

{\bf Claim:} $\partial_j \mathcal{V}_j$ covers components of $\mathcal{Y}_j$ with degree zero. 

First, note that condition (\ref{map}) implies that each facet $F$ of $\mathcal{V}_j$ is contained in at most
two polyhedra of $\mathcal{V}_j$, because the map $\nu_j:\mathcal{V}_j\to \dot{X}/G$ is locally convex.
In particular, the map is injective on links of vertices lifted to $\dot{\mathcal{X}}$, and therefore is also injective on links of facets lifted to $\dot{\mathcal{X}}$. So
the gluing given in condition (\ref{coloring}) identifies facets of the polyhedra in pairs. As
described in condition (\ref{boundary}), the facets contained in exactly one polyhedron form
the boundary of $\mathcal{V}_j$, and therefore a facet of $\mathcal{V}_j$ which is not in the boundary of $\mathcal{V}_j$ must be
contained in precisely two polyhedra of $\mathcal{V}_j$. Also, because the map $\mathcal{V}_j\to \dot{X}/G$ is
locally convex, the link of each polyhedron vertex of $\mathcal{V}_j$ is the link of a product
of open intervals and half-open intervals. This implies that any path in $\partial_i\mathcal{V}_j$ may be deformed to lie in a sequence
of adjacent facets of $\partial_i\mathcal{V}_j$, meeting in codimension-one facets of $\partial_i\mathcal{V}_j$.   In fact, from the
inductive construction, $\mathcal{V}_j$ will have a hierarchy of length $k+1-j$ that induces such a hierarchy on each boundary
component as well. 

Consider a polyhedral facet $F$ involved in the boundary pattern $\partial_j\mathcal{V}_j$, 
which by hypothesis (\ref{coloring}) has a lift $F\to \dot{\mathcal{X}}$ and an associated equivalence class $[(F,c)]$, 
some $c\in C_{k+1}(\Gamma(\mathcal{X}))$. 
The adjacent polyhedron $\partial P \supset F$ has an equivalence class $[(P,d)]$ that is a polyhedron
of $\mathcal{V}_j$ by property (\ref{coloring}) such that $(F,d)\simeq (F,c)$. 
For a facet $F'$ of $\partial P$ adjacent to $F$ with color $d(F')>j$, there must be an adjacent polyhedron
$P' \subset \mathcal{V}_j$ containing $F'\subset \partial P'$, since this facet cannot occur as part of the boundary pattern of $\mathcal{V}_j$ by condition (\ref{boundary}).
Then  there is a unique facet $F''\subset \partial P'$ meeting $F'$ such that $F'\cap F=F''\cap F'$ and by condition (\ref{coloring}) 
 $F\cup_{F\cap F''} F'' \subset P\cup_{F'} P' \to \dot{\mathcal{X}}$
is a lift of the map $\nu_{j |}$ (from condition (\ref{map})) such that $F\cup F'' \subset wall(F)$ (so $wall(F)=wall(F'')$, see Figure \ref{develop}). Let $[(P',d')]$ be the equivalence class associated to $P'$ (which exists by condition (\ref{coloring}) ). 
Then $(F',d')\simeq (F',d)$ by the condition (\ref{coloring}). We have $(wall(F),wall(F'))\in E(\Gamma(\mathcal{X}))$. Also,
$d(wall(F)) < d(wall(F')), d'(wall(F''))< d'(wall(F'))$, by the inductive hypothesis on $\mathcal{V}_{j}$. 
Since $(F',d)\simeq(F',d')$, and therefore $(wall(F'),d)\simeq (wall(F'),d')$, we have $(wall(F),d) = (wall(F''),d) \simeq (wall(F''),d')$ by one of the 
conditions of the equivalence relation $\simeq$. Also, the lift $F\cup_{F\cap F''} F'' \to wall(F)^d \looparrowright wall(F)$, since
$d(F')>j$. 

\begin{figure}[htb] 
	\begin{center}
	\psfrag{F}{$F$}
	\psfrag{E}{$F'$}
	\psfrag{G}{$F''$}
	\psfrag{P}{$P$}
	\psfrag{Q}{$P'$}
	\psfrag{W}{$\mathcal{V}_j$}
	\psfrag{V}{$\partial_j\mathcal{V}_j$}
	\epsfig{file=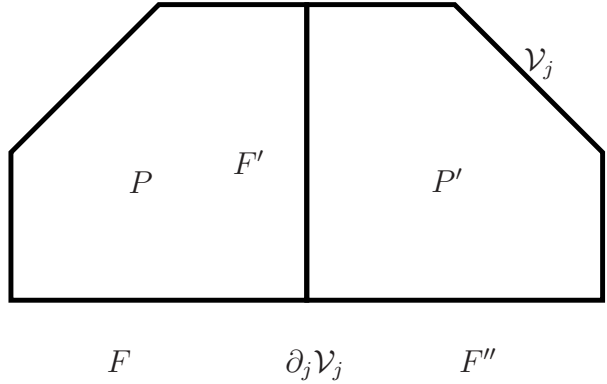, height=2in}
	\caption{\label{develop} Developing $\partial_j\mathcal{V}_j$ }
	\end{center}
\end{figure} 

Take a path  $\alpha:I\to \partial_j \mathcal{V}_j$ starting at $F$, and going through a sequence of facets $F=F_0, F_1,
F_2, \ldots, F_m$, such that $F_i$ is associated to a coloring $d_i$. We may assume each of these facets intersects its neighbors in codimension-one facets of $\partial_j\mathcal{V}_j$,
by the observation above. 
We see that once we choose a lift $F\to wall(F)^d\subset \mathcal{X}$, we get 
a lift $\tilde{\alpha}:I\to wall(F)^d$, and corresponding lifts $F_i\to wall(F)^d$. Moreover, $(wall(F),d_0)\simeq (wall(F),d_i)$. 
If $\alpha$ is a closed path so that $F_k=F$, then the lift $F_k\to wall(F)^d$ induces an equivalent coloring of $wall(F)$.
Thus, we see that the lift $F\to wall(F)^d$ is well-defined up to the action of $Stab(wall(F)^d)$, so we get a well-defined
lift of the component $Z$ of $\partial_j\mathcal{V}_j$ containing $F$ to a map $Z\to wall(F)^d/Stab(wall(F)^d)=wall(F)^d_{\mathcal{G}}$.

Conversely, if a facet $F' \subset \partial P$ adjacent to $F$ is colored $d(wall(F'))=i < j=d(wall(F))$, then $F'$ must be part of the
boundary pattern $\partial_i\mathcal{V}_j$ by condition (\ref{boundary}). Then $F'\cap F\subset \partial_i (\partial_j\mathcal{V}_j)$.
Thus, we have a  map $\pi: Z \to wall(Z)^c_{\mathcal{G}}$ which is a covering projection onto the component of its image.
The condition (\ref{involution}) ensures that the map $\partial_j\mathcal{V}_j\to \mathcal{Y}_j$ is degree zero, since for 
each facet equivalence class $[(F,c)]$ with $c(F)=j$, there is a unique representative of the $\mathcal{G}$-orbit of  $(F,c)$ in the complex
$\mathcal{Y}_j$. Thus, the number of representatives of $[(F,c)]$ in $\partial_j\mathcal{V}_j$ with one co-orientation will cancel with
the other co-orientation by the gluing equation for the class $[(F,c)]$. This finishes the proof of the claim that $\partial_j \mathcal{V}_j$ covers components of $\mathcal{Y}_j$ with degree zero.

Next, we need to show that $\partial_j\mathcal{V}_j$ is acylindrical in $\mathcal{V}_j$ in order to apply Theorem \ref{virtualgluing}. 
Suppose that there is an essential cylinder $(S^1\times [0,1], S^1\times\{0,1\})\to (\mathcal{V}_j, \partial_j\mathcal{V}_j)$. We may assume that
for each $z\in S^1$, $(z\times[0,1], z\times\{0,1\}) \to (\mathcal{V}_j, \partial_j\mathcal{V}_j)$ is a minimal length geodesic between the components
of $\partial_j\mathcal{V}_j$. 
There are elevations of each component of $\partial_j\mathcal{V}_j \to \dot{\mathcal{X}}$ which map to a locally convex immersion to a
wall  $\dot{W}\subset \dot{\mathcal{X}}$. We may therefore choose a compatible elevation  $(S^1\times [0,1],S^1\times\{0\}, S^1\times\{1\})\to (\dot{\mathcal{X}}, \dot{Y}_0, \dot{Y}_q)$, where $Y_0, Y_q\in V(\Gamma(\mathcal{X}))$, 
which must also be an essential cylinder. 
Therefore the walls $Y_0, Y_q\subset \mathcal{X}$ must be distance $\leq R$, and therefore $(Y_0, Y_q)\in E(\Gamma(\mathcal{X}))$, so
$Y_0$ and $Y_q$ must have
distinct colors in any coloring $c\in C_{k+1}(\Gamma(\mathcal{X}))$, so $c(Y_0)\neq c(Y_q)$. 
However, because there is a cylinder between the walls $Y_0, Y_q$, there must be a
sequence of walls $Y_0, Y_1, \ldots, Y_q$ such that the geodesic $z\times [0,1]$ intersects this
sequence of walls for some generic $z$. There will also be a sequence of facets $F_0, F_1, \ldots, F_q$,
$F_i\subset Y_i$ that the geodesic meets, and sequence of polyhedra $P_1, \ldots, P_q$,
with $F_{i-1}\cup F_i \subset \partial P_i, i=1,\ldots, q$.  
Associated to each $P_i$ is an equivalence class of colorings $[(P_i, d_i)]$, and since the facets 
$F_1, \ldots, F_{q-1}$ are interior to $\mathcal{V}_j$, we must have $(F_i, d_{i-1})\simeq (F_i, d_i)$.
In particular, $d_{i-1}(Y_0)=d_i(Y_0), d_{i-1}(Y_q)=d_i(Y_q)$, $i=1, \ldots, q$. 
But then $d_0(Y_0)=j, d_0(Y_q)=d_q(Y_q)=j$, which contradicts
the fact that $d_0(Y_0)\neq d_0(Y_q)$ since $(Y_0,Y_q)\in E(\Gamma(\mathcal{X}))$. 
Thus, we conclude that the cylinder does not exist, and therefore $\partial_j\mathcal{V}_j$ is acylindrical in $\mathcal{V}_j$.

To recap, we have an acylindrical subcomplex $\partial_j\mathcal{V}_j\subset \mathcal{V}_j$. Moreover, the components $Z$ of $\partial_j\mathcal{V}_j$
are partitioned into equivalence classes determined by the equivalence relation of the
equivalence class of $wall(Z)$ together with coloring. Each component covers a component of $wall(Z)^c_{\mathcal{G}}$ for
some $\simeq$ equivalence class $[(wall(Z),c)]$. Thus, there is a union of components  $Z_j\subseteq \mathcal{Y}_j$
such that there is a cover $\partial_j\mathcal{V}_j \to Z_j$. Moreover, the cover is degree $0$ with respect to the 
co-orientation. We split $\partial_j\mathcal{V}_j = \partial_j\mathcal{V}_j^{\uparrow} \sqcup \partial_j\mathcal{V}_j^{\downarrow} \sqcup \partial_j\mathcal{V}_j^{\circ}$, determined on 
each component by whether the cover of the corresponding component of $Z_j$ preserves
or reverses co-orientation, unless $Stab(wall(Z)^c)$ exchanges the sides of $wall(Z)$, in which case
we may ignore the orientation and it lies in $\partial_j\mathcal{V}_j^{\circ}$.

By Theorem \ref{virtualgluing}, there is a regular covering space $\tilde{\mathcal{V}}_j \to \mathcal{V}_j$, with
boundary pattern $\{ \partial_1\tilde{\mathcal{V}}_j, \ldots, \partial_j\tilde{\mathcal{V}}_j\}$ given by the preimages of $\partial_i\mathcal{V}_j$, 
such that the induced covering space $\partial_j\tilde{\mathcal{V}}_j \to Z_j$ is regular. Since the 
degree of the cover is zero, we must have that the covers $\partial_j\tilde{\mathcal{V}}_j^{\uparrow}\to Z_j$ and 
$\partial_j\tilde{\mathcal{V}}_j^{\downarrow}\to Z_j$
are common covers. After gluing the co-oriented components of $\partial_j\tilde{\mathcal{V}}_j$, we may take two copies of the
resulting complex, and glue the non-co-oriented components $\partial_j\tilde{\mathcal{V}}_j^{\circ}$ by co-orientation reversing isometries which exchange the sides in pairs (we'll rename the 2-fold cover $\tilde{\mathcal{V}}_j$
for simplicity). 
Thus, there is an isometric involution $\tau_j: \partial_j\tilde{\mathcal{V}}_j \leftrightarrow \partial_j\tilde{\mathcal{V}}_j$. 
We may form the quotient space $\mathcal{V}_{j-1}= \tilde{\mathcal{V}}_j / \tau_j$ by gluing the boundary pattern by $\tau_j$. We need to check that
the inductive hypotheses are satisfied for $\mathcal{V}_{j-1}$. 

Since the involution $\tau_j$ reverses co-orientation, we can see that the combinatorial immersion 
$\tilde{\mathcal{V}}_j \to \mathcal{V}_j \to \mathcal{\dot{X}/G}$ extends to an
immersion $\mathcal{V}_{j-1}\to \mathcal{\dot{X}/G}$. Moreover, since $\tau_j$ is
an involution of the boundary pattern, we see that $\mathcal{V}_{j-1}$ has locally
convex boundary since $\partial_j\tilde{\mathcal{V}}_j \subset \tilde{\mathcal{V}}_j$ has a collar neighborhood, 
and therefore the map to $\mathcal{\dot{X}/G}$ is locally convex, so condition (\ref{map}) is satisfied. 
The boundary pattern $\partial_i(\partial_j\mathcal{V}_j)$ is preserved by the involution $\tau_j$, since the coloring of the boundary
pattern is locally determined by the equivalence classes of walls being glued together for colors $i <j$. 
We define the boundary pattern of $\mathcal{V}_{j-1}$ by $\partial_i\mathcal{V}_{j-1}= \partial_i\tilde{\mathcal{V}}_j/\tau_{j|}$.
The interior facets will all have color $>j-1$, and the boundary facets will have color $\leq j-1$. So condition (\ref{boundary})
is satisfied. 
Since we have glued $\mathcal{V}_{j-1}$ out of copies of colored polyhedra in a way consistent with the gluing 
equations, and taking regular covers preserves the gluing equations, conditions (\ref{coloring}) and (\ref{involution})
are satisfied.

So all of the inductive hypotheses are satisfied. 

The complex $\mathcal{V}_0$ has trivial boundary pattern, and a locally convex map $\mathcal{V}_0\to \dot{X}/G$.
Therefore, this map is a finite-sheeted covering space. Moreover, by construction, $\mathcal{V}_0$ has
a quasi-convex hierarchy, so $\pi_1(\mathcal{V}_0)\in \mathcal{QVH}$ (in fact, the hierarchy is malnormal, so 
$\pi_1(\mathcal{V}_0)\in \mathcal{MQH}$). By \cite[Theorem 13.3 or Theorem 11.2]{Wise11} (see also Theorem \ref{QVH}),
$\mathcal{V}_0$ has a finite-sheeted special cover, and thus $X/G$ does. This finishes the proof of Theorem \ref{virtuallyspecial}. 
\end{proof}

\section{Conclusion}
Recall that a Haken 3-manifold is a compact irreducible orientable 3-manifold 
containing an embedded $\pi_1$-injective surface. 

\begin{theorem}[Virtual Haken conjecture \cite{Waldhausen}] \label{virtual Haken}
Let $M$ be a closed aspherical 3-manifold. Then there is a finite-sheeted
cover $\tilde{M}\to M$ such that $\tilde{M}$ is Haken.
\end{theorem}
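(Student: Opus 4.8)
The plan is to reduce the statement to the hyperbolic case and then to apply Theorem \ref{virtuallyspecial} together with the cubulation results of Bergeron--Wise and Kahn--Markovic. First I would use geometrization (Perelman) to decompose $M$: a closed aspherical 3-manifold admits a geometric decomposition along incompressible tori, so $M$ is either geometric or properly contains an incompressible torus. If $M$ already contains an incompressible torus (in particular if the JSJ decomposition is nontrivial, or if $M$ carries one of the non-hyperbolic aspherical geometries $\mathbb{E}^3$, $\mathrm{Nil}$, $\mathrm{Sol}$, $\mathbb{H}^2\times\mathbb{R}$, $\widetilde{\mathrm{SL}_2}$), then $M$ is already Haken, or a finite cover is Haken by the classical observation that Seifert-fibered and Sol manifolds are virtually Haken (pass to a cover with positive first Betti number using the fact that these have virtually infinite abelianization). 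So the only remaining case is $M$ closed hyperbolic.

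For $M$ closed hyperbolic, the key input is that $\pi_1(M)$ acts properly and cocompactly on a CAT(0) cube complex. This is the theorem of Bergeron--Wise, building on the surface subgroup theorem of Kahn--Markovic: the abundance of closed quasi-fuchsian surfaces in $M$ provides enough quasi-convex codimension-one subgroups that Sageev's construction yields a proper cocompact action of $\pi_1(M)$ on a CAT(0) cube complex $X$. Since $\pi_1(M)$ is word-hyperbolic, Theorem \ref{virtuallyspecial} applies: there is a finite-index subgroup $F\le \pi_1(M)$ acting specially on $X$. In particular $F$ is the fundamental group of a compact special cube complex with hyperbolic fundamental group, and by Corollary \ref{lerf} quasi-convex subgroups of $\pi_1(M)$ are separable.

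Now I would extract the embedded surface. Each quasi-fuchsian surface subgroup $\Sigma\hookrightarrow \pi_1(M)$ is quasi-convex, hence separable in $\pi_1(M)$; by the standard separability-implies-virtual-embedding argument (Scott's criterion), there is a finite-sheeted cover $\tilde M\to M$ in which the immersed quasi-fuchsian surface lifts to an embedded (two-sided, $\pi_1$-injective, non-peripheral since closed) surface. A closed hyperbolic 3-manifold is irreducible and orientable up to a double cover, so $\tilde M$ (possibly after passing to the orientation double cover) is Haken. This completes the hyperbolic case and hence the theorem.

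The main obstacle is genuinely the cubulation step, i.e. producing the proper cocompact CAT(0) cube complex action for a closed hyperbolic $3$-manifold group; this rests on Kahn--Markovic's surface subgroup theorem and the Bergeron--Wise criterion for when Sageev's dual cube complex is cocompact, and it is the one place where genuinely $3$-manifold-specific (rather than purely group-theoretic) geometry enters. Everything after that is formal: Theorem \ref{virtuallyspecial} upgrades the action to a special one, Corollary \ref{lerf} gives separability of quasi-convex subgroups, and separability plus Scott's argument turns an immersed incompressible surface into an embedded one in a finite cover. I would also be careful to handle the easy non-hyperbolic aspherical geometries and the nontrivial-JSJ case at the outset, since the statement is about all closed aspherical $3$-manifolds, not just hyperbolic ones.
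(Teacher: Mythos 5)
Your proposal follows essentially the same route as the paper: geometrization reduces to the hyperbolic case, Bergeron--Wise/Kahn--Markovic gives a proper cocompact CAT(0)-cube-complex action, Theorem \ref{virtuallyspecial} makes $\pi_1(M)$ virtually special, and Corollary \ref{lerf} (LERF) combined with Scott's separability criterion produces an embedded incompressible surface in a finite cover. You simply spell out the reduction to the hyperbolic case and the ``separable implies virtually embedded'' step more explicitly than the paper, which states them as well known.
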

\begin{theorem}[Virtual fibering conjecture, Question 18 \cite{Th:82}] \label{virtual fiber}
Let $M$ be a closed hyperbolic 3-manifold. Then there is a finite-sheeted
cover $\tilde{M}\to M$ such that $\tilde{M}$ fibers over the circle. Moreover,
$\pi_1(M)$ is LERF and large.
\end{theorem}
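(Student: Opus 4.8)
The plan is to reduce Theorem \ref{virtual fiber} (and, along the way, Theorem \ref{virtual Haken}) to Theorem \ref{virtuallyspecial} and Corollary \ref{lerf}, by way of the Kahn--Markovic surface subgroup theorem, Sageev's cubulation machinery (in the form used by Bergeron--Wise), and Agol's RFRS virtual fibering criterion. First I would cubulate: since $M$ is closed hyperbolic, $G=\pi_1(M)$ is word-hyperbolic, and Kahn--Markovic produce a rich family of immersed closed quasi-fuchsian surfaces in $M$; feeding the limit circles of the corresponding surface subgroups into Sageev's construction, Bergeron--Wise show that $G$ acts properly and cocompactly on a CAT(0) cube complex $X$. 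Thus $(G,X)$ is a cubulated hyperbolic group, so Theorem \ref{virtuallyspecial} applies: $G$ has a finite-index subgroup $F$ acting specially on $X$, whence $F$ is the fundamental group of a compact special cube complex and therefore (Haglund--Wise) embeds in a right-angled Artin group. Corollary \ref{lerf} gives at once that $G$ is linear, large, and that its quasi-convex subgroups are separable.

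Next I would deduce virtual fibering. Right-angled Artin groups are RFRS and the RFRS property passes to subgroups, so $F$, and hence $G$ virtually, is RFRS. Using largeness (Corollary \ref{lerf}), pass to a further finite-index subgroup of positive first Betti number, i.e.\ a finite cover $M'\to M$ with $\pi_1(M')$ RFRS and $b_1(M')>0$. Since $M'$ is compact, orientable, irreducible with infinite fundamental group, Agol's virtual fibering criterion produces a finite cover $\tilde M\to M'\to M$ that fibers over $S^1$: one climbs the RFRS tower until, in an appropriate cover, some nonzero integral class in $H^1$ lies in the cone over a fibered face of the Thurston norm ball, via a sutured-manifold hierarchy argument. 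This yields the first assertion of Theorem \ref{virtual fiber}, and simultaneously proves Theorem \ref{virtual Haken} in the hyperbolic case since a fibered cover is Haken.

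It remains to handle the other cases of Theorem \ref{virtual Haken} and the LERF/largeness claims. For Theorem \ref{virtual Haken}, geometrization reduces to the geometric case: a closed aspherical $3$-manifold with nontrivial geometric decomposition already contains an incompressible torus and so is Haken, and every non-hyperbolic closed aspherical geometric $3$-manifold (Seifert fibered with infinite $\pi_1$, or modeled on $\mathrm{Sol}$, $\mathrm{Nil}$, $\widetilde{\mathrm{SL}}_2$, $\mathbb{E}^3$) is virtually Haken by classical results; the hyperbolic case is covered above, or directly by separability of a Kahn--Markovic quasi-convex surface subgroup, which then lifts to an embedded $\pi_1$-injective surface in a finite cover by Scott's lemma. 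For LERF, let $H\le G$ be finitely generated; by the tameness theorem together with Canary's covering theorem, $H$ is either geometrically finite --- hence quasi-convex in $G$, hence quasi-convex in $X$, hence separable by Corollary \ref{lerf} --- or a virtual fiber subgroup, which is normal up to finite index with virtually cyclic quotient and therefore closed in the profinite topology. So every finitely generated subgroup of $G$ is separable, and largeness of $G$ is part of Corollary \ref{lerf}.

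The main obstacle is of course Theorem \ref{virtuallyspecial} itself, which we are assuming here; granting it, the remaining delicate points are checking that the surfaces supplied by Kahn--Markovic are \emph{generic} enough for Sageev's construction to yield a \emph{cocompact} cubulation (the Bergeron--Wise input), verifying that quasi-convexity of a subgroup in the cube complex $X$ coincides with geometric finiteness of the corresponding Kleinian subgroup, and confirming that Agol's fibering criterion still applies after descending from the special finite-index subgroup to one with positive $b_1$. Each of these is by now standard, but each requires invoking the precise statements above.
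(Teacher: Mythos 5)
Your proposal follows essentially the same path as the paper: cubulate $\pi_1(M)$ via Kahn--Markovic surfaces and Bergeron--Wise/Sageev, invoke Theorem~\ref{virtuallyspecial} and Corollary~\ref{lerf} to get virtual specialness, separability of quasi-convex subgroups, and largeness, and then deduce virtual fibering. The only difference is that you unpack two steps the paper treats tersely: for fibering the paper simply cites \cite[Corollary 14.3]{Wise11}, whereas you explicitly route through right-angled Artin groups being RFRS and Agol's RFRS fibering criterion (which is exactly what that citation does internally); and for LERF the paper just points to Corollary~\ref{lerf}, which only yields separability of quasi-convex subgroups, whereas you correctly supply the standard bridge --- tameness plus the covering theorem splitting finitely generated subgroups into geometrically finite (hence quasi-convex and separable) and virtual-fiber (hence virtually normal and separable) cases. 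Both elaborations are accurate and are what the paper is implicitly relying on.
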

\begin{proof}[Proof of Theorems \ref{virtual Haken} and \ref{virtual fiber}]
From the geometrization theorem \cite{Per02, Per03, MorganTian08}, it is well-known that the virtual Haken
conjecture reduces to the case that $M$ is a closed hyperbolic 3-manifold. 
For a closed hyperbolic 3-manifold, we have the following result of Bergeron-Wise based on work of Kahn-Markovic \cite{KM09} (and
making use of seminal results of Sageev on cubulating groups containing codimension-one subgroups \cite{Sageev95}). 
\begin{theorem}\cite[Theorem 5.3]{BergeronWise09} \label{M cubulated}
Let $M$ be a closed hyperbolic $3$-manifold. Then $\pi_1M$ acts freely and
cocompactly on a CAT(0) cube complex.
\end{theorem}
Now, by Theorem \ref{virtuallyspecial}, $\pi_1(M)$ is virtually special. This 
implies that $\pi_1 M$ is LERF and large following from the virtual specialness by Cor. \ref{lerf}. 
Therefore $M$ is virtually Haken, and in fact $M$ is also virtually fibered by \cite[Corollary 14.3]{Wise11}.
\end{proof}
We also have the following corollary, resolving a question of Thurston.
\begin{corollary}[Question 15 \cite{Th:82}] 
Kleinian groups are LERF.
\end{corollary}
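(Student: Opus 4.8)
The plan is to reduce the statement about arbitrary Kleinian groups to the cocompact hyperbolic case already handled by Theorem \ref{virtual fiber}, via standard covering-space and group-theoretic reductions. Recall that a Kleinian group $\Gamma$ is a discrete subgroup of $\mathrm{PSL}_2(\CC)=\Isom^+(\HH^3)$; LERF (locally extended residually finite) means every finitely generated subgroup is separable, i.e.\ an intersection of finite-index subgroups. First I would dispose of the elementary cases: if $\Gamma$ is virtually abelian it is polycyclic, hence LERF by a theorem of Mal'cev, so we may assume $\Gamma$ is non-elementary. Next, since separability is inherited by subgroups (if $H\le K\le G$ and $K$ is LERF then $H$ is separable in $K$, and separability in $K$ together with $K$ separable in $G$ gives $H$ separable in $G$ — but here we want the stronger statement that subgroups of LERF groups are LERF, which is immediate from the definition), it suffices to prove the result for finitely generated $\Gamma$; indeed every finitely generated subgroup of a Kleinian group is again Kleinian, and LERF is a statement about finitely generated subgroups, so we may assume $\Gamma$ is finitely generated.

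Now I would invoke the tameness and geometrization machinery. By the Tameness Theorem of Agol and Calegari--Gabai, together with Thurston's geometrization for Haken manifolds and Perelman's geometrization in general, a finitely generated Kleinian group $\Gamma$ is the fundamental group of a compact 3-manifold (possibly with boundary) whose interior carries a complete hyperbolic structure. Passing to the orientable double cover if necessary and then to the geometric decomposition, the separability of a finitely generated subgroup of $\Gamma$ reduces, via the subgroup-separability behavior under amalgamated products and HNN extensions over $\ZZ^2$ subtended by tori (here one uses that Kleinian groups with $\ZZ^2$ cusps behave well, cf.\ work of Long--Niblo on peripheral separability), to the case of a closed or finite-volume cusped hyperbolic 3-manifold group. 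For the cusped finite-volume case, Dehn filling arguments in the style of Theorem \ref{filling} (the appendix result generalizing \cite{AGM08}) combined with \cite{BergeronWise09} and Theorem \ref{virtuallyspecial} show that $\pi_1$ of a finite-volume hyperbolic 3-manifold is virtually special, hence LERF by Corollary \ref{lerf}; for the closed case this is exactly Theorem \ref{virtual fiber}. Thus every finitely generated Kleinian group is virtually special, hence LERF, and therefore every Kleinian group is LERF.

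The main obstacle I expect is the geometrically infinite / non-cocompact part of the reduction: a priori a finitely generated Kleinian group need not act cocompactly on $\HH^3$ nor cocompactly on any CAT(0) cube complex, so Theorem \ref{virtuallyspecial} does not apply directly. The resolution is precisely the Tameness Theorem, which guarantees the manifold is topologically compact, together with the cubulation results for cusped hyperbolic manifolds (the relatively hyperbolic version of \cite{BergeronWise09} plus relatively hyperbolic Dehn filling as in the appendix) to put $\pi_1$ of the compact core into $\mathcal{QVH}$ and apply Theorem \ref{QVH}. Once one knows the compact-core group is virtually special and LERF, the finitely generated subgroup one started with is separable there, and hence separable in $\Gamma$ since it sits inside the compact-core group as a finite-index-free copy of $\Gamma$ itself. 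The remaining bookkeeping — handling orientation-reversing elements, reducible and Seifert-fibered pieces of the geometric decomposition, and the torus boundary gluings — is routine once the hyperbolic pieces are under control.
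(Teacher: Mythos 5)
The paper's proof is a three-line reduction that you've replaced with a more elaborate and, in places, flawed argument. Here is the actual structure of the paper's proof: closed hyperbolic $3$-manifold groups are LERF (Theorem \ref{virtual fiber}); finite-covolume Kleinian groups are then LERF by \cite[Proposition 5.3]{MM-P}, which handles the cusped-to-closed reduction; every Kleinian group embeds in a finite-covolume Kleinian group \cite{Mo}; and LERF is inherited by subgroups. That's it. The embedding step is the crucial move that your proposal misses, and it makes the geometrically infinite and non-finitely-generated cases disappear entirely.

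Your ``reduction to finitely generated $\Gamma$'' has a genuine logical gap. You correctly observe that subgroups of LERF groups are LERF, but that implication runs the wrong way for your reduction. Suppose you knew every finitely generated Kleinian group is LERF; to conclude an arbitrary Kleinian $\Gamma$ is LERF you must separate a finitely generated $H\le\Gamma$ from $g\in\Gamma\setminus H$ by a \emph{finite-index subgroup of $\Gamma$}. Setting $K=\langle H,g\rangle$ and separating $H$ from $g$ inside $K$ gives a finite-index subgroup of $K$, but there is no mechanism for promoting that to a finite-index subgroup of $\Gamma$ unless you already know $K$ is separable in $\Gamma$ -- which is circular. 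What \emph{does} work is the paper's route: embed $\Gamma$ into a finite-covolume Kleinian group $\Gamma'$, prove $\Gamma'$ is LERF, and then use the downward inheritance of LERF. This is strictly cleaner than any reduction internal to $\Gamma$.

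There is a second gap in your treatment of the finitely generated case. You invoke Tameness to get a compact core and then appeal to ``cubulation of cusped manifolds'' plus relatively hyperbolic Dehn filling, but Theorem \ref{M cubulated} is stated only for closed hyperbolic $3$-manifolds, and more importantly a finitely generated geometrically \emph{infinite} Kleinian group (for instance a degenerate surface group) has a convex core of infinite volume and does not act cocompactly on any CAT(0) cube complex; Theorem \ref{virtuallyspecial} simply does not apply to it directly. Handling that case honestly along your lines would require an additional appeal to the Covering Theorem to recognize such groups as virtual fibers of closed manifolds. The paper's use of \cite{Mo} to pass to a finite-covolume overgroup and then \cite[Proposition 5.3]{MM-P} for the cusped case absorbs all of these complications in two citations. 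Your plan can likely be carried out with substantially more work, but as written the argument has holes precisely where the paper's citations are doing real work.
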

\begin{proof}
This follows combining \ref{virtual fiber} which proves that compact
hyperbolic 3-manifold groups are LERF, together with the implication
that therefore all finite-covolume Kleinian groups are LERF by \cite[Proposition 5.3]{MM-P}. 
It is well known that any Kleinian group embeds in a finite covolume Kleinian
group \cite{Mo}. 
\end{proof}

\appendix

\section{Filling virtually special subgroups}
\centerline{\textsc{by Ian Agol, Daniel Groves, and Jason Manning}}

This section will be devoted to proving the following theorem, which may be regarded
as a generalization of the main theorem of \cite{AGM08}, with the extra ingredient of the malnormal
virtually special quotient Theorem \ref{malnormal} \cite[Theorem 12.3]{Wise11}. 

\begin{theorem} \label{filling}
Let $G$ be a hyperbolic group, let $H \leq G$ be a quasi-convex virtually special subgroup. 
For any $g\in G-H$, there is a hyperbolic group $\mathcal{G}$ and a homomorphism
$\phi: G\to \mathcal{G}$ such that $\phi(g)\notin \phi(H)$ and $\phi(H)$ is finite. 
\end{theorem}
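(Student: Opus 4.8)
The plan is to reduce Theorem~\ref{filling} to the relatively hyperbolic Dehn filling machinery of \cite{AGM08}, using the malnormal virtually special quotient theorem of Wise (Theorem~\ref{malnormal} \cite[Theorem 12.3]{Wise11}) to manufacture a finite-index subgroup with good separation properties and to control the behaviour of $H$ inside the resulting peripheral structure. First I would pass to a finite-index subgroup: since $H\leq G$ is quasi-convex and virtually special, I would produce a finite-index subgroup $G_0\leq G$ together with a finite collection of quasi-convex subgroups that is \emph{almost malnormal} in $G_0$ and contains (a conjugate of) $H\cap G_0$ as one of its members, arranging simultaneously that $g\notin G_0$ is handled by passing instead to a normal core so that the double coset $HgH$ survives. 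Concretely, one uses that a quasi-convex subgroup of a hyperbolic group is contained in a maximal quasi-convex subgroup of finite width, and Wise's theorem lets us realize the relevant quasi-convex packing data via a virtually special quotient; this is where the hypothesis that $H$ is virtually special (rather than merely quasi-convex) is used essentially.

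The second step is to set up the relatively hyperbolic structure. Having an almost malnormal quasi-convex collection $\mathcal{P}=\{P_1,\dots,P_n\}$ in $G_0$ with $H$ conjugate into $P_1$, I would invoke the fact (Bowditch, or \cite[Theorem 7.11]{Bowditch12}) that $(G_0,\mathcal{P})$ is relatively hyperbolic. The point of the malnormal virtually special quotient theorem is that it provides, for a suitable choice of filling kernels $N_i\trianglelefteq P_i$ of finite index, a quotient $\bar G_0 = G_0/\langle\langle N_1,\dots,N_n\rangle\rangle$ which is again hyperbolic (in fact virtually special) and in which the images $\bar P_i$ are finite. This is exactly a Dehn filling of $(G_0,\mathcal{P})$ along deep enough finite-index subgroups, so the Dehn filling theorem of \cite[Theorem 1.1]{AGM08} (or its generalization asserted here) guarantees that for all sufficiently ``deep'' choices of the $N_i$ the filling map $\phi_0:G_0\to\bar G_0$ is injective on any prescribed finite set — in particular we can demand that $\phi_0$ does not kill the nontrivial element represented by $g$-translates, and that $\phi_0(H)$ is finite because $\phi_0(P_1)$ is finite and $H$ is conjugate into $P_1$.

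The third step is to remove the passage to $G_0$ and produce the statement for $G$ itself. Here I would take $\phi$ to be the composition $G\to G/K$ where $K = \bigcap_{x\in G} x(\ker\phi_0)x^{-1}$ is the normal core in $G$ of the kernel of a map defined a priori only on $G_0$; equivalently, one works from the start with $G_0\trianglelefteq G$ normal of finite index (replace $G_0$ by its normal core), so $\ker\phi_0$ is already normal in $G$ after intersecting over the finitely many cosets, and $G/\ker(\text{core})$ is virtually $\bar G_0$ hence hyperbolic. One must check $\phi(g)\notin\phi(H)$: since $g\notin H$, the coset $gH$ is disjoint from $H$, and by quasi-convexity of $H$ the double coset condition $g\notin H$ can be certified by avoiding a finite ball, so a deep enough filling (applied to the finitely many cosets) separates $g$ from $H$; this is the standard ``separating a double coset'' refinement of Dehn filling, and $\phi(H)$ is finite because it is a quotient of the finite group $\phi_0(H)$ up to finite index from the core construction.

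\textbf{Main obstacle.} The hard part will be the first step: arranging that $H$ sits inside an almost malnormal quasi-convex peripheral collection after passing to a finite-index subgroup, i.e. upgrading ``quasi-convex'' to ``malnormal quasi-convex'' via a finite-index subgroup, while keeping control over the image of $g$. This is precisely what the malnormal virtually special quotient theorem (Theorem~\ref{malnormal}) is designed to supply, and the technical heart of the appendix will be verifying its hypotheses for $H\leq G$ — that $H$ is virtually special and quasi-convex is exactly the input Wise's theorem needs to produce a virtually special quotient in which the relevant quasi-convex subgroups become malnormal, so that Dehn filling applies. Once the relatively hyperbolic structure is in place, the rest is a direct application of \cite{AGM08}-style filling together with the routine normal-core bookkeeping to descend from $G_0$ to $G$.
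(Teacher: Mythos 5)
Your proposal has a genuine and fatal gap in the first step, and the gap reflects a misunderstanding of what the malnormal virtually special quotient theorem can do.

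You propose to pass to a finite-index subgroup $G_0 \leq G$ in which (a conjugate of) $H \cap G_0$ becomes a member of an \emph{almost malnormal} quasi-convex collection, so that $H$ can be made a peripheral subgroup of a relatively hyperbolic structure and then filled to a finite group in one step. This is impossible whenever $H$ has height $\geq 2$ in $G$. Height is essentially invariant under passage to finite-index subgroups: if $H \cap H^{g}$ is infinite for some $g \notin H$, then the corresponding intersection of conjugates of $H \cap G_0$ remains infinite in $G_0$, so $H\cap G_0$ is not almost malnormal in $G_0$. There is no finite-index trick that upgrades a quasi-convex subgroup to an almost malnormal one — if there were, separability questions for quasi-convex subgroups would collapse to the malnormal case. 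Moreover Theorem~\ref{malnormal} does not do what you ascribe to it: it takes an \emph{already} almost malnormal quasi-convex collection as input and produces virtually special quotients; it does not render a given quasi-convex subgroup malnormal. Your invocation of ``maximal quasi-convex subgroup of finite width'' also conflates two notions: \emph{every} quasi-convex subgroup of a hyperbolic group has finite width (\cite{gmrs}), but finite width is much weaker than almost malnormality (which is height $\leq 1$).

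The paper's actual argument avoids this trap entirely by not attempting to make $H$ a peripheral subgroup. Instead it forms the peripheral structure $\mathcal{P}$ from the $G$-commensurators of the \emph{minimal infinite intersections} $H \cap H^{g_2} \cap \cdots \cap H^{g_j}$ of essentially distinct conjugates of $H$ (Definition~\ref{peripheral}), and the corresponding ``malnormal core'' $\mathcal{D}$ of $H$. Theorem~\ref{malnormal} is applied to $H$ with the almost malnormal collection $\mathcal{D}$ (not to $G$, and not to make $H$ malnormal) to ensure the \emph{induced} filling of $H$ is again virtually special. A sufficiently long $H$-filling of $(G,\mathcal{P})$ along finite-index kernels then strictly reduces the \emph{height} of $H$ (Theorem~\ref{heightdecreases}), while keeping $\bar{G}$ hyperbolic, $\bar{H}$ quasi-convex and virtually special, and $\phi(g) \notin \phi(H)$ (Theorem~\ref{oldstuff}). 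One then inducts on height, with height~$0$ as the base case. This inductive structure — reducing height rather than eliminating $H$ in one shot — is the key idea, and it is entirely absent from your proposal. The virtual specialness of $H$ is used to run the MSQT on $H$ itself (so the filled $H$ stays virtually special for the induction) and to obtain residual finiteness of the $P_i$ (so that deep filling kernels exist); it is not used to ``realize quasi-convex packing data'' as you suggest.
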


\begin{remark}
The conclusion of this theorem may be regarded as a weak version of 
subgroup separability. Under the hypotheses of the theorem, $H$ is subgroup separable in $G$
if one may also assume that the quotient group $\mathcal{G}$ is finite. 
\end{remark}

\begin{remark}
It ought to be possible to prove this result using the techniques to prove
\cite[Theorem 12.1]{Wise11}. However, we have decided to provide an alternative argument
which gives a geometric perspective on the notion of height, and uses hyperbolic Dehn filling
arguments from the literature instead of the small-cancellation theory
developed in \cite{Wise11}. 
\end{remark}

\begin{notation}
In this appendix, we will sometimes use the notation $A \dot{\leq} B$ to indicate
that $A$ is a finite-index subgroup of $B$.
\end{notation}

\begin{definition} \label{peripheral}
We define the malnormal core of $H$ and peripheral system induced by
$H$ on $G$.  Let $n$ be the height of $H$ in $G$. 
By \cite[Corollary 3.5]{AGM08}, there are finitely many $H$-conjugacy classes 
of minimal infinite subgroups of the form $H\cap H^{g_2} \cap H^{g_3}\cap \cdots\cap H^{g_j}$,
where $1\leq j\leq n$ and $\{g_1=1,g_2,\ldots, g_n\}$ are essentially
distinct, in the sense that $g_iH=g_jH$ if and only if $i=j$.

Choose one $H$-conjugacy class of each such subgroup in $H$, and replace it with its
commensurator in $H$ to obtain a collection of quasi-convex subgroups
$\mathcal{D}_0$ of $H$.
Eliminating
redundant entries which are $H$-conjugate, we obtain a collection $\mc{D}$,
which we will call the {\it malnormal core} of $H$ in $G$.   
The collection $\mathcal{D}$ gives
rise to a peripheral system of subgroups $\mathcal{P}$ in $G$ in two steps:
\begin{enumerate}
\item Change $\mathcal{D}$ to $\mathcal{D'}$ by replacing each $D\in
  D$ with $D' <G$ its commensurator in $G$.  
\item Eliminate redundant entries of $\mathcal{D'}$ to obtain $\mathcal{P}\subset \mathcal{D}'$
 which contains no two elements which are conjugate in $G$. 
\end{enumerate}

Call $\mathcal{P}$ the {\it peripheral structure on $G$ induced by
  $H$}. 
This peripheral structure is only 
well-defined up to replacement of some elements of $\mathcal{P}$ by conjugates. On the other
hand, replacing $H$ by a commensurable subgroup of $G$ does not affect the induced
peripheral structure. We consider two peripheral structures on a group to be the same
if the same group elements are parabolic (i.e. conjugate into
$\bigcup\mc{P}$) in the two structures.
\end{definition}
\begin{remark}
  Since $H$ is quasiconvex, so are the intersections $H\cap
  H^{g_1}\cap\cdots\cap H^{g_j}$ \cite[Lemma 2.7]{gmrs}.
  Because infinite quasiconvex subgroups of hyperbolic groups are finite index
  in their commensurators \cite[Lemma 2.9]{gmrs}, it follows that
  each of the elements of $\mc{D}$ or $\mc{P}$ contains some such $H\cap
  H^{g_1}\cap\cdots\cap H^{g_j}$ as a finite index subgroup.
\end{remark}
\begin{example}
If we have the minimal infinite subgroup $U=H\cap H^{g_1}\cap H^{g_2}$, 
then $U$ will appear as a subgroup of $H$ in three ways, up to $H$-conjugacy:
$U, U^{g_1^{-1}}=H^{g_1^{-1}} \cap H \cap H^{g_1^{-1}g_2}, U^{g_2^{-1}} = H^{g_2^{-1}}\cap H^{g_2^{-1}g_1}\cap H$.
Thus, there will be $\leq 3$ $H$--commensurators of conjugates of $U$
appearing in $\mc{D}$.  There could be strictly fewer: $U$ and $U^{g_1^{-1}}$ may
be commensurable in $H$ if $g_1\in \mathrm{Comm}_G(U)$.  Similarly, we get $\leq 3$
conjugates of the $G$--commensurator of $U$ in $G$ in $\mathcal{D}'$, 
but these are all $G$--conjugate, so only one element coming from $U$ remains in
$\mathcal{P}$.
\end{example}

We state also the malnormal virtually special quotient theorem \cite[Theorem 12.3]{Wise11} for reference.
\begin{theorem} \label{malnormal}
Let $G$ be a virtually special hyperbolic group. Let $\{H_1,\ldots,H_m\}$ be an almost malnormal
collection of quasiconvex subgroups. Then there exists finite-index subgroups $\dot{H}_i \dot{\unlhd} H_i$, $i=1,\ldots,m$ such
that for any further finite index subgroups $H'_i \dot{\leq}
\dot{H}_i$, the quotient $G/\ll H_1',\ldots, H_m'\gg$ is virtually
special. 
\end{theorem}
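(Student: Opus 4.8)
The statement to be proved is Theorem~\ref{malnormal}, the malnormal virtually special quotient theorem of Wise, applied to a virtually special hyperbolic group $G$ with an almost malnormal collection of quasiconvex subgroups $\{H_1,\dots,H_m\}$. The plan is to reduce the problem, via passage to a finite-index subgroup, to the case where $G$ is the fundamental group of a compact special cube complex, and then to realize the quotient $G/\ll H_1',\dots,H_m'\gg$ as the fundamental group of a cube complex obtained by a cubical Dehn-filling construction, which one shows is again virtually special by verifying it lies in the class $\mathcal{QVH}$ (using Theorem~\ref{QVH}) or by directly exhibiting a special structure.

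\textbf{Key steps, in order.} First I would reduce to the genuinely special case: since $G$ is virtually special, pass to a finite-index subgroup $G_0 \dot\leq G$ with $G_0 = \pi_1(C)$ for $C$ a compact special cube complex; intersecting the $H_i$ with (conjugates of) $G_0$ replaces the given almost malnormal collection by an almost malnormal collection of quasiconvex subgroups of $G_0$, and a result producing suitable $\dot H_i$ for $G_0$ can be promoted back to $G$ by a standard commensurability/finite-index bookkeeping argument (the conclusion is stable under passing between $G$ and $G_0$ because ``any further finite-index subgroup'' is built into the statement). Second, with $G = \pi_1(C)$ special, realize each $H_i$ as $\pi_1$ of a compact locally convex subcomplex $Y_i \hookrightarrow C$, using quasiconvexity together with Haglund--Wise canonical completion/retraction machinery to arrange the $Y_i$ to be embedded and to have embedded walls after passing to a further finite cover; almost malnormality of the collection translates into the $Y_i$ (and their distinct translates) being ``far apart'' in a controlled sense. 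Third, perform the cubical filling: for each $i$ choose the finite-index $\dot H_i$ so that the corresponding cover $\hat Y_i \to Y_i$ has walls whose ``lengths'' are large compared to the injectivity radius of $C$ near $Y_i$, and glue in cubical cones/filling pieces along the $\hat Y_i$ (analogous to the hierarchies built in Section~\ref{virtual gluing}), producing a cube complex $\bar C$ with $\pi_1(\bar C) = G/\ll H_1',\dots,H_m'\gg$ for any further finite-index $H_i' \dot\leq \dot H_i$. Fourth, show $\bar C$ is virtually special: the filled complex has a quasiconvex hierarchy — splitting along the walls of $C$ that survive and along the filling walls — so $\pi_1(\bar C) \in \mathcal{QVH}$, and one concludes by Theorem~\ref{QVH}. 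Hyperbolicity of the quotient, needed both for the hierarchy's edge groups to be quasiconvex and to apply Theorem~\ref{QVH}, follows from hyperbolic Dehn filling (the Gromov--Osin--Groves--Manning theory, as in Theorem~\ref{filling}) once the $\dot H_i$ are chosen deep enough.

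\textbf{Main obstacle.} The crux is the third step: arranging, by a single choice of finite-index subgroups $\dot H_i$, that \emph{every} subsequent finite-index choice $H_i' \dot\leq \dot H_i$ still yields a \emph{hyperbolic} quotient with a \emph{quasiconvex} hierarchy of the filled complex. Hyperbolicity of the filling is governed by a ``sufficiently deep'' hypothesis on the $H_i'$, which one secures once and for all by choosing $\dot H_i$ to kill all short elements (a Dehn-filling-type finiteness statement); but one must simultaneously guarantee that the filling walls remain two-sided, embedded, and mutually non-crossing in an appropriate cover, so that cutting along them together with the old walls of $C$ genuinely produces a finite hierarchy with finitely generated quasi-isometrically embedded edge groups, putting $\pi_1(\bar C)$ into $\mathcal{QVH}$. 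Reconciling these two requirements — deep enough for hyperbolicity and geometrically controlled enough for the hierarchy — using the canonical completion/retraction and wall combinatorics of special cube complexes, is where essentially all the work lies; the remaining steps are formal applications of Theorem~\ref{QVH}, hyperbolic Dehn filling, and finite-index bookkeeping.
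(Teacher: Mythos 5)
The paper does not prove Theorem \ref{malnormal} at all: it is quoted verbatim from Wise \cite[Theorem 12.3]{Wise11} and used as a black box (indeed, the appendix explicitly says the authors chose to cite Wise's small-cancellation machinery rather than reproduce it). So there is no in-paper argument to compare yours against; your proposal must stand on its own.

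On its own terms, the proposal has a genuine gap, and you have located it yourself: your ``third step'' and ``main obstacle'' paragraph together amount to a restatement of the theorem rather than a proof of it. The entire content of the MSQT is the construction of a single choice of $\dot H_i$ such that \emph{every} further finite-index $H_i'\dot\leq\dot H_i$ yields a virtually special quotient, and your sketch says only that one should ``glue in cubical cones'' and then check that the result has a quasiconvex hierarchy. Two specific problems. First, hyperbolic Dehn filling (Osin, Groves--Manning) gives hyperbolicity of the quotient \emph{group} for all sufficiently long fillings, but ``sufficiently long'' there means avoiding a finite set of elements, which is \emph{not} the same as ``contained in a fixed finite-index subgroup $\dot H_i$'': a finite-index subgroup of $\dot H_i$ can still contain short elements not in the kernel-avoidance set, so the uniform-over-all-further-finite-index-subgroups quantifier does not follow from Dehn filling alone. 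Making it uniform is exactly what Wise's cubical small cancellation theory (the $B(6)$-type conditions on cubical presentations, wall-projection estimates, and the ladder/annulus theorems) accomplishes, and none of that is present or replaced in your outline. Second, even granting hyperbolicity of the quotient, the quotient group does not obviously act on any CAT(0) cube complex, let alone one with a quasiconvex hierarchy; the ``filled complex $\bar C$'' you posit is not an NPC cube complex after coning, and producing the wall structure on the quotient is again the heart of Wise's argument. The reduction in your first step also needs more care than ``standard bookkeeping,'' since the normal closure of the $H_i'$ in $G_0$ and in $G$ differ, though this is fixable. In short: the scaffolding (reduce to the special case, fill, verify $\mathcal{QVH}$, apply Theorem \ref{QVH}) is reasonable, but the load-bearing step is missing.
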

\begin{proof}[Proof of Theorem \ref{filling}]
Let $H\leq G$ be quasiconvex and virtually special, and let $g\in
G\setminus H$.  Let $h$ be the height of $H$ in $G$. 
We will induct on the height, noting that the height zero ($H$ finite)
case holds trivially.

Let $\mathcal{P}=\{P_1,\ldots,P_m\}$ be the peripheral system associated to $H \leq G$,
and $\mathcal{D}$ the peripheral system of $H$ 
from Definition \ref{peripheral}.  
 By Theorem
\ref{malnormal}, there are finite-index subgroups
$\dot{D}_j\dot{\unlhd} D_j$ 
for each $D_j\in \mathcal{D}$ such that for any further finite-index
subgroups $D'_j \dot{\leq} \dot{D}_j$, the quotient 
$H(D_1',\ldots,D_n'):=H/\ll \bigcup_j D'_j \gg $ is virtually
special. 

For each $D_j\in \mathcal{D}$, there is some unique $P_{i_j}$ and some $g_j$ so
that 
$$g_j^{-1} D_j g_j\ \dot{\leq}\  P_{i_j}.$$  The element $g_j$ is not unique, but
if $g_j'$ is another such element, then $g_j^{-1}g_j'\in P_{i_j}$.
In particular, different $G$--conjugates of $D_j$ in $P_{i_j}$ are
actually $P_{i_j}$--conjugates, so there are only finitely many of them.

Let $P_i\in \mc{P}$, and let 
\[ \mc{S}_i = \{ \dot{D}_j^g \mid D_j\in \mc{D}, g\in G, D_j^g\dot{\leq}
P_i\}.\]
By the way $\mc{D}$ and $\mc{P}$ are
defined, $\mc{S}_i$ is never empty.  By
the argument in the last paragraph, $\mc{S}_i$ is a finite collection,
so $I_i:= \bigcap \mc{S}_i \dot{\unlhd} P_i$.

Theorems \ref{oldstuff} and \ref{heightdecreases}
imply that there is a finite subset $B\subset
\bigcup\mc{P}$ so that whenever $\phi\co G\to G(N_1,\ldots,N_m)$ is an
$H$--filling (see Definition \ref{H-filling})
satisfying $(\bigcup N_i)\cap B=\emptyset$
and $N_i\dot{\lhd} P_i$, then:
\begin{enumerate}
\item  The image $\phi(H)$ is quasiconvex of height $<h$ in the hyperbolic group
  $G(N_1,\ldots,N_m)$. (Theorem \ref{heightdecreases})
\item 
  $\phi(H)\cong H(K_1,\ldots,K_n)$, where $H(K_1,\ldots,K_n)$ is the
  induced filling of $H$, described in Remark \ref{rem:induced filling}.  
  (Theorem \ref{oldstuff}.\eqref{eq:induced injective})
\item  $\phi(g)\notin\phi(H)$. (Theorem \ref{oldstuff}.\eqref{separateg})
\end{enumerate}

Since $H$ is residually finite, and each $P_i$ is a finite extension
of a subgroup of $H$, each $P_i$ is residually finite.  Hence there
are normal subgroups $N_i\dot{\unlhd} P_i$ so that $(\bigcup
N_i)\cap B = \emptyset$.  These normal subgroups need
not define an $H$--filling, but we can instead consider the subgroups
\[ N_i' = N_i\cap I_i. \]
Then $\phi\co G\to G(N_1',\ldots,N_m')$ is an $H$--filling inducing
a filling $H\to H(K_1,\ldots,K_n)$ satisfying the hypotheses of
Theorem \ref{malnormal}.  In particular, the image $\bar{H}$ of $H$ in
$\bar{G}:=G(N_1',\ldots,N_m')$ is virtually special.  By Theorem
\ref{heightdecreases}, $\bar{G}$ is hyperbolic and $\bar{H}\leq
\bar{G}$ is quasiconvex, of height $<h$.
Moreover, Theorem \ref{oldstuff} implies $\phi(g)\notin \phi(H)$.

By induction, there is a quotient $\bar{\phi}\co\bar{G}\to \mc{G}$
so that $\bar{\phi}(\phi(g))\notin \bar{\phi}(\phi(H))$ and
$\bar{\phi}(\phi(H))$ is finite.
\end{proof}

\subsection{Definitions}

\begin{definition} (See \cite[Section 3]{AGM08})
Let $G$ be a hyperbolic group and $H$ a quasi-convex subgroup, and let
$\mc{P}$ and $\mc{D}$ be the induced peripheral structures on $G$ and
$H$ described above.  Let $X$ be the cusped space of $(G,\mc{P})$ and
$Y$ the cusped space of $(H,\mc{D})$ (with respect to choices of
generating sets).  The inclusion $\phi\co H\to G$ sends peripheral subgroups in $\mc{D}$ into (conjugates of) peripheral subgroups in $\mc{P}$, and so induces a proper $H$-equivariant Lipschitz map $\check{\phi} \co Y \to X$.
We say that $(H,\mc{D})$ is {\em $C$-relatively quasiconvex} in $(G,\mc{P})$ if
$\check{\phi}$ is $C$-Lipschitz and
has $C$-quasiconvex image in $X$. 
\end{definition}
In \cite[Appendix A]{MM-P} it is explained that the above definition agrees with other notions of relative quasiconvexity, such as those in \cite{Hruska}.

The following is proved in \cite{AGM08} under the assumption that $G$ is
torsion-free.  It was extended to the general setting in \cite{mp2}.
\begin{proposition}\cite[Proposition 3.12]{AGM08},\cite[Corollary 1.9]{mp2}
The pairs $(H,\mc{D})$ and $(G,\mc{P})$ are both relatively hyperbolic and with these
peripheral structures $(H,\mc{D})$ is a relatively quasi-convex subgroup of $(G,\mc{P})$.
\end{proposition}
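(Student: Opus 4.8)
The plan is to establish the two relative hyperbolicity assertions first, using the specific way $\mc{D}$ and $\mc{P}$ are built, and then to deduce the relative quasiconvexity from quasiconvexity of $H$ in $G$ together with the compatibility of the two peripheral structures.

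First I would verify that $\mc{D}$ is an almost malnormal collection of infinite quasiconvex subgroups of $H$, and, one level up, that $\mc{P}$ is such a collection in $G$. Quasiconvexity is immediate from the Remark after Definition \ref{peripheral}: every element of $\mc{D}$ contains some intersection $H\cap H^{g_1}\cap\cdots\cap H^{g_j}$ as a finite-index subgroup by \cite[Lemma 2.7]{gmrs} and \cite[Lemma 2.9]{gmrs}, and a finite-index overgroup of a quasiconvex subgroup of a hyperbolic group is quasiconvex. Almost malnormality is where the construction of the malnormal core is used: if $D_i^{g}\cap D_j$ were infinite for $D_i,D_j\in\mc{D}$ then, being an infinite quasiconvex subgroup of $H$, it would be commensurable with one of the minimal infinite intersections of conjugates of $H$ from \cite[Corollary 3.5]{AGM08}; minimality together with the removal of $H$-conjugate redundancies forces $i=j$ and $g$ to commensurate $D_i$, whence $g\in D_i$ because $D_i$ was chosen to be a commensurator in $H$ and infinite quasiconvex subgroups are finite index in their commensurators. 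The same argument in $G$ shows $\mc{P}$ is almost malnormal. Invoking the standard fact (Bowditch, Osin) that a hyperbolic group is hyperbolic relative to any almost malnormal collection of quasiconvex subgroups then gives that $(H,\mc{D})$ and $(G,\mc{P})$ are relatively hyperbolic.

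For relative quasiconvexity, note that by construction the commensurator in $G$ of each $D\in\mc{D}$ is conjugate to an element of $\mc{P}$, so the inclusion $\phi\co H\to G$ sends each peripheral subgroup of $\mc{D}$ into a conjugate of one in $\mc{P}$ and therefore induces an $H$-equivariant Lipschitz map $\check\phi\co Y\to X$ of cusped spaces carrying each combinatorial horoball of $Y$ into a horoball of $X$. To bound how far a geodesic $\gamma$ of $X$ with endpoints in $\check\phi(Y)$ can stray from $\check\phi(Y)$, I would cut $\gamma$ into maximal arcs lying in horoballs and arcs lying in the thick part (a copy of the Cayley graph of $G$). Each thick arc has endpoints a bounded distance from $H$, so quasiconvexity of $H$ in the word metric on $G$ keeps it uniformly close to $H\subset\check\phi(Y)$. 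A deep excursion of $\gamma$ into a horoball based at a coset $gP$ forces $g^{-1}Hg\cap P$ to be infinite --- otherwise a geodesic between points near $H$ could not penetrate that horoball deeply, by quasiconvexity of $H$ and the shape of the horoball metric --- and infiniteness of this intersection, by the way $\mc{P}$ was assembled from $\mc{D}$, forces $gP$ to contain a conjugate of some $D\in\mc{D}$ lying close to $H$; thus the excursion is shadowed up to bounded error by $\check\phi$ of the corresponding horoball of $Y$. Combining the two cases yields a uniform constant $C$ with $\gamma$ in the $C$-neighborhood of $\check\phi(Y)$, i.e.\ $C$-relative quasiconvexity, which by \cite[Appendix A]{MM-P} agrees with the notion of \cite{Hruska}.

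I expect the last step to be the main obstacle: matching a deep horoball excursion of $X$, based at a $G$-coset of some $P\in\mc{P}$, with a horoball of $Y$, based at an $H$-coset of some $D\in\mc{D}$, when $D$ is only a finite-index subgroup of $H\cap P^{g}$ rather than equal to it. This needs a careful distance comparison inside the combinatorial horoballs of $Y$ and $X$ and is the technical core of \cite[\S3]{AGM08}; the cleanest route in practice is simply to cite \cite[Proposition 3.12]{AGM08} and its torsion-permitting extension \cite[Corollary 1.9]{mp2}, as is done here.
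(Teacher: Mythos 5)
The paper does not actually supply a proof of this proposition: it is imported verbatim by citation from \cite[Proposition 3.12]{AGM08} (torsion-free case) and \cite[Corollary 1.9]{mp2} (general case), and is never re-derived in this text. So there is no ``paper proof'' to compare against in the strict sense; what can be assessed is whether your reconstruction matches the argument in those references, and it largely does: almost malnormality of $\mc{D}$ and $\mc{P}$ together with Bowditch's criterion gives the two relative hyperbolicity statements, and the $H$-equivariant Lipschitz map of cusped spaces with a thick/horoball case analysis gives relative quasiconvexity, with the horoball-matching step correctly identified as the technical core of \cite[\S3]{AGM08}.

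One step is stated imprecisely, though. You justify almost malnormality of $\mc{D}$ by asserting that $D_i^{h}\cap D_j$, ``being an infinite quasiconvex subgroup of $H$, would be commensurable with one of the minimal infinite intersections of conjugates of $H$.'' That is not a property of arbitrary infinite quasiconvex subgroups of $H$; most infinite quasiconvex subgroups of $H$ are not commensurable with any $H\cap H^{g_2}\cap\cdots\cap H^{g_j}$. What actually runs is the following: since $U_i\,\dot\leq\, D_i$ and $U_j\,\dot\leq\, D_j$ with $U_i,U_j$ minimal infinite intersections, $U_i^{h}\cap U_j$ has finite index in $D_i^{h}\cap D_j$ and is therefore infinite; and $U_i^{h}\cap U_j$ is \emph{itself} an intersection of $G$-conjugates of $H$ (one of which is $H$, since $U_j\leq H$), so minimality of $U_j$ and the definition of $\mc{D}$ as a set of $H$-commensurators apply to it directly. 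The same finite-index bootstrapping is what is needed for $\mc{P}$, except one tracks $G$-conjugacy and $G$-commensurators instead of $H$-conjugacy. With that replacement the argument is sound; as written it appeals to a false general principle.
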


\begin{definition}\label{H-filling}
  Let $(H,\mc{D})$ be a relatively quasi-convex subgroup
  of $(G,\mc{P})$, where $\mc{P}=\{P_1,\ldots,P_m\}$.  
  Let $\{N_i\lhd P_i\}$ be given.  The quotient 
  \[ G(N_1,\ldots,N_m) := G/\ll N_1\cup\cdots\cup N_m\gg \]
  is a \emph{filling of $(G,\mc{P})$}.  
  It is an \emph{$H$--filling} if $N_i^g\subset P_i^g\cap H$ whenever $H\cap
  P_i^g$ is infinite.
\end{definition}
\begin{remark}
  The current definition of $H$--filling agrees with the one in
  \cite{AGM08} only in case $G$ is torsion-free.  As explained in
  \cite[Appendix B]{MM-P}, Definition \ref{H-filling} is the correct
  extension in case there is torsion.
\end{remark}

\begin{remark} \label{rem:induced filling}
As explained in \cite[Definition 3.2]{AGM08}, an $H$-filling $G(N_1,\ldots , N_m)$ induces a filling $H(K_1,\ldots , K_n)$ of $H$:  For each $D_i \in \mc{D}$, there is a $c_i \in G$ and $P_{j_i} \in \mc{P}$ so that $D_i \subseteq c_i P_{j_i} c_i^{-1}$.  Then $K_i = c_iN_{j_i}c_i^{-1} \cap D_i$.  The inclusion
$H \into G$ induces a homomorphism $H(K_1,\ldots , K_n) \to G(N_1,\ldots ,N_m)$.
\end{remark}

\begin{definition}
  Let $(G,\mc{P})$ be a relatively hyperbolic group.
  We say that a statement $\mathsf{S}$ about fillings $G(N_1,\ldots,N_m)$
  holds \emph{for all sufficiently long fillings} if there is a finite
  set $B\subset \bigcup\mc{P}$ 
  so that whenever $G(N_1,\ldots,N_m)$ is a filling so that
  $\bigcup_{i=1}^n N_i$ does not contain $B$, then $\mathsf{S}$
  holds.

  Similarly, if $(H,\mc{D})$ is a
  relatively quasiconvex subgroup of $(G,\mc{P})$, a statement
  $\mathsf{S}$ holds \emph{for all sufficiently long $H$--fillings}
  if  there is a finite set $B\subset\bigcup\mc{P}$ 
  so the statement $\mathsf{S}$ holds for all $H$--fillings
  $G(N_1,\ldots,N_m)$ so that $\bigcup_{i=1}^n N_i$ does not contain
  $B$.
\end{definition}

Obviously if $\mathsf{S}$ holds for all sufficiently long fillings,
then $\mathsf{S}$ holds for all sufficiently long $H$--fillings.
The fundamental theorem of relatively hyperbolic Dehn filling can 
be stated:
\begin{theorem}\label{rhdf}\cite{Osin07,DGO} (cf. \cite{rhds} in the
  torsion-free case)  Let $G$ be a group and
  $\mc{P}=\{P_1,\ldots,P_m\}$ a collection of subgroups so that
  $(G,\mc{P})$ is relatively hyperbolic, and
  let $F\subset G$ be finite.  Then for all sufficiently long
  fillings $\phi\co G\to \bar{G}:=G(N_1,\ldots,N_m)$,
  \begin{enumerate}
  \item $\ker(\phi|_{P_i})=N_i$ for each $P_i\in\mc{P}$;
  \item $(\bar{G},\{\phi(P_1),\ldots,\phi(P_m)\})$ is relatively hyperbolic;
    and
  \item $\phi|_F$ is injective.
  \end{enumerate}
\end{theorem}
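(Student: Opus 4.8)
This is the classical relatively hyperbolic Dehn filling theorem, so the plan is only to indicate the shape of the argument; I would follow the cusped-space approach of Groves--Manning, and of Osin and Dahmani--Guirardel--Osin in the presence of torsion, since that is the language used in the rest of this appendix. First I would fix the cusped space $X = X(G,\mathcal{P})$, built from a Cayley graph of $G$ by attaching a combinatorial horoball to each left coset $gP_i$; relative hyperbolicity of $(G,\mathcal{P})$ is equivalent to $X$ being $\delta$-hyperbolic, so fix such a $\delta$. The finite exceptional set $B\subset\bigcup\mathcal{P}$ that makes ``sufficiently long'' precise will be the ball of radius $R = R(\delta,F)$ about $1$ in $\bigcup\mathcal{P}$, with $R$ chosen in the course of the argument.

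Given normal subgroups $N_i\lhd P_i$ avoiding $B$, I would form the \emph{filled cusped space} $\bar X$ by collapsing, inside the horoball on each coset $gP_i$, the orbits of the conjugate $N_i^g$ acting on $gP_i$ along the boundary; the quotient $\bar G = G(N_1,\dots,N_m)$ acts on $\bar X$, cocompactly outside the horoballs. The core step is to prove that $\bar X$ is $\delta'$-hyperbolic for a uniform $\delta'$ depending only on $\delta$. I would establish this with a local-to-global criterion for coarse hyperbolicity: a geodesic triangle in $\bar X$ whose sides are short lifts to a geodesic triangle in $X$ and is therefore $\delta$-thin, whereas a triangle that penetrates deeper only enters the modified horoballs, where the longness hypothesis on the $N_i$ forces the new identifications to create no short essential loops and keeps each modified horoball uniformly hyperbolic; the criterion then upgrades thinness of bounded triangles to global $\delta'$-hyperbolicity of $\bar X$. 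Uniform hyperbolicity of $\bar X$ immediately gives conclusion (2), since the horoballs of $\bar X$ are the images of the horoballs of $X$, and the stabilizers of (representatives of the $\bar G$-orbits of) these horoballs are conjugates of the $\phi(P_i)$.

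Conclusions (1) and (3) I would then extract by length and van Kampen diagram arguments in $\bar X$. For (1): a reduced loop in $\bar X$ witnessing a relation among the generators of $\phi(P_i) = P_i/N_i$ alone lifts to a path in $X$ representing an element of $P_i$ killed by the filling, and choosing $R$ large enough forces that element into $N_i$, so $\ker(\phi|_{P_i}) = N_i$. For (3): enlarging $R$ so that for every nontrivial $f\in F$ a geodesic in $X$ from $1$ to $f$ avoids the portions of the horoballs that get collapsed, one sees that no nontrivial element of $F$ can map to $1$ in $\bar G$, and the same estimate gives injectivity of $\phi|_F$.

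The hard part will be the uniform hyperbolicity of $\bar X$, and specifically making the local-to-global step work uniformly over \emph{all} sufficiently long fillings and when the peripheral subgroups $P_i$ have torsion: controlling the way geodesics recur into a modified horoball $\mathcal H/N_i^g$ and ruling out short essential loops there is exactly where the small-cancellation theory over relatively hyperbolic groups of \cite{Osin07}, or the very rotating families of \cite{DGO}, are needed, since the torsion-free treatment of \cite{rhds} does not suffice --- which is why the statement is attributed to those works.
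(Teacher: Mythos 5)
The paper does not prove this theorem; it is a black-box citation to \cite{Osin07}, \cite{DGO}, and (in the torsion-free case) \cite{rhds}, and the appendix then uses it as an input to Theorems \ref{oldstuff} and \ref{heightdecreases}. So there is no ``paper's own proof'' to compare against. That said, your sketch is a reasonably faithful summary of the cusped-space strategy of those references: build $X(G,\mc{P})$, form the filled space by quotienting by the filling kernel, prove uniform $\delta'$-hyperbolicity of the quotient via a local-to-global argument that uses the ``long filling'' hypothesis to control the modified horoballs, and then read off (1), (2), (3) from geodesic/van Kampen estimates. Two small points worth flagging. First, your description of the filled cusped space (``collapsing orbits of $N_i^g$ along the boundary'') is vaguer than what is actually done: one quotients the whole cusped space by the action of $K=\ker(\phi)$, as in Theorem \ref{oldstuff}, and checks that the result is (up to extra loops) the cusped space of $(\bar G, \bar{\mc P})$; this phrasing matters because the extra loops are exactly why the appendix is careful about free actions (Remark \ref{free cusped space}). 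Second, you are right that the uniform local-to-global step in the presence of torsion is where the real work is, and the appendix is explicit that this is why it quotes \cite{Osin07} rather than reproving anything --- see the proof of Theorem \ref{oldstuff}, which says that once one uses the filling theorem from \cite{Osin07}, the remaining torsion-free arguments of \cite{AGM08} ``work as written.'' So your identification of the hard point is correct and consistent with the paper's attitude: it outsources exactly that step.
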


Our chief new Dehn filling result in this appendix is the following:

\begin{theorem}\label{heightdecreases}
  Let $G$ be hyperbolic, and let $H$ be height $k\geq 1$ and  quasi-convex in $G$.  Suppose
  that $\mc{D}$ and $\mc{P}=\{P_1,\ldots,P_m\}$ are as in Definition \ref{peripheral}. 
  Then for all sufficiently long $H$--fillings 
  \[ \phi: G \to \bar{G}:=G(N_1,\ldots,N_m) \]
  with $N_i\lhd P_i$ finite index for all $i$, 
  the subgroup $\phi(H)$ is quasi-convex of height
  strictly less than $k$ in the hyperbolic group $\bar{G}$.
\end{theorem}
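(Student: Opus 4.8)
The plan is to combine the relatively hyperbolic Dehn filling machinery (Theorem \ref{rhdf}) with the geometric analysis of height carried out in \cite{AGM08}, upgraded to track how intersections of conjugates of $H$ behave under filling. First I would recall the characterization of height: the height of $H$ in $G$ is the largest $n$ for which there exist essentially distinct $g_1=1,g_2,\dots,g_n$ with $H\cap H^{g_2}\cap\cdots\cap H^{g_n}$ infinite. So to show $\phi(H)$ has height $<k$ in $\bar G$, it suffices to show that any collection of essentially distinct cosets $\bar g_1\bar H,\dots,\bar g_k\bar H$ in $\bar G$ has $\bigcap_i \phi(H)^{\bar g_i}$ finite. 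The strategy is to lift such a configuration: choose preimages $g_i\in G$, and argue that for sufficiently long fillings, an infinite intersection $\bigcap_i\phi(H)^{\bar g_i}$ in $\bar G$ forces an infinite intersection $\bigcap_i H^{g_i'}$ in $G$ for essentially distinct $g_i'$ (possibly after adjusting the $g_i$ within their cosets and using that the filling kernel is ``deep''), contradicting that $H$ has height exactly $k$ — here one uses that the configuration realizing height $k$ in $G$ is, up to commensuration, captured by $\mc D$ and its $G$-commensurators $\mc P$, and the fillings are $H$-fillings so they do not create new deep infinite intersections.

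The cleanest route to the lifting step is geometric, via the cusped spaces. I would set $X$ to be the cusped space of $(G,\mc P)$ and note that $\check\phi\co Y\to X$ embeds the cusped space of $(H,\mc D)$ quasiconvexly. An infinite intersection $\bigcap_i \phi(H)^{\bar g_i}$ in $\bar G$ corresponds to a bi-infinite quasigeodesic in the cusped space $\bar X$ of $\bar G$ that stays uniformly close to $k$ distinct translates of the image of $Y$. The key input from filling theory is that for sufficiently long fillings, the cusped space $\bar X$ is obtained from $X$ by a controlled ``Dehn surgery'' (this is the content behind Theorem \ref{rhdf} and the metric estimates of \cite{AGM08,rhds,Osin07,DGO}): geodesics in $\bar X$ avoiding the filled-in parts lift to uniform quasigeodesics in $X$. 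Since $H$ has height exactly $k$, there is a global bound on how long two distinct translates of $Y$ can fellow-travel in $X$ in an essential way — any such fellow-traveling must enter a peripheral coset neighborhood corresponding to an element of $\mc D$. Choosing $B\subset\bigcup\mc P$ large enough (using Theorem \ref{oldstuff} to control the induced filling of $H$ and Theorem \ref{rhdf} to control the geometry) rules out an infinite such intersection surviving in $\bar G$: either the intersection is not essentially distinct, or its would-be-infinite part is killed by the filling because the relevant $N_i$ is finite index in $P_i$, so the corresponding peripheral direction becomes finite in $\bar G$.

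The main obstacle I expect is the bookkeeping around torsion and around the precise relationship between the peripheral structure $\mc D$ on $H$, its $G$-commensurators forming $\mc P$, and the filling parameters $N_i$: one must ensure that the finite exceptional set $B$ works \emph{simultaneously} for height reduction, for the conclusions of Theorem \ref{oldstuff} (injectivity of the induced filling, separation of $g$), and for hyperbolicity of $\bar G$ and quasiconvexity of $\phi(H)$. Concretely, the delicate point is showing that a height-$k$ configuration of translates of $H$ in $G$ always ``bottoms out'' in a subgroup commensurable with some $D_j\in\mc D$, so that after filling (where $D_j$'s peripheral container $P_{j}$ gets a finite-index normal subgroup $N_j$ killed) no infinite $k$-fold intersection remains; this requires carefully invoking \cite[Lemma 2.7, Lemma 2.9]{gmrs} (quasiconvexity and finite-index-in-commensurator for the iterated intersections) together with the finiteness statement \cite[Corollary 3.5]{AGM08} underlying Definition \ref{peripheral}. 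Quasiconvexity of $\phi(H)$ and hyperbolicity of $\bar G$ themselves should follow from the now-standard relatively hyperbolic filling results (they are essentially in \cite{AGM08} and its torsion extension \cite{mp2,MM-P}), so the genuinely new content is the height-strict-decrease, and that is where I would concentrate the argument.
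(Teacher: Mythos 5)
Your high-level outline is right — reduce to showing that an infinite $k$-fold intersection of essentially distinct conjugates of $\phi(H)$ in $\bar G$ would pull back to an infinite $k$-fold intersection of essentially distinct conjugates of $H$ in $G$; such an intersection is commensurable with a $\mathcal{D}$-conjugate; and the $N_i \dot{\lhd} P_i$ finite-index condition kills it. But the mechanism you propose for the central lifting step has a real gap, and it is precisely the step the paper's appendix was rewritten to handle.

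The problem is in the sentence ``argue that for sufficiently long fillings, an infinite intersection $\bigcap_i\phi(H)^{\bar g_i}$ in $\bar G$ forces an infinite intersection $\bigcap_i H^{g_i'}$ in $G$.'' If you choose preimages $g_i$ of $\bar g_i$, the full preimage of $\bigcap_i\phi(H)^{\bar g_i}$ under $\phi$ is $\bigcap_i (KH)^{g_i}$ (with $K=\ker\phi$), which is much larger than $\bigcap_i H^{g_i}$ and need not be commensurable with it. An infinite-order element of the downstairs intersection can lift to a product of things in \emph{different} $H$-conjugates and elements of $K$, so you cannot directly conclude that some $\bigcap_i H^{g_i'}$ is infinite by adjusting representatives. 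Your geometric reformulation (a quasigeodesic in $\bar X$ fellow-travelling $k$ translates of the image of $Y$, then lifting it to $X$) is exactly the AGM08 strategy. The authors say explicitly in the remark after the theorem statement that this route depended on Part 2 of \cite{rhds}, which assumes torsion-freeness, and that the purpose of the appendix is to give a ``completely different proof that height decreases.'' So either you must extend that whole machinery to the torsion case — which is what they chose not to do — or you need a different device.

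The device the paper introduces, and which your sketch omits, is the Stallings-style pullback. They take an $R$-hull $\tilde Z_R\subset\Gamma$ for $H$, set $Z_R=\tilde Z_R/H$, $Y=\Gamma/G$, and form $S_n\subset Z_R^n$ (the pullback of $n$ copies of $Z_R\to Y$ minus the fat diagonal). Theorem \ref{heightmultip} shows height equals the largest $n$ with a component of $S_n$ all of whose $\tau_{C,i}$-images are infinite. The key observations are Lemma \ref{hullquotientembeds} (for long $H$-fillings, $K$-translates of $\tilde Z_R$ outside $H$ are disjoint from $\tilde Z_R$) and Proposition \ref{uniformR} (the quotient of an $R$-hull is an $R'$-hull downstairs), which give the commutative square \eqref{ZvbarZ} in which $Z_R\to \bar Z_R$ is a \emph{homeomorphism}. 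Hence $S_n$ and $\bar S_n$ are literally identified, $\bar\tau_{C,i}=q\circ\tau_{C,i}$, and the lift you want is automatic: an infinite-order element $\bar a$ in a component of $\bar S_n$ comes from an $a$ of infinite order in $G$ lying in an $n$-fold essentially distinct intersection of conjugates of $H$, hence commensurable with some $D\in\mathcal D$; then $N_{i}\dot{\lhd} P_i$ kills a power of $a$, contradicting the infinite order of $\bar a=\phi(a)$. Without this (or an equivalent substitute), the lifting step in your proposal is an assertion rather than an argument, and it is exactly where torsion bites.

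Your last paragraph correctly flags the torsion bookkeeping as the main obstacle, and your observation that an $n$-fold essentially distinct infinite intersection is commensurable with some $D_j\in\mathcal D$ (via \cite[Cor.~3.5]{AGM08} and \cite[Lem.~2.9]{gmrs}) is sound and is used verbatim in the paper. What is missing is a torsion-robust way to transport height from $\bar G$ back to $G$; the multiplicity characterization is the paper's answer, and I would encourage you to look for it (or some analogue) rather than try to push the AGM08 fellow-travelling argument through.
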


\begin{remark}
We proved Theorem \ref{heightdecreases} in \cite{AGM08} under the
assumption that $G$ was torsion-free.  Much of the proof from
\cite{AGM08} still works without that assumption, but our argument that
height is reduced in the quotient depended on the machinery of Part 2 of
\cite{rhds}, in which torsion-freeness is assumed.  Our main
innovation in this appendix is a completely different proof that
height decreases under Dehn filling.
\end{remark}

\subsection{Geometric finiteness}

Geometric finiteness is a dynamical condition.  We recall the relevant
definitions.
\begin{definition}
  Let $M$ be a compact metrizable space with at least $3$ points, and
  let $\Theta(M)$ be the 
  set of unordered distinct triples of points in $M$.  Any action of
  $G$ on $M$ induces an action on $\Theta(M)$.  The action of $G$ on
  $M$ is said to be a \emph{convergence group action} if the induced
  action on $\Theta(M)$ is properly discontinuous.
\end{definition}

\begin{definition}
  Suppose $G\curvearrowright M$ is a convergence group action.  A
  point $p\in M$ is a \emph{conical limit point} if there is a
  sequence $\{g_i\}_{i\in \bN}$ and a pair of points $a$, $b$ so that
  $g_i p\to b$ but for every $x\in M\setminus\{p\}$, we have $g_i x\to
  a$. 

  A point $p$ is \emph{parabolic} if $\Stab_G(p)$ is infinite but
  there is no infinite order $g\in G$ and $q\neq p\in M$ so that
  $\Fix(g)=\{p,q\}$.

  A parabolic point $p$ is called \emph{bounded parabolic}
 if $\Stab_G(p)$ acts cocompactly on $M\setminus \{p\}$.
\end{definition}

\begin{definition} \label{d:geom fin}
  The action $G\curvearrowright M$ is \emph{geometrically finite} if
  every point in $M$ is a conical limit point or a bounded parabolic
  point.  Say that $(G,\mc{P})$ \emph{acts geometrically finitely} on
  $M$ if all of the following hold:
  \begin{enumerate}
  \item $G\curvearrowright M$ is a geometrically finite convergence action.
  \item Each $P\in \mc{P}$ is equal to $\Stab_G(p)$ for some bounded
    parabolic point $p$.
  \item For any bounded parabolic point $p$, the stabilizer
    $\Stab_G(p)$ is conjugate to exactly one element of $\mc{P}$.
  \end{enumerate}

  Let $X$ be a $\delta$--hyperbolic $G$--space, so that $(G,\mc{P})$
  acts geometrically finitely on $\partial X$.  Then we say that
  $(G,\mc{P})$ acts \emph{geometrically finitely on $X$}.  
\end{definition}

It is useful when talking about Dehn filling to allow parabolic
subgroups to be finite.  We will use the following definitions:
\begin{definition}
  Let $G\curvearrowright M$ be a convergence action, and say that
  $p\in M$ is a \emph{finite parabolic point} if $p$ is isolated and
  has finite stabilizer.

  For $\mc{P}$ a finite collection of subgroups of $G$, write
  $\mc{P}_\infty$ for the subcollection of infinite subgroups, and
  $\mc{P}_f$ for the subcollection of finite subgroups.
  Suppose $G$ acts on the
  compact metrizable space $M$.
  Let $M'$ be obtained from $M$ by removing all isolated  points. 
  Say that $(G,\mc{P})$ acts \emph{weakly geometrically finitely} on
  $M$ (or that the action is \emph{WGF}) if all of the following occur:
  \begin{enumerate}
  \item   $(G,\mc{P}_\infty)$ acts geometrically finitely on
    $M'$.
  \item Each $P\in \mc{P}_f$ is equal to $\Stab_G(p)$ for some $p\in
    M\setminus M'$.
  \item Every $p\in M\setminus M'$ is a finite parabolic point, with
    stabilizer conjugate to exactly one element of $\mc{P}_f$.
  \end{enumerate}

  Finally, if $X$ is a $\delta$--hyperbolic $G$--space, then we say
  that the action of $(G,\mc{P})$ on $X$ is \emph{WGF} whenever the
  action of $(G,\mc{P})$ on $\partial X$ is WGF.
\end{definition}

\begin{proposition}
  Let $(G,\mc{P})$ be relatively hyperbolic.  Then the action of
  $(G,\mc{P})$ on its cusped space is WGF.

  Conversely, if $(G,\mc{P})$ has a WGF action on a space $M$, then
  $(G,\mc{P})$ is relatively hyperbolic.
\end{proposition}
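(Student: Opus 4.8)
The plan is to reduce each implication to the corresponding classical statement for the \emph{infinite} part $\mc{P}_\infty$ of the peripheral structure, and then to bookkeep the finite peripherals separately, using that they contribute only isolated (finite parabolic) points to the boundary.

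For the forward direction, recall that the cusped space $X=X(G,\mc{P},S)$ is $\delta$--hyperbolic and $G$ acts on it properly (this is the construction underlying \cite[Section 3]{AGM08}). The first step is to identify the isolated points of $\partial X$: a combinatorial horoball attached over a coset $gP$ with $P\in\mc{P}_f$ is (uniformly) quasi-isometric to a ray, so it has a single limit point $\xi_{gP}\in\partial X$, which is isolated, with $\Stab_G(\xi_{gP})=gPg^{-1}$ finite; conversely every isolated point arises this way, because the complement $M':=\partial X\setminus(\text{isolated points})$ is $G$--equivariantly identified with the Bowditch boundary of $(G,\mc{P}_\infty)$, a compactum with no isolated points. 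By the standard theory of relatively hyperbolic boundaries, $(G,\mc{P}_\infty)$ acts geometrically finitely on $M'$ with the conjugates of the elements of $\mc{P}_\infty$ being exactly the maximal parabolic stabilizers; this is WGF condition (1). WGF conditions (2) and (3) then follow at once from the description above, since $gP\mapsto\xi_{gP}$ is a $G$--equivariant bijection from cosets of elements of $\mc{P}_f$ onto isolated points, and $\mc{P}$ contains no two conjugate subgroups.

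For the converse, suppose $(G,\mc{P})$ acts WGF on a compactum $M$. By definition $(G,\mc{P}_\infty)$ acts geometrically finitely (as a convergence group) on $M'=M\setminus(\text{isolated points})$. Yaman's characterization of relatively hyperbolic groups, in the generality needed here (torsion allowed, peripherals not assumed finitely generated, as in \cite{Hruska, mp2}), then shows that $(G,\mc{P}_\infty)$ is relatively hyperbolic with $M'$ equivariantly homeomorphic to its Bowditch boundary; equivalently, the cusped space $X_\infty=X(G,\mc{P}_\infty,S)$ is $\delta$--hyperbolic. It remains to reinstate the finite peripherals: passing from $X_\infty$ to $X=X(G,\mc{P},S)$ only attaches horoballs over cosets of groups in $\mc{P}_f$, and a combinatorial horoball over a finite set is uniformly quasi-isometric to a ray, so $X_\infty\hookrightarrow X$ is a quasi-isometric embedding and $X$ is again $\delta$--hyperbolic; each $P\in\mc{P}_f$ stabilizes the isolated boundary point of the corresponding horoball, and these points are exactly the ones supplied by WGF conditions (2),(3). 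Hence $(G,\mc{P})$ is relatively hyperbolic.

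The main obstacle is the converse: applying Yaman's theorem in precisely the generality used in this appendix, and the bookkeeping that the finite parabolic points provided by the WGF hypothesis correspond bijectively to the cosets of $\mc{P}_f$ and do not interfere with the geometrically finite action on $M'$. Concretely, one must check that deleting the isolated points neither removes nor merges limit points of $(G,\mc{P}_\infty)$ and that the conical/bounded-parabolic dichotomy on $M'$ is unaffected; this is exactly where the WGF hypotheses that isolated points have finite stabilizers and that $\mc{P}_\infty$ consists precisely of the infinite peripheral subgroups are used.
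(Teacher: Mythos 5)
Your proof is correct and follows essentially the same route as the paper: reduce both implications to the $\mc{P}_\infty$ case via the known equivalence (Hruska/Yaman), identify the isolated boundary points with the limit points of horoballs over finite peripherals, and observe that attaching horoballs over finite graphs changes $X_\infty$ to $X$ only by adjoining rays, hence preserves Gromov hyperbolicity and the identification of $\partial X_\infty$ with the non-isolated part of $\partial X$. Your explicit appeal to Yaman's characterization in the converse is implicitly present in the paper's citation of Hruska; there is no substantive difference in the argument.
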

\begin{proof}
  For $\mc{P}=\mc{P}_\infty$, a proof of the equivalence can be found
  in \cite{Hruska}.

  The pair $(G,\mc{P})$ is relatively hyperbolic if and only if
  $(G,\mc{P}_\infty)$ is relatively hyperbolic.
  Indeed, for any generating set $S$ of $G$, the cusped space
  $X_\infty=X(G,\mc{P}_\infty,S)$ quasi-isometrically embeds into
  $X=X(G,\mc{P},S)$.  The complement $X\setminus X_\infty$ is composed
  of combinatorial horoballs based on finite graphs.  Thus
  $X$ is quasi-isometric to $X_\infty$ with $\#(\mc{P}_f)$ rays
  attached to vertex of the Cayley graph of $G$.

  Suppose that $(G,\mc{P})$ is relatively hyperbolic, so that
  $X$ is Gromov hyperbolic.  Then $X_\infty$ is also Gromov
  hyperbolic, and $\partial X'$ can be canonically identified with
  $\partial X_\infty$.    Thus $G$ acts geometrically finitely on
  $\partial X'$.
  Moreover, the isolated points of $\partial X$
  are in one to one correspondence with the left cosets of elements of
  $\mc{P}_f$; the point corresponding to $tP$ has (finite) stabilizer
  equal to $tPt^{-1}$.  Thus $(G,\mc{P})$ acts weakly geometrically
  finitely on $X$.

  Conversely, if $(G,\mc{P})$ has a WGF action on $M$, then
  $(G,\mc{P}_\infty)$ has a geometrically finite action on $M'$, so
  $(G,\mc{P}_\infty)$ is relatively hyperbolic.  Since
  $\mc{P}\setminus\mc{P}_\infty$ is composed of finite subgroups of
  $G$, the pair $(G,\mc{P})$ is also relatively hyperbolic.
\end{proof}
\begin{remark}
  Given that $(G,\mc{P})$ is relatively hyperbolic if and only if
  $(G,\mc{P}_\infty)$ is relatively hyperbolic, it is often convenient
  to simply ignore the possibility of finite parabolics, as for
  example in \cite{Hruska}.  In the present setting it is important to
  keep track of them, as otherwise we would not get uniform control of
  the geometry of cusped spaces of quotients, as in Theorem
  \ref{oldstuff} below.
\end{remark}

Suppose that $X$ and $Y$ are $\delta$-hyperbolic $G$-spaces.  A
$G$-equivariant quasi-isometry from $X$ to $Y$ induces a
$G$-equivariant homeomorphism from $\partial X$ to $\partial Y$.
Since the property of being a (weakly) geometrically finite action on a $\delta$-hyperbolic space is defined in terms of the boundary, we have the following result.

\begin{lemma}
Suppose that $(G,\mc{P})$ admits a WGF action on a $\delta$-hyperbolic space $X$ (as in Definition \ref{d:geom fin}) and that $f \co X \to Y$ is a $G$-equivariant quasi-isometry to another $\delta$-hyperbolic $G$-space.  Then the action of $G$ on $Y$ is WGF.
\end{lemma}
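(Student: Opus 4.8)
The plan is to push everything to the Gromov boundary, where the WGF condition is defined, and exploit the fact that a $G$-equivariant quasi-isometry of $\delta$-hyperbolic spaces induces a $G$-equivariant homeomorphism of boundaries conjugating the two boundary actions.

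First I would invoke the standard fact that a quasi-isometry $f\co X\to Y$ between $\delta$-hyperbolic (geodesic) spaces extends canonically to a homeomorphism $\partial f\co \partial X\to\partial Y$, depending only on $f$ up to bounded distance. Since $f$ intertwines the $G$-actions on $X$ and $Y$ (whether exactly, or only up to uniformly bounded error, the standard argument still applies), naturality of this construction gives $\partial f\circ g=g\circ\partial f$ for every $g\in G$; thus $\partial f$ is a $G$-equivariant homeomorphism conjugating $G\curvearrowright\partial X$ to $G\curvearrowright\partial Y$. In particular $\partial f$ carries isolated points to isolated points, hence restricts to a $G$-equivariant homeomorphism of $(\partial X)'$ onto $(\partial Y)'$ (the spaces obtained by deleting isolated points), and it preserves all point stabilizers: $\Stab_G(\partial f(\xi))=\Stab_G(\xi)$ by equivariance.

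Next I would observe that every clause in the definition of a WGF action is either purely topological or is a dynamical invariant of the convergence action, and so transfers verbatim under the conjugacy $\partial f$. The convergence-group property is equivalent to proper discontinuity of the induced action on the space of distinct unordered triples, and $\partial f$ induces an equivariant homeomorphism on triple-spaces, so it is preserved; conical limit points are defined by the existence of suitable sequences $\{g_i\}$, parabolic points by the structure of their (infinite) stabilizers and fixed-point sets, and the bounded-parabolic condition by cocompactness of the stabilizer on the complement of the point — all of these are carried from $\partial X$ to $\partial Y$ by the $G$-equivariant homeomorphism $\partial f$. Likewise a finite parabolic point (isolated with finite stabilizer) is sent to a finite parabolic point. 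Therefore, since by hypothesis $(G,\mc{P})$ acts WGF on $X$, i.e.\ on $\partial X$, applying $\partial f$ shows that $(G,\mc{P}_\infty)$ acts geometrically finitely on $(\partial Y)'$, that each $P\in\mc{P}_f$ is the stabilizer of an isolated point of $\partial Y$, and that every isolated point of $\partial Y$ is a finite parabolic point with stabilizer conjugate to exactly one element of $\mc{P}_f$. Hence $(G,\mc{P})$ acts WGF on $\partial Y$, which by definition means it acts WGF on $Y$.

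There is no serious obstacle here: the lemma is essentially the formal consequence of the discussion immediately preceding it. The only point requiring a word of care is the precise meaning of ``$G$-equivariant quasi-isometry''—one should note that exact equivariance of $f$ (or equivariance up to bounded displacement) suffices to make the induced boundary map genuinely $G$-equivariant—and one should record that $\partial X$ (hence $\partial Y$) has at least three points in the cases of interest, so that the convergence-group formalism of $\Theta(\cdot)$ applies; the degenerate elementary cases are trivial.
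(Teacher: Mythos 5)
Your argument is precisely the one the paper gives: the $G$-equivariant quasi-isometry induces a $G$-equivariant boundary homeomorphism, and since the WGF property is by definition a property of the boundary action, it transfers automatically. The paper states the lemma without further proof because it records exactly this observation in the preceding paragraph; your write-up simply spells out the routine checks.
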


\begin{remark}\label{free cusped space}
  In the presence of $2$--torsion, the action of $G$ on the cusped space
  $X(G,\mc{P},S)$ may not be free, though it is always free on the
  vertex set.
  In what follows, it is convenient to replace the graph
  $X(G,\mc{P},S)$ as defined in \cite{rhds} with a graph having the
  same vertex set, but on which $G$ acts freely.  Since $G$ acts
  freely on the vertex set already, we can modify $X(G,\mc{P},S)$ to a
  graph with a free $G$--action by replacing each edge by two edges,
  corresponding to the two choices of orienting the edge.  This does
  not change the coarse geometry of $G$ or any of the statements we
  apply from \cite{rhds,AGM08,MM-P,Hruska}.  
  From now on, when
  we refer to the cusped space, we will assume it has been
  modified as just explained to make the action free.
\end{remark}

\subsection{Height from multiplicity}

\begin{definition} \label{d:fqc}
  $(H,\mc{D})<(G,\mc{P})$ is \emph{fully quasiconvex} if it is
  relatively quasiconvex and whenever $gDg^{-1}\cap P$ is infinite,
  for $D\in \mc{D}, P\in \mc{P}$, then $[P:gDg^{-1}]<\infty$.
\end{definition}
In this section, $(G,\mc{P})$ is relatively hyperbolic, and
$(H,\mc{D})$ is a fully quasiconvex subgroup.  We allow the
possibility that $\mc{P}$ and $\mc{D}$ are empty.

If $\mc{P}$ (and therefore $\mc{D}$) is empty, we take $\Gamma$ to be
any graph on which $G$ acts freely and cocompactly, and choose
$\tilde{*}$ to be some arbitrary vertex.
Otherwise, we take $\Gamma$ to be the $1$--skeleton of a cusped
space $X(G,\mc{P},S)$, modified to have a free $G$--action as in
Remark \ref{free cusped space}.
\footnote{Below, when applying Theorem \ref{oldstuff} to a quotient of $\Gamma$ by $G$, we will consider a graph which is the cusped space with some extra loops attached (in an equivariant way) to some vertices.  It is straightforward to check that our arguments work as written for this slightly different space.  In fact, with only a little extra work, one can take $\Gamma$ to be any graph with a free WGF $G$-action, but we decided to stick with the more restrictive setting in the interests of brevity.}
In this case $\Gamma$ contains a Cayley graph for $G$, and we take
$\tilde{*}=1\in G\subset\Gamma$. 

\begin{definition} \label{d:R-hull}
Let $R\geq 0$.  An {\em $R$-hull for $H$ acting on $\Gamma$} is a connected
$H$-invariant sub-graph
$\tilde{Z}\subset \Gamma$ so that all of the following hold.
\begin{enumerate}
\item\label{contains_basepoint} $\tilde{*}\in \tilde{Z}$.
\item\label{contains_geodesics} If $\gamma$ is a geodesic in $\Gamma$ with endpoints in the
  limit set of $H$, then the $R$--neighborhood of $\gamma$ is
  contained in $\tilde{Z}$.
\item\label{containsB} If $\mc{P}$ is nonempty and $B$ is a horoball of $\Gamma$ whose
  stabilizer in $H$ is infinite, then $B'\subset \tilde{Z}$ where $B'$
  is some horoball nested in $B$.
\item\label{WGF} The action of $(H,\mc{D})$ on $\tilde{Z}$ is WGF.
\end{enumerate}
\end{definition}

Let $\tilde{Z}$ be an $R$-hull for $H$ acting on $\Gamma$, let $Z =
\tilde{Z} / H$ be the quotient of $\tilde{Z}$ by the $H$--action, and
let $Y = \Gamma / G$ be the quotient of $\Gamma$ by the $G$--action.  
If we let $*_H\in Z$ and $*\in Y$ be the images of $\tilde{*}$,
we obtain canonical surjections
$s\co\pi_1(Z,*_H)\to H$ and $s\co \pi_1(Y,*)\to G$.
Moreover the canonical  map
\[ i: Z\to Y \]
which is the composition of the inclusion $Z \hookrightarrow \Gamma / H$ with the quotient map $\Gamma / H \to Y = \Gamma / G$
agrees with the inclusion of $H$ into $G$.

\begin{definition}\label{Sn}
Let $n>0$, and define the following subset of $Z^n$:
\begin{equation}
  S_n = \{(z_1,\ldots,z_n)\mid i(z_1)=\cdots=i(z_n)\}\setminus\Delta
\end{equation}
where $\Delta=\{(z_1,\ldots,z_n)\mid z_i=z_j\mbox{ for some }i\neq j\}$ is the ``fat diagonal'' of $Z^n$.  
Let  $s:\pi_1(Z,*_H)\to H$ be the canonical surjection.  
Let $\varpi_1,\ldots,\varpi_n$ be the $n$
projections of $S_n$ to $Z$.
\end{definition}

(In Stallings' language \cite{Stallings83}, $S_n$ is that part of the \emph{pullback} of
$n$ copies of $i\co Y\to Z$ which lies outside $\Delta$.)

Let $C$ be a component of $S_n$, with a choice of basepoint
$p=(p_1,\ldots,p_n)$.  For $i \in \{ 1, \ldots , n \}$ define maps 
$\tau_{i,C} \co \pi_1(C) \to H$ as follows:

Choose a maximal tree $T$ in $Z$.  For each vertex $v$ of $Z$, the
tree gives a canonical path $\sigma_v$ from $*_H$ to $v$, allowing the
fundamental groups of $Z$ at different basepoints to be identified.
To simplify notation define $\sigma_i = \sigma_{p_i}$.
Now, the map $\varpi_i \co C \to Z$ induces a well-defined map
$(\varpi_i)_* \co \pi_1(C,p) \to \pi_1(Z,*_H)$, taking a loop $\gamma$ based
at $p\in C$ to the loop $\sigma_i\gamma\bar\sigma_i$.
We define $\tau_{C,i} = s \circ (\varpi_i)_* \co \pi_1(C,p) \to H$.  
Since $H$ acts on $\tilde{Z}$ by covering translations, the map $s$ can be seen by lifting paths starting and finishing at the basepoint in the usual way.  Once we've used the path in the maximal tree to make based loops in $C$ map to paths in $Z$ starting and finishing at the basepoint $*_H$, the same is true of the maps $\tau_{C,i}$.

\begin{definition}\label{d:multiplicity}
  The \emph{multiplicity} of $Z\to Y$ is the largest $n$ so that $S_n$
  contains a component $C$ so that for all $i \in 1, \ldots , n$ the group
  \[	\tau_{C,i}\left(\pi_1(C)\right)	\]
  is an infinite subgroup of $H$.
\end{definition}

\begin{lemma}\label{conjugateimages}
For a fixed component $C$ of $S_n$, the groups
\[A_i=\tau_{C,i}(\pi_1(C,p))<H\] are conjugate in $G$.  Specifically, if
$\sigma_i$ are defined as above, and $g_{i,j}$ is represented by the
loop $i\circ\sigma_i\cdot i\circ\bar\sigma_j$, then 
$g_{i,j}A_j g_{i,j}^{-1}=A_i$.
\end{lemma}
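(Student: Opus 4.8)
The plan is to prove the displayed identity $g_{i,j}A_jg_{i,j}^{-1}=A_i$ directly, by a diagram chase in fundamental groups whose only substantive input is the defining property of $S_n$: the composites $i\circ\varpi_\ell\co S_n\to Y$ all coincide, since $(z_1,\dots,z_n)\in S_n$ means $i(z_1)=\dots=i(z_n)$.

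First I would fix a loop $\gamma$ in $C$ based at $p=(p_1,\dots,p_n)$. Restricting the equality $i\circ\varpi_i=i\circ\varpi_j$ to the component $C$, the two loops $i\circ\varpi_i(\gamma)$ and $i\circ\varpi_j(\gamma)$ are literally the same loop $\delta$ in $Y$, now based at the common point $i(p_i)=i(p_j)$. I would then transport everything into $\pi_1(Y,*)$. Let $i_*\co\pi_1(Z,*_H)\to\pi_1(Y,*)$ be induced by $i$. Since both surjections $s$ are obtained by lifting based loops through the free $G$--actions (the description recalled just before Definition~\ref{d:multiplicity}), they fit into a commuting square with $i_*$ and the inclusion $H\hookrightarrow G$. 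Applying $i_*$ to the loop $\sigma_i\cdot\varpi_i(\gamma)\cdot\bar\sigma_i$ representing $(\varpi_i)_*(\gamma)$ and chasing this square, the image in $G$ of $\tau_{C,i}(\gamma)\in H$ equals $s\big([\,i\sigma_i\cdot\delta\cdot i\bar\sigma_i\,]\big)$, and likewise $\tau_{C,j}(\gamma)$ has image $s\big([\,i\sigma_j\cdot\delta\cdot i\bar\sigma_j\,]\big)$; the two expressions differ only in how the basepoint of $\delta$ is moved back to $*$.

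Finally I would compare the two expressions. Inserting the null-homotopic segment $i\bar\sigma_j\cdot i\sigma_j$ (twice) gives, in $\pi_1(Y,*)$,
\[
  [\,i\sigma_i\cdot\delta\cdot i\bar\sigma_i\,]
  = [\,i\sigma_i\cdot i\bar\sigma_j\,]\cdot[\,i\sigma_j\cdot\delta\cdot i\bar\sigma_j\,]\cdot[\,i\sigma_j\cdot i\bar\sigma_i\,],
\]
whose outer factors are exactly $g_{i,j}$ and $g_{i,j}^{-1}$. Applying the homomorphism $s\co\pi_1(Y,*)\to G$ yields $\tau_{C,i}(\gamma)=g_{i,j}\,\tau_{C,j}(\gamma)\,g_{i,j}^{-1}$ in $G$, and since $\gamma\in\pi_1(C,p)$ was arbitrary we conclude $A_i=g_{i,j}A_jg_{i,j}^{-1}$, so in particular all the $A_i$ are conjugate in $G$. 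I do not anticipate a real obstacle: the argument is pure bookkeeping, and the only point needing care is the consistent use of the maximal tree $T$ and the paths $\sigma_v$ when changing basepoints, together with the compatibility of the two maps $s$ with $i_*$ and with $H\hookrightarrow G$ — which is immediate from the path-lifting description of $s$.
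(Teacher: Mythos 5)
Your proposal is correct and takes essentially the same approach as the paper; the paper's proof is just terser, leaving to the reader the observation that $i\circ\varpi_i|_C = i\circ\varpi_j|_C$ (which you state explicitly) and the elementary base-point bookkeeping of inserting the null-homotopic segment $i\bar\sigma_j\cdot i\sigma_j$.
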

\begin{proof}
As in the above discussion, the basepoint of $C$ is
$p=(p_1,\ldots,p_n)$, and for each $i$ there is a canonical
path $\sigma_i$ in $T\subset Z$ connecting the basepoint $*_H$ of $Z$
to $p_i$.  We also recall the map $i\co Z\to Y$ takes $*_H$ to $*$
and induces the inclusion $H<G$ in the sense that the diagram
\cd{ \pi_1(Z,*_H)\ar[r]^i\ar[d]^s & \pi_1(Y,*)\ar[d]^s \\ H \ar[r] & G }
commutes, where the vertical arrows are the canonical surjections.

Let $q = i(p_1)=\cdots=i(p_n)$.
The paths $i\circ\sigma_i$ all begin at $*$ and end at $q$, so any
concatenation of two of them gives a loop in $Y$ representing
an element $g_{i,j}$
of $G$ conjugating one of the images of $\pi_1(C,p)$ to another.
Precisely, for $i,j\in \{1,\ldots,n\}$ we get an element
$g_{i,j}$ represented by $i\circ\sigma_i\cdot i\circ\bar\sigma_j$ so that
\begin{equation}
  g_{i,j}\tau_{C,j}(\alpha)g_{i,j}^{-1}=\tau_{C,i}(\alpha), \forall\alpha\in\pi_1(C,p).
\end{equation}
\end{proof}

We aim in this section for the following:

\begin{theorem}\label{heightmultip}
  Let $R$ be bigger than the quasi-geodesic stability constant
  $D=D(\delta)$ specified in the proof below.
  With the above notation, the height of $H$ in $G$ is equal to the
  multiplicity of $Z\to Y$.
\end{theorem}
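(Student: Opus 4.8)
The plan is to prove two inequalities: that the multiplicity of $Z\to Y$ is at most the height of $H$ in $G$, and conversely. Throughout we use that $\Gamma$ carries a free $G$--action (Remark \ref{free cusped space}), so $H$ and each $g_iHg_i^{-1}$ act freely on $\Gamma$, and the description of height from Definition \ref{peripheral}: the height of $H$ in $G$ is the largest $n$ admitting essentially distinct $g_1=1,g_2,\dots,g_n$ with $\bigcap_{i=1}^{n} g_iHg_i^{-1}$ infinite.

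\emph{Multiplicity $\le$ height.} Let $C$ be a component of $S_n$ with basepoint $p=(p_1,\dots,p_n)$ such that $A_i:=\tau_{C,i}(\pi_1(C,p))$ is infinite for every $i$. By Lemma \ref{conjugateimages} there are $g_i:=g_{i,1}\in G$ (so $g_1=1$) with $g_iA_ig_i^{-1}=A_1$; hence $A_1\le g_iHg_i^{-1}$ for all $i$, so the infinite group $A_1$ is contained in $\bigcap_{i=1}^{n} g_iHg_i^{-1}$. It remains to check that $g_1,\dots,g_n$ are essentially distinct. Unwinding the definition of $s\co\pi_1(Z,*_H)\to H$ and of the maps $\varpi_i$ (using that lifts of $p_i$ and $p_j$ to $\tilde Z\subseteq\Gamma$ differ by an element of $H$ exactly when $p_i=p_j$ in $Z$, and that $G$ acts freely on $\Gamma$), one sees that $g_iH=g_jH$ would force $p_i=p_j$, contradicting $C\subseteq S_n\setminus\Delta$. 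Thus the height of $H$ in $G$ is at least $n$.

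\emph{Height $\le$ multiplicity.} Suppose $g_1=1,g_2,\dots,g_n$ are essentially distinct and $L:=\bigcap_{i=1}^{n} g_iHg_i^{-1}$ is infinite. Since an intersection of relatively quasiconvex subgroups of a relatively hyperbolic group is relatively quasiconvex, $L$ is relatively quasiconvex with $\Lambda L\subseteq\bigcap_i g_i\Lambda H=\bigcap_i\Lambda(g_iHg_i^{-1})$, and $\Lambda L$ is either a set with at least two points or (if $L$ is parabolic) a single bounded parabolic point. In the first case, each $g_i\tilde Z$ --- being an $R$--hull for $g_iHg_i^{-1}$ --- contains, by Definition \ref{d:R-hull}\eqref{contains_geodesics}, the $R$--neighborhood of every geodesic with endpoints in $\Lambda(g_iHg_i^{-1})\supseteq\Lambda L$; since $R\ge D(\delta)$ and $D(\delta)$ is the thin--triangles/quasigeodesic stability constant, the quasiconvex hull of $\Lambda L$ lies inside $\bigcap_{i=1}^{n} g_i\tilde Z$. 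In the parabolic case one uses Definition \ref{d:R-hull}\eqref{containsB} instead, together with full quasiconvexity (Definition \ref{d:fqc}), to place inside each $g_i\tilde Z$ a horoball nested in the horoball fixed by $L$, on which $L$ acts cocompactly. In either case we obtain a connected, $L$--invariant subgraph $\tilde C\subseteq\bigcap_i g_i\tilde Z$ on which $L$ acts freely. Put $C:=\tilde C/L$. For each $i$, $g_i^{-1}\tilde C\subseteq\tilde Z$ is invariant under $g_i^{-1}Lg_i\le H$, and the induced map $C\cong g_i^{-1}\tilde C/(g_i^{-1}Lg_i)\to\tilde Z/H=Z$ is the $i$--th coordinate of a well-defined map $C\to Z^n$ whose image lies in $S_n$: the common image in $Y$ is that of $\tilde C\to\Gamma\to Y$, and the image misses $\Delta$ precisely because the $g_i$ are essentially distinct (by the same freeness computation as above). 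Passing to the component $C'$ of $S_n$ containing this image, the regular covering $\tilde C\to C$ with deck group $L$ gives $\pi_1(C)\twoheadrightarrow L$, and compatibility of the projections yields $\tau_{C',i}(\pi_1(C'))\supseteq\tau_{C,i}(\pi_1(C))=g_i^{-1}Lg_i$, which is infinite. Hence the multiplicity of $Z\to Y$ is at least $n$, and combining the two inequalities proves the theorem.

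The main obstacle is the geometric step in the second inequality: exhibiting one connected $L$--equivariant subgraph that sits inside all $n$ translates $g_i\tilde Z$ simultaneously and carries the full dynamics of $L$. This is exactly what forces the hypothesis $R>D(\delta)$, with $D(\delta)$ the constant controlling how far the quasiconvex hull of $\Lambda L$ can stray from the geodesics whose $R$--neighborhoods the $R$--hull axioms supply, and it is where the loxodromic and bounded-parabolic cases must be handled separately (the latter relying on condition \eqref{containsB} of Definition \ref{d:R-hull} and on full quasiconvexity to control parabolic intersections). By contrast, the coset bookkeeping --- identifying points of $Z$ with cosets of $H$ via the free $G$--action on $\Gamma$, and matching the conjugators of Lemma \ref{conjugateimages} with essential distinctness --- is routine.
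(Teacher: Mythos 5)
Your proof is correct and follows the same basic architecture as the paper's: two inequalities, the same dichotomy between the loxodromic and parabolic cases, the same use of the $R$--hull axioms, and the same freeness/fat-diagonal argument via Lemma \ref{conjugateimages} for essential distinctness.

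The one place you depart from the paper is in the ($\text{height} \le \text{multiplicity}$) direction. The paper extracts a single hyperbolic element $a\in J=\bigcap H^{g_i}$ (after passing to a power, so that it has a uniform quasi-geodesic axis), pushes that axis into $\bigcap g_i\tilde{Z}$, and builds a loop in $S_n$ directly from it; the hypothesis $R>D(\delta)$ is there precisely because the axis is only a $D$--neighbor of the geodesics that axiom \eqref{contains_geodesics} of Definition \ref{d:R-hull} controls. You instead take the whole (suitably fattened) join of $\Lambda L$, quotient by all of $L$, and get a regular covering $\tilde{C}\to C$ with deck group $L$, so that each $\tau_{C',i}(\pi_1(C'))$ contains a full conjugate of $L$ rather than just an infinite cyclic subgroup. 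Both work; the paper's version is a bit more economical (one doesn't need to worry about connectivity of the hull, and the role of $R>D$ is completely explicit), while yours is more conceptual and gives a cleaner picture of the component $C'$. Two small points to tighten: (i) when you say ``$g_i\tilde Z$ is an $R$--hull for $g_iHg_i^{-1}$,'' strictly it only satisfies conditions \eqref{contains_geodesics}--\eqref{WGF} of Definition \ref{d:R-hull} (the basepoint condition \eqref{contains_basepoint} refers to $\tilde *$, not $g_i\tilde *$), but that is all you use; (ii) the connectedness of the fattened join of $\Lambda L$ is exactly where $R>D(\delta)$ earns its keep in your version, and that step deserves a sentence rather than being folded into the word ``hull.''
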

\begin{remark} 
It is instructive to contemplate the proof of this
theorem when $G$ is a Kleinian group, and $H$ a geometrically
finite subgroup. Then it is not hard to verify that the multiplicity
of a convex core of $H$ is equal to the height of $H$. In fact,
the arguments in this section are motivated by carrying this
geometric argument over to the broader category of hyperbolic
groups. 
\end{remark}

Before doing the proof, we state and prove a corollary. 
\begin{corollary}\cite{gmrs}
  The height of a quasiconvex subgroup of a hyperbolic group is finite.
\end{corollary}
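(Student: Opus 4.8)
The plan is to derive this as a direct corollary of Theorem \ref{heightmultip}, applied with empty peripheral structures. First I would set $\mc{P}=\emptyset$ on $G$ and $\mc{D}=\emptyset$ on $H$; since $G$ is hyperbolic and $H$ is quasiconvex, both pairs are trivially relatively hyperbolic and $(H,\mc{D})$ is fully quasiconvex in $(G,\mc{P})$, so the hypotheses of Theorem \ref{heightmultip} are met in this degenerate case. Accordingly I would take $\Gamma$ to be a Cayley graph of $G$ with respect to a finite generating set, which carries a free cocompact $G$--action, and fix a base vertex $\tilde{*}$; let $\delta$ be a hyperbolicity constant for $\Gamma$ and fix $R$ larger than the quasi-geodesic stability constant $D(\delta)$ appearing in Theorem \ref{heightmultip}.

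The next step is to produce an $R$--hull $\tilde{Z}$ for $H$ acting on $\Gamma$ whose quotient is compact. Since $\mc{P}$ is empty, there are no horoballs, so condition (\ref{containsB}) of Definition \ref{d:R-hull} is vacuous, and condition (\ref{WGF}) only asks that the free cocompact action of $H$ on $\tilde{Z}$ be WGF, which is automatic. For conditions (\ref{contains_basepoint}) and (\ref{contains_geodesics}) I would take $\tilde{Z}$ to be the $R$--neighborhood in $\Gamma$ of the union of all geodesics with both endpoints in the limit set $\Lambda H\subset\partial G$, enlarged if necessary by a bounded amount so as to be connected and to contain $\tilde{*}$. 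Quasiconvexity of $H$ guarantees this set stays within a bounded neighborhood of an $H$--orbit, so $H$ acts cocompactly on $\tilde{Z}$ and the quotient $Z=\tilde{Z}/H$ is a finite graph; likewise $Y=\Gamma/G$ is a finite graph.

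The crux is then a simple counting observation: the canonical map $i\co Z\to Y$ has finite fibers of uniformly bounded size. Indeed $i$ factors as $Z\hookrightarrow \Gamma/H\to \Gamma/G=Y$, and the fiber over a point of $Y$ is the intersection of the compact graph $Z$ with a single discrete $H$--coset in $\Gamma/H$, hence finite; compactness of $Z$ and $Y$ then yields a uniform bound $N$ on fiber sizes. Consequently $S_n=\emptyset$ for every $n>N$, since by Definition \ref{Sn} a point of $S_n$ consists of $n$ pairwise distinct points of $Z$ with a common image in $Y$. Hence the multiplicity of $Z\to Y$ is at most $N$, and Theorem \ref{heightmultip} identifies this multiplicity with the height of $H$ in $G$, which is therefore finite.

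The only point requiring genuine care is the construction of $\tilde{Z}$: one must check that the naive choice, an $R$--neighborhood of the geodesic hull of $\Lambda H$, can be made connected and base-point-containing while remaining $H$--cocompact, and that it then satisfies all four conditions of Definition \ref{d:R-hull}. This is a standard consequence of quasiconvexity, and once it is in hand the rest of the argument is pure bookkeeping built on Theorem \ref{heightmultip}.
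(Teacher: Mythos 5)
Your proof is correct and follows the same route as the paper: reduce to Theorem \ref{heightmultip} with empty peripheral structures, build an $H$-cocompact $R$-hull inside a Cayley graph, observe that $Z$ and $Y$ are then finite, and conclude $S_n=\emptyset$ for large $n$. The only cosmetic difference is the choice of hull: the paper simply takes $\tilde{Z}=N_{\lambda+5\delta+R}(H)$, the metric neighborhood of the subgroup, whereas you take a neighborhood of the geodesic hull of $\Lambda H$ (which lies within bounded Hausdorff distance of $H$ by quasiconvexity, so the two are equivalent). The paper's choice avoids the extra bookkeeping you flagged about ensuring connectedness and basepoint-containment, since $N_{\lambda+5\delta+R}(H)$ is obviously connected and contains $1=\tilde{*}$.
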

\begin{proof}
  Suppose $H$ is quasiconvex in $G$. Let $\Gamma$ be
  a Cayley graph for $G$, so that $H\subset\Gamma$ is
  $\lambda$--quasiconvex.  It is easy to see that
  $\tilde{Z}=N_{\lambda+5\delta+R}(H)$ is an $R$--hull for $H$.  Since
  $G$ (resp. $H$) acts cocompactly on $\Gamma$ (resp. $\tilde{Z}$),
  the complexes $Z$ and $Y$ are both finite.  Thus $S_n$ is empty for
  large $n$.
\end{proof}

\begin{proof}[Proof of Theorem \ref{heightmultip}]
We first bound multiplicity from below by height, and then conversely.
\medskip

\noindent
  \textbf{(multiplicity $\geq$ height):} Suppose that $H$ has height $\geq n$.  There are
  then $(H,g_2 H,\ldots, g_n H)$ all distinct so that 
  $J=H\cap H^{g_2}\cap\cdots \cap H^{g_n}$ is infinite.
  Since $(G,\mc{P})$ is relatively hyperbolic, every infinite subgroup
  of $G$
  either contains a hyperbolic element or is conjugate into some $P\in
  \mc{P}$.  This follows immediately from the classification of isometries of $\delta$-hyperbolic spaces (see \cite[Section 8.2, p. 211]{gromov:wordhyperbolic}) and from the definition of WGF action. 
  The proof therefore
  breaks up naturally into these two cases.
  \begin{case}
    The intersection $J$ contains a
    hyperbolic element $a$.
  \end{case}
  By replacing $a$ by a power we may suppose that $a$ has a
  $(K,C)$--quasi-geodesic axis $\tilde\gamma_a$, where $K$ and $C$ depend only on
  $\delta$.  Quasi-geodesic stability implies that $\tilde\gamma_a$ lies
  Hausdorff distance at most $D$ from a geodesic, where $D$ depends
  only on $\delta$.  So as long as $R>D$, the geodesic $\tilde\gamma_a$ lies
  in $\tilde{Z}\cap g_2\tilde{Z}\cap\cdots\cap g_n\tilde{Z}$.
  Let $\pi_Z$ be the natural projection from $\tilde{Z}$ to $Z$.
  For $t\in \bR$, define $\gamma_a: \bR\to Z^n$ as follows:
  \[ \gamma_a(t) = \left(\pi_Z( \tilde\gamma_a(t) ), 
                         \pi_Z( g_2^{-1}\tilde\gamma_a(t)),\ldots,
                         \pi_Z( g_n^{-1}\tilde\gamma_a(t))\right) \]
  Since $G$ acts freely and the $g_i$ are essentially distinct,
  $\gamma_a$ misses the diagonal.  Since its coordinates differ only
  by elements of $g$, $\gamma_a$ has image in $S_n$.  Moreover
  projection of $\gamma_a$ to any component gives a loop of infinite
  order in $H$. Thus we've shown that a component of $S_n$ has an 
  element with infinite order projection to $G$, and therefore the multiplicity
  of $H$ is $\geq n$. 
  \begin{case}
    The intersection $J$ is
    conjugate into $P\in \mc{P}$.
  \end{case}
  In this case $J$ preserves some horoball $B$ of $\Gamma$.
  By point \eqref{containsB} in the definition of $R$--hull, there is
  a horoball $B'$ nested inside $B$ so that $B'\subset \tilde{Z}$.  By
  possibly replacing $B'$ with a horoball nested further inside, we have 
  \[B'\subset \tilde{Z}\cap g_2\tilde{Z}\cap\cdots\cap g_n\tilde{Z}.\]
  It follows that 
  \[A = \left\{(\pi_Z(b),\pi_Z(g_2^{-1}(b)),\ldots,\pi_Z(g_n^{-1}(b)))\mid
  b\in B'
   \right\}\]
  lies in some component $C$ of  $S_n$.  Moreover, each
  $\tau_{C,i}(\pi_1(A))<\tau_{C,i}(\pi_1(C))$ is conjugate to $J$, hence infinite.

\medskip
\noindent
  \textbf{(height $\geq$ multiplicity):}  
  Suppose the multiplicity of $Z\to Y$ is $n$.  Let $C\subset S_n$ be
  a component with infinite fundamental group, and let
  $p=(p_1,\ldots,p_n)\in C$.  We define the paths $\sigma_i$ 
  from $*_H$ to $p_i$ as in the discussion before Definition
  \ref{d:multiplicity}.  Recall the homomorphisms
  \[ \tau_{C,i}\co \pi_1(C,p)\to H < G \]
  are defined by
  $\tau_{C,i}([\gamma])=[i\circ(\sigma_i\cdot\gamma_i\cdot\bar\sigma_i)]$,
  for any loop $\gamma=(\gamma_1,\ldots,\gamma_n)$ based at $p$ in
  $C$. According to Lemma \ref{conjugateimages}, if we let $A_i =
  \tau_{C,i}(\pi_1(C,p))$, and $g_{i,j}=[i\circ\sigma_i\cdot
    i\circ\bar\sigma_j]$,
  then
  \[ A_j^{g_{i,j}}=A_i. \]
  In particular, writing $g_i = g_{1,i}$, we have
  \[ H\cap H^{g_2}\cdots \cap H^{g_n} \supseteq A_1 \]
  is infinite.  To establish the height of $H$ is at least $n$, we
  need to show that $(1,g_2,\ldots,g_n)$ are essentially
  distinct. 
 
  Let $\tilde{T}$ be the lift of $T$ to $\Gamma$ which
  includes the point $\tilde{*}$, and let
  $\gamma=(\gamma_1,\ldots,\gamma_i)$ be a loop in $C$ 
  based at $p$.  
  For each $i$, the path $\sigma_i$ has a unique lift to $\tilde{T}$.
  Let $\tilde{\gamma_i}$
  be the unique lift of $\gamma_i$ starting at the terminus of
  $\tilde{\sigma_i}$.
  Then $g_i(\gamma_i)=\gamma_1$.

  Since $p\in C$ lies outside the fat diagonal of $Z^n$, the paths
  $\gamma_1,\ldots,\gamma_n$ are all distinct.  In particular, the
  lifts $\tilde{\gamma_1},\ldots,\tilde{\gamma_n}$ are also distinct.

  Suppose that $(1,g_2,\ldots,g_n)$ are not essentially distinct.
  Then 
  we would have (writing $g_1=1$) $g_j = g_i h$ for some $1\leq
  i<j\leq n$ and some $h\in H$.  But then
  $\tilde\gamma_j=h^{-1}\tilde\gamma_i$.  Projecting back to $Z$ we
  have $\gamma_j = \gamma_i$,  contradicting the fact that
  $\gamma$ misses the fat diagonal of $Z^n$.
\end{proof}

\subsection{Height decreases}

Again, we have the setup: 
$(H,\mc{D})$ is fully quasiconvex in $(G,\mc{P})$.  We now assume that
$\mc{D}$ and $\mc{P}$ are nonempty, and that $\Gamma = X(G,\mc{P},S)$
for some finite generating set $S$ for $G$.
The graph $\Gamma$ is acted on weakly geometrically finitely by $(G,\mc{P})$. 
Moreover given any finite 
generating set $T$ for $H$, there is a $\lambda>0$ and
an $H$--equivariant, proper, $\lambda$--Lipschitz
map (defined in \cite[Section 3]{AGM08})
\[ \check\iota: X(H,\mc{D},T)\to \Gamma \]
$X(H,\mc{D},T)$ with $\lambda$--quasiconvex image.  Indeed, the
existence of such a map is the \emph{definition} of relative
quasi-convexity in \cite{AGM08}.  This definition is shown to be
equivalent to the usual ones in \cite[Appendix A]{MM-P}.

Recall that the cusped space is built by attaching combinatorial
horoballs to a Cayley graph, so there are canonical inclusions
$H\hookrightarrow X(H,\mc{D},T)$ and $G\hookrightarrow \Gamma$.
The map $\check\iota$ extends the natural inclusion map of $H$ into $G$.
\begin{lemma}
  There is some $N$ so that the $N$--neighborhood of
  the image of $\check\iota$ is a $0$--hull for the action of $H$ on
  $\Gamma$. 
\end{lemma}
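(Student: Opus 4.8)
The plan is to take $\tilde{Z}=\mathcal{N}_N\bigl(\check\iota(X(H,\mc{D},T))\bigr)$ for a constant $N$ to be fixed at the end, and to verify the four conditions of Definition \ref{d:R-hull} with $R=0$, together with connectedness and $H$--invariance. The latter two are automatic: $\check\iota$ is $H$--equivariant, so $Q:=\check\iota(X(H,\mc{D},T))$ and all its metric neighborhoods are $H$--invariant; and $X(H,\mc{D},T)$ is connected, so $Q$ and the $N$--neighborhood of $Q$ are connected subgraphs of $\Gamma$. Condition \ref{contains_basepoint} is also immediate, since $\check\iota$ extends the inclusion $H\hookrightarrow G$, so $\tilde{*}=1\in G$ lies in $Q\subseteq\tilde{Z}$. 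For condition \ref{contains_geodesics} I would use that $Q$ is $\lambda$--quasiconvex in the $\delta$--hyperbolic graph $\Gamma$ and that the limit set of $H$ in $\partial\Gamma$ is exactly $\partial Q$; by standard hyperbolic geometry any bi-infinite geodesic of $\Gamma$ with both endpoints in $\partial Q$ lies in the $N_2$--neighborhood of $Q$ for some $N_2=N_2(\lambda,\delta)$, so taking $N\geq N_2$ suffices (there is slack to spare since $R=0$).

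For condition \ref{containsB} I would invoke full quasiconvexity. If $B$ is a horoball of $\Gamma$ based at a coset $gP$, $P\in\mc{P}$, with $\Stab_H(B)=H\cap gPg^{-1}$ infinite, then by the structure theory of relatively quasiconvex subgroups this intersection is, up to conjugacy in $H$, commensurable with some $D\in\mc{D}$, and Definition \ref{d:fqc} forces $[\,gPg^{-1}:H\cap gPg^{-1}\,]<\infty$. The map $\check\iota$ carries the horoball of $X(H,\mc{D},T)$ over the corresponding coset into $B$, and since $D$ is quasidense in $gPg^{-1}$, the combinatorial horoball over $D$ is coarsely dense in $B$ at every depth; as there are only finitely many $H$--orbits of such horoballs, the implied constant $N_3$ is uniform, so for $N\geq N_3$ we obtain $B\subseteq\tilde{Z}$ and may simply take $B'=B$. (It is here that full quasiconvexity is genuinely needed: for a merely quasiconvex $D$, the image would be quasiconvex but not coarsely dense in any sub-horoball of fixed depth.)

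For condition \ref{WGF}, the point is that $\check\iota$ realizes an $H$--equivariant quasi-isometry $X(H,\mc{D},T)\to\tilde{Z}$. It is a quasi-isometric embedding -- this is essentially the definition of relative quasiconvexity via cusped spaces, cf.\ \cite{AGM08},\cite{MM-P},\cite{Hruska} -- and its image is $N$--dense in $\tilde{Z}$ by construction; indeed a $\Gamma$--geodesic from any point $z\in\tilde{Z}$ to a nearest point $q\in Q$ stays within $N$ of $q$, hence inside $\tilde{Z}$, so $Q$ is $N$--dense in $\tilde{Z}$ even for the path metric on $\tilde{Z}$. Since $\tilde{Z}$ is a quasiconvex connected subgraph of the hyperbolic graph $\Gamma$, it is itself hyperbolic, and $(H,\mc{D})$ acts WGF on its cusped space $X(H,\mc{D},T)$; the lemma above stating that a $G$--equivariant quasi-isometry between $\delta$--hyperbolic $G$--spaces carries WGF actions to WGF actions then gives that $(H,\mc{D})$ acts WGF on $\tilde{Z}$.

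Setting $N=\max\{N_2,N_3\}$ (or any larger value) completes the verification that $\tilde{Z}$ is a $0$--hull for the action of $H$ on $\Gamma$. I expect condition \ref{containsB} to be the main obstacle: one must pin down precisely how $\check\iota$ maps the combinatorial horoballs of $X(H,\mc{D},T)$ into those of $\Gamma$, extract coarse density of each such image from full quasiconvexity (finite index of the relevant peripheral subgroups), and check that the horoballs of $\Gamma$ with infinite $H$--stabilizer fall into finitely many $H$--orbits, so that all these constants can be chosen uniformly.
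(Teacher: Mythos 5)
Your proof is correct and follows essentially the same route as the paper: condition (1) is immediate, condition (2) from $\lambda$--quasiconvexity of the image and geodesic stability, condition (3) from full quasiconvexity forcing $[gPg^{-1}:H\cap gPg^{-1}]<\infty$ and hence coarse density of the horoball over $D$ in the horoball $B$, and condition (4) by transporting the WGF action through an $H$--equivariant quasi-isometry $X(H,\mc{D},T)\to\tilde{Z}$. One small imprecision worth fixing: in condition (4) you assert that $\check\iota$ being a quasi-isometric embedding is ``essentially the definition of relative quasiconvexity via cusped spaces,'' but the definition only gives a proper Lipschitz map with quasiconvex image; the quasi-isometric embedding property is an extra consequence of \emph{full} quasiconvexity (the peripheral subgroups of $H$ being finite index in those of $G$ prevents distortion inside the horoballs), which is exactly the point the paper makes there.
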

\begin{proof}
There are four conditions to check.  We will prove that each of them hold for any large enough value of $N$, and then take the maximum of the four lower bounds.

For a number $D \ge 0$ and a subset $A \subset \Gamma$, let $\mc{N}_D(A)$ denote the closed $R$-neighborhood of $A$ in $\Gamma$.  Let $Y_D = \mc{N}_D\left(\check\iota\left(X(H,\mc{D},T)\right) \right)$.

Condition \eqref{contains_basepoint}:
Since $\tilde{*} = 1 \in H \subset G$, we have $\tilde{*} \in Y_D$ for any $D \ge 0$.

Condition \eqref{contains_geodesics}:
Suppose that $\xi_1, \xi_2 \in \Lambda H$, the limit set of $H$ in $\partial \Gamma$, and suppose that $l$ is a geodesic between $\xi_1$ and $\xi_2$.  To satisfy the second condition of Definition \ref{d:R-hull} (with $R = 0$), we need $l$ to be contained in $Y_D$ for large enough $D$.
The points $\xi_1$ and $\xi_2$ are limits of elements of $H$.  Since $Y_0$ is $\lambda$-quasi-convex, a geodesic between any two elements of $H$ is contained in $Y_\lambda$.  It is now straightforward to see that $l$ is contained in $Y_{\lambda + 2\delta}$.

Condition \eqref{containsB}:
Suppose that $B$ is a horoball of $\Gamma$ whose stabilizer in $H$ is infinite.  Algebraically, this gives peripheral subgroups $D$ of $H$ and $P$ of $G$, and $g \in G$ so that $gDg^{-1} \cap P$ is infinite.  
The condition from Definition \ref{d:fqc} ensures that $\left[ P \co gDg^{-1} \right] < \infty$.  This implies that there is some $D_0$ so that $B \subset Y_{D_0}$.  Since there are only finitely many such $D, P$ and $g$, up to the action of $G$, the number $D_0$ may be taken to work for all such horoballs  $B$.

Condition \eqref{WGF}:
The final condition from Definition \ref{d:R-hull} is that $H$ acts weakly 
geometrically finitely on $Y_N$.  This is true for any $N > 0$.  
Note that $Y_N$ is quasi-convex and quasi-isometric to $Y_0$, the
image of $\check\iota$.
Because the peripheral subgroups
of $H$ are finite index in maximal parabolic subgroups of $G$, the map
$\check\iota$ is a quasi-isometric embedding.  (The proof is similar
to the proof that a quasiconvex subgroup of a hyperbolic group is
quasi-isometrically embedded.)  The map $\check\iota$ therefore gives
an $H$-equivariant quasi-isometry between $X(H,\mc{D},T)$ and $Y_N$.
Since $(H,\mc{D})$ acts weakly geometrically finitely on $X(H,\mc{D},T)$, it
follows that $(H,\mc{D})$ acts weakly geometrically finitely on $Y_N$.
\end{proof}

\begin{definition}
  Let $\tilde{Z}_0$ be the $N$--neighborhood of $\mathrm{Im}(\check\iota)$ for $N$
  sufficiently large that $\tilde{Z}_0$ is a $0$--hull.  For $R>0$, let $\tilde{Z}_R$
  be the $R$--neighborhood of $\tilde{Z}_0$.  Clearly $\tilde{Z}_R$ is
  an $R$--hull.  Let $Z_R$ be the quotient of $\tilde{Z}_R$ by the
  $H$--action, and let $Y$ be the quotient of $\Gamma$ by the
  $G$--action, as in the previous section.
\end{definition}

\begin{theorem}\label{oldstuff}
Let $G$ be hyperbolic, $H<G$ quasiconvex, and let $g\in G\setminus H$.
Let $(G,\mc{P})$, $(H,\mc{D})$ be the relatively hyperbolic structures
from Definition \ref{peripheral}.   Let $A$ be a finite set in $G$.

Then for all sufficiently long $H$--fillings $\phi\co G\to
G(N_1,\ldots,N_m)$:
\begin{enumerate}
\item If $K=\ker(\phi)$, then
  $\bar{\Gamma}:=\Gamma / K$ is $\delta'$--hyperbolic for some
  $\delta'$ independent of the filling, and is (except for trivial
  loops) equal to the cusped space for $(\bar{G},\bar{\mc{P}})$.
  In particular $(\bar{G},\bar{\mc{P}})$ is relatively hyperbolic.
\item Let $q\co \Gamma\to \bar{\Gamma}$.  For some $\lambda'$ independent of the filling,
  $\mathrm{Im}(q\circ\check\iota)$ is $\lambda'$--quasiconvex.   Thus
  the induced filling $(\bar{H},\bar{\mc{D}})$ is relatively quasiconvex in
  $(\bar{G},\bar{\mc{P}})$.
  \item \label{eq:induced injective} The induced map $H(K_1,\ldots , K_n) \to G(N_1,\ldots , N_m)$ (described in Remark \ref{rem:induced filling}) is injective.
  \item\label{separateg} $\phi(g)\notin\phi(H)$.
  \item\label{eq:injective on finite} $\phi|_A$ is injective.
  \end{enumerate}
\end{theorem}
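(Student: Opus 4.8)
The plan is to follow the proofs of the corresponding statements in \cite{AGM08}, which established items (1)--(5) under the extra hypothesis that $G$ is torsion-free, and to check that each step survives in the presence of torsion once one works with the modified cusped space $\Gamma = X(G,\mc{P},S)$ of Remark \ref{free cusped space} (on which $G$ acts freely) and with the relatively hyperbolic structures of Definition \ref{peripheral}. The common engine is the combinatorial disk-diagram / Gauss--Bonnet analysis for $\Gamma$: a reduced van Kampen diagram over the filling presentation of $\bar{G} = G(N_1,\ldots,N_m)$ bounding a geodesic bigon or triangle of $\bar\Gamma$ either has controlled cone-area or contains a large embedded subdiagram supported in a single coset of some $N_i$ deep inside a horoball; once the $N_i$ avoid a fixed finite set $B \subset \bigcup\mc{P}$ (the ``sufficiently long'' hypothesis) the latter is impossible, and uniform constants drop out.

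First I would prove (1). Theorem \ref{rhdf} already gives, for sufficiently long fillings, that $(\bar G,\bar{\mc P})$ is relatively hyperbolic and that $\ker(\phi|_{P_i}) = N_i$; combined with the Dehn-filling description of \cite{rhds}, this identifies $\bar\Gamma = \Gamma/K$ with the cusped space $X(\bar G,\bar{\mc P},S)$ up to the doubled edges of Remark \ref{free cusped space}, hence up to trivial loops. What needs genuine work is the \emph{uniformity} of the hyperbolicity constant $\delta'$, and here I would re-run the linear-isoperimetric / thin-bigon argument of \cite{AGM08} directly on $\bar\Gamma$: because $G$ acts freely on the modified $\Gamma$, reduced diagrams behave exactly as in the torsion-free case, the only new relators are the long elements of the $N_i$, and the resulting $\delta'$ does not depend on the filling.

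Items (2) and (3) are then geometric consequences of (1) together with the $0$--hull $\tilde{Z}_0 = \mc{N}_N(\mathrm{Im}(\check\iota))$. For (2), any $\bar\Gamma$--geodesic joining two points of $q(\mathrm{Im}(\check\iota))$ lifts, after surgering along short relators (which again must be shallow in horoballs), to a path of $\Gamma$ that $\lambda$--fellow-travels the $\lambda$--quasiconvex set $\mathrm{Im}(\check\iota)$; this yields a uniform quasiconvexity constant $\lambda'$ and hence the relative quasiconvexity of the induced $(\bar H,\bar{\mc D})$ in $(\bar G,\bar{\mc P})$. For (3), one checks that $q$ restricted to $\tilde{Z}_0$ (equivalently to $X(H,\mc{D},T)$) is precisely the quotient by the kernel of the induced filling $H \to H(K_1,\ldots,K_n)$ of Remark \ref{rem:induced filling}, so that map is injective and $\bar H \cong H(K_1,\ldots,K_n)$. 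Part (4) follows because $g \notin H$ forces $g\cdot\tilde{*} \notin \tilde{Z}_0$ (using that $H$ is quasiconvex and that $N$ was chosen large), and for sufficiently long fillings no $\phi(H)$--translate of $q(\tilde{Z}_0)$ meets $q(g\cdot\tilde{*})$, so $\phi(g) \notin \phi(H)$. Part (5) is immediate from clause (3) of Theorem \ref{rhdf} applied to a finite set containing $A$.

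The main obstacle I anticipate is not a new idea but the bookkeeping in (1)--(2): one must verify that every place where \cite{AGM08} invoked torsion-freeness used it only through freeness of the $G$--action on $\Gamma$ --- which the modification of Remark \ref{free cusped space} restores --- and that the presence of \emph{finite} parabolic subgroups, so that the relevant actions are only WGF rather than geometrically finite, does not disturb the uniform control of the geometry of the quotient cusped spaces. Once that is checked the argument is essentially the one of \cite{AGM08}, with references to \cite{MM-P} for the torsion corrections to the notions of $H$--filling and induced filling.
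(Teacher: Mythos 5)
Your proposal takes essentially the same route as the paper: reduce each clause to the corresponding torsion-free statement in \cite{AGM08} (Propositions 2.3, 4.3, 4.4, 4.5 there) and to \cite{Osin07} for clause (5), then invoke the torsion corrections of \cite[Appendix B]{MM-P} together with the free cusped-space modification of Remark \ref{free cusped space}. One slip worth flagging is in your sketch of (4): it is not true that $g\notin H$ forces $g\cdot\tilde{*}\notin\tilde{Z}_0$ --- the $0$-hull $\tilde{Z}_0$ is a fixed-radius neighborhood of $\mathrm{Im}(\check\iota)$ and hence contains translates $g\cdot\tilde{*}$ by many elements $g$ that lie close to, but outside, $H$ (taking $N$ large makes this worse, not better). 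What is actually established in \cite[Proposition 4.5]{AGM08}, and what the paper cites, is that for a fixed $g\notin H$ the kernel of any sufficiently long $H$-filling avoids the coset $gH$, which is what yields $\phi(g)\notin\phi(H)$; the rest of your outline agrees with the paper's proof.
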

\begin{proof}
It is straightforward to see that $\Gamma / K$ is almost the cusped
space for $(\bar{G},\bar{\mc{P}})$ as advertised, and we leave the
details to the reader.  The extra loops come from horizontal edges in
horoballs between elements in the same $K$--orbit.
When $G$ is torsion-free, the fact that $\delta'$ is independent
of the filling is \cite[Proposition 2.3]{AGM08}.  If one quotes the filling
theorem from \cite{Osin07} (which holds in the presence of torsion) instead of \cite{rhds},
then the proof from \cite{AGM08} works as written.

Again when $G$ is torsion-free, that $(\bar{H},\bar{\mc{D}})$ is $\lambda'$-quasiconvex
for $\lambda'$ independent of the filling is \cite[Proposition 4.3]{AGM08} (with a slightly
different definition of $H$-filling, as discussed above).  That this works in the presence of
torsion is explained in \cite[Appendix B]{MM-P}, and the addition of the loops to the cusped space for $(\bar{G},\bar{\mc{P}})$ as above does not affect this quasiconvexity.

Further (again when $G$ is torsion-free), \cite[Proposition 4.4]{AGM08} says that the induced map
$H(K_1,\ldots , K_n) \to G(N_1,\ldots , N_m)$ is injective.  Once we have noted (as in \cite[Appendix B]{MM-P}) that \cite[Lemma 4.2]{AGM08} holds in the presence of torsion and with the amended definition of $H$-filling, the proof of \cite[Proposition 4.4]{AGM08} works as written.

Now, \cite[Proposition 4.5]{AGM08} implies in the torsion-free case
that $\phi(g)\notin \phi(H)$ for sufficiently large fillings.  Again,
the extension of this proposition in the presence of torsion is
explained in \cite[Appendix B]{MM-P}. 

Finally, \eqref{eq:injective on finite} is part of \cite[Theorem 1.1]{Osin07}.
\end{proof}

\begin{proposition}\label{uniformR}
  For any $R'>0$, there is an $R$ so that $q(\tilde{Z}_{R})$ is an $R'$--hull
  for the action of $\bar{H}$ on $\bar{\Gamma}$, for all sufficiently
  long fillings.
  (In particular, $R$ does not depend
  on the choice of long filling.)
\end{proposition}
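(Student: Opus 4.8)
The plan is to verify directly the four conditions of Definition \ref{d:R-hull} for the subgraph $q(\tilde Z_R)\subseteq\bar\Gamma$ under the $\bar H$--action, with $R$ chosen in terms of the filling--independent constants $\delta'$ and $\lambda'$ supplied by Theorem \ref{oldstuff}. Two elementary observations set this up. First, by Remark \ref{free cusped space} the kernel $K$ acts freely on $\Gamma$, so $q\co\Gamma\to\bar\Gamma=\Gamma/K$ is a covering of graphs; hence for any $A\subseteq\Gamma$ and any $D\ge 0$ one has $\mc{N}_D^{\bar\Gamma}(q(A))=q(\mc{N}_D^{\Gamma}(A))$ (path--lifting for $\subseteq$, $1$--Lipschitzness for $\supseteq$), and in particular $q(\tilde Z_R)=q(\mc{N}_R^{\Gamma}(\tilde Z_0))=\mc{N}_R^{\bar\Gamma}(q(\tilde Z_0))$. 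Second, $q(\tilde Z_R)$ is $\bar H$--invariant, since $\tilde Z_R$ is $H$--invariant. Finally, the composite $q\circ\check\iota\co X(H,\mc D,T)\to\bar\Gamma$ extends the inclusion $\bar H\hookrightarrow\bar G$ and agrees, up to uniformly bounded error and the extra loops of part (1) of Theorem \ref{oldstuff}, with the natural map $X(H,\mc D,T)\to X(\bar H,\bar{\mc D},\bar T)$ followed by $\check\iota$ for the induced filling; by parts (1)--(2) of Theorem \ref{oldstuff} this realizes $q(\tilde Z_0)$, up to a $\bar H$--equivariant quasi--isometry with constants independent of the filling, as a $\lambda'$--quasiconvex subset of $\bar\Gamma$ whose boundary is the limit set $\Lambda\bar H$.

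Condition \eqref{contains_basepoint} is immediate because $\tilde *\in\tilde Z_0$, so $q(\tilde *)\in q(\tilde Z_R)$. For condition \eqref{contains_geodesics}, let $\bar\gamma$ be a geodesic of $\bar\Gamma$ with both endpoints in $\Lambda\bar H$. By $\delta'$--hyperbolicity and $\lambda'$--quasiconvexity of $q(\mathrm{Im}\,\check\iota)$, both uniform, there is a constant $C=C(\delta',\lambda')$ with $\bar\gamma\subseteq\mc{N}_C^{\bar\Gamma}(q(\mathrm{Im}\,\check\iota))\subseteq\mc{N}_C^{\bar\Gamma}(q(\tilde Z_0))$, hence $\mc{N}_{R'}^{\bar\Gamma}(\bar\gamma)\subseteq\mc{N}_{R'+C}^{\bar\Gamma}(q(\tilde Z_0))=q(\tilde Z_{R'+C})$; so any $R\ge R'+C$ suffices for this clause.

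For conditions \eqref{containsB} and \eqref{WGF} we transport the corresponding properties of the $0$--hull $\tilde Z_0$ through $q$. The induced filling of $(H,\mc D)$ is a long filling once the $G$--filling is long enough (the subgroups $K_j\le D_j$ of Remark \ref{rem:induced filling} are of finite index, controlled by the depth of the $N_i$), so by Theorem \ref{rhdf} and the analogue of Theorem \ref{oldstuff} applied to $(H,\mc D)$, the pair $(\bar H,\bar{\mc D})$ acts weakly geometrically finitely on $X(\bar H,\bar{\mc D},\bar T)$, hence on the $\bar H$--equivariantly quasi--isometric spaces $q(\tilde Z_0)$ and $q(\tilde Z_R)=\mc{N}_R^{\bar\Gamma}(q(\tilde Z_0))$ (using the WGF--invariance lemma together with quasiconvexity of $q(\tilde Z_0)$ in the hyperbolic space $\bar\Gamma$). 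This yields \eqref{WGF}. For \eqref{containsB}, a horoball $\bar B$ of $\bar\Gamma$ with infinite $\bar H$--stabilizer corresponds to an infinite member of $\bar{\mc D}$, so $\bar B=q(B)$ for a horoball $B$ of $\Gamma$ whose $H$--stabilizer is infinite; since $\tilde Z_0$ is a $0$--hull there is a horoball $B'$ of $\Gamma$ nested in $B$ with $B'\subseteq\tilde Z_0$, and then $q(B')$ is a horoball of $\bar\Gamma$ nested in $\bar B$ with $q(B')\subseteq q(\tilde Z_0)\subseteq q(\tilde Z_R)$. There are only finitely many $\bar H$--orbits of such $\bar B$, so a single $R$ handles them all.

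Taking $R$ to be the maximum of the bounds produced in the clauses above yields the statement; all of these bounds depend only on $R'$, on $\delta'$ and $\lambda'$, and on the fixed data $\delta,\lambda,N$, so $R$ is independent of the particular (sufficiently long) filling. I expect the main obstacle to be condition \eqref{WGF}: one must be sure that the filling introduces no new parabolic or conical behaviour inside $q(\tilde Z_R)$, which is precisely what the uniform control in Theorem \ref{oldstuff} — uniform hyperbolicity of $\bar\Gamma$, its identification with the cusped space of $(\bar G,\bar{\mc P})$, and uniform quasiconvexity of $\mathrm{Im}(q\circ\check\iota)$ — is designed to provide; the secondary point requiring care is verifying \eqref{containsB} simultaneously for all $\bar H$--orbits of horoballs, using that there are only finitely many of them.
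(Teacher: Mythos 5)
Your proposal is correct and follows essentially the same strategy as the paper: condition \eqref{contains_geodesics} is the substantive one and is handled exactly as in the paper, using the uniform constants $\delta'$ and $\lambda'$ from Theorem~\ref{oldstuff} so that a filling-independent $R$ works, while conditions \eqref{contains_basepoint}, \eqref{containsB}, and \eqref{WGF} are deduced from the relative quasiconvexity of $(\bar H,\bar{\mc D})$ in $(\bar G,\bar{\mc P})$. The paper dispatches the latter three conditions in a single sentence; your write-up spells out the covering-space identity $q(\mc{N}_R(\tilde Z_0))=\mc{N}_R(q(\tilde Z_0))$ and the transport of horoballs and the WGF property through $q$, which is a correct elaboration of the same idea rather than a different route.
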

\begin{proof}
  Let $\gamma$ be a geodesic joining limit points of $\bar{H}$.  The
  $\lambda'$--neighborhood of $W:=q(\mathrm{Im}(\check\iota))$ contains
  $\gamma$, by quasi-convexity.  Thus the $R$--neighborhood of
  $\gamma$ is contained in the $R+\lambda'$--neighborhood of $W$,
  hence in the image of $Z_{R+\lambda'}$.

  The other conditions follow from the relative quasiconvexity of
  $\bar{H}$ in $\bar{G}$.
\end{proof}

Let $G\to \bar{G}$ be a sufficiently long filling to satisfy the
conclusions of Theorem \ref{oldstuff}, so that $\bar{\Gamma}$ is
$\delta'$--hyperbolic, $\mathrm{Im}(q\circ\check\iota)$ is
$\lambda'$--quasiconvex, and so on.

Fix $R'$ bigger than the constant $D(\delta')$ from Theorem
\ref{heightmultip}.  
Then $q(\tilde{Z}_R)$ detects height in any
sufficiently large filling, in a sense which we will describe below.

\begin{lemma}\label{hullquotientembeds}
  For all sufficiently long fillings $\phi\co G\to G(N_1,\ldots,N_m)$,
  if $K = \ker(\phi)$, $K_H = K\cap H$ and $k\in K\setminus K_H$, then 
  $k \tilde{Z}_R\cap\tilde{Z}_R=\emptyset$.
\end{lemma}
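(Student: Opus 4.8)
The plan is to prove the contrapositive in the following form: if $k\in K$ and there is a vertex $w\in\tilde Z_R\cap k\tilde Z_R$, then $k\in H$; since $K\setminus K_H$ is exactly the set of $k\in K$ with $k\notin H$ (as $K_H=K\cap H$), this gives the lemma. The engine is a reduction to the filling theorem already proved: I will produce elements $h_1,h_2\in H$ and an element $f$ lying in a \emph{fixed} finite subset $F\subset G$ — finite because the cusped space $\Gamma$ is locally finite and, by Remark \ref{free cusped space}, $G$ acts freely on it — such that $k=h_1fh_2^{-1}$. Since $k\in K=\ker\phi$ this forces $\phi(f)=\phi(h_1)^{-1}\phi(h_2)\in\phi(H)$. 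The set $F$, hence $F\setminus H$, depends only on $G$, $H$, $\mc P$, $\mc D$ and the quasi-isometry constants of $\check\iota$ (not on the filling), so applying Theorem \ref{oldstuff}.\eqref{separateg} to each of the finitely many elements of $F\setminus H$ and taking the union of the resulting finite sets $B\subset\bigcup\mc P$ — together with a finite set making the remaining conclusions of Theorem \ref{oldstuff} hold — we conclude that for all sufficiently long $H$--fillings no element of $F\setminus H$ is carried into $\phi(H)$. Hence $f\in H$ and so $k=h_1fh_2^{-1}\in H$, the desired contradiction with $k\notin H$. Thus the whole content is the construction of $h_1,h_2,f$.

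To construct them I would exploit that $\tilde Z_R$ is the $R$--neighbourhood of $N_N(\check\iota(X(H,\mc D,T)))$, where $\check\iota$ is an $H$--equivariant quasi-isometric embedding extending the inclusion $H\hookrightarrow G$ and sending horoballs of $X(H,\mc D,T)$ into horoballs of $\Gamma$ (this uses that peripheral subgroups of $H$ are finite index in maximal parabolics of $G$). Fix a depth threshold $L_1$, and split on the depth $\mathfrak d(w)$ of $w$ in $\Gamma$; note $\mathfrak d$ is $G$--invariant, since $G$ permutes horoballs preserving depth, so in particular $\mathfrak d(k^{-1}w)=\mathfrak d(w)$. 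If $\mathfrak d(w)\le L_1$: then $w\in\tilde Z_R$ at bounded depth is forced by the quasi-isometric embedding to lie within a bounded distance $L$ (depending on $L_1$ and the quasi-isometry constants, not on the filling) of $H\subset G\subset\Gamma$, so pick $h_1\in H$ with $d_\Gamma(w,h_1)\le L$; likewise $k^{-1}w\in\tilde Z_R$ has depth $\le L_1$, so there is $h_2\in H$ with $d_\Gamma(k^{-1}w,h_2)\le L$. Then $d_\Gamma(kh_2,h_1)\le 2L$, so $f:=h_1^{-1}kh_2$ satisfies $d_\Gamma(f\cdot 1,1)\le 2L$, and $F$ may be taken to contain $B_\Gamma(1,2L)\cap G$.

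If $\mathfrak d(w)>L_1$, say $w$ lies in the horoball $\mathrm{Horo}(gP_i)$ over a coset $gP_i$: then $w$ being within $R+N$ of $\check\iota(X(H,\mc D,T))$ and deep forces $gP_ig^{-1}\cap H$ to be infinite, and full quasiconvexity of $H$ (Definition \ref{d:fqc}) makes this intersection finite index in $gP_ig^{-1}$; consequently $\tilde Z_R\cap\mathrm{Horo}(gP_i)$ is, up to a uniformly bounded Hausdorff error, the full sub-horoball of $\mathrm{Horo}(gP_i)$ below some uniformly bounded depth, whose ``top'' lies within bounded distance of a coset of a conjugate of $D_j\in\mc D$, which in turn lies within bounded distance of $gP_i$. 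Since a peripheral coset has bounded diameter in the cusped space, there is thus $h_1\in H$ within bounded distance of the point at which the ascent of $w$ out of $\mathrm{Horo}(gP_i)$ meets the thick part. Applying the identical analysis to $k^{-1}w\in\tilde Z_R$ (deep in $\mathrm{Horo}(k^{-1}gP_i)$) and then translating by $k$ produces $h_2\in H$ with $kh_2$ within bounded distance of the same ascent point. As before this yields $f=h_1^{-1}kh_2$ in a fixed finite set, and the reduction of the first paragraph applies.

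I expect the deep case to be the main obstacle. One must make precise, with all constants independent of the filling, the statement that $\tilde Z_R$ meets each horoball of $\Gamma$ in (a uniformly bounded neighbourhood of) a single full sub-horoball whenever the $H$--stabilizer of that horoball is infinite; this is exactly where full quasiconvexity of $H$, the geometry of combinatorial horoballs, and the bounded diameter of peripheral cosets in the cusped space all enter. The shallow case and the final group-theoretic reduction via Theorem \ref{oldstuff} are essentially bookkeeping.
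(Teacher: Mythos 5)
Your argument is in essence the same as the paper's, but you are (correctly) not willing to assert the key finiteness claim without proof. The paper's proof is three sentences: it states that the set $A = \{g\in G : g\tilde Z_R\cap\tilde Z_R\neq\emptyset\}$ is a finite union of left cosets $g_0H\sqcup\cdots\sqcup g_lH$ with $g_0=1$, applies Theorem~\ref{oldstuff} to the finitely many $g_i$ with $i\geq 1$ to conclude $\phi(g_i)\notin\phi(H)$ for sufficiently long fillings, and then observes that $k\in K\cap A$ forces $k\in g_0H=H$, i.e.\ $k\in K_H$. Your reduction is the same one dressed differently: you aim to place $k$ in $HFH$ for a fixed finite $F$, hit the elements of $F\setminus H$ with Theorem~\ref{oldstuff}.\eqref{separateg}, and deduce $f\in H$, hence $k\in H$. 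Note that $A\subset HFH$ (finitely many $H$-double cosets) is formally weaker than the paper's assertion of finitely many left $H$-cosets, but it suffices for the argument since one only needs $\phi(f)\in\phi(H)\Rightarrow f\in H$ for $f$ in a fixed finite set, and that is exactly what applying Theorem~\ref{oldstuff} to each $f\in F\setminus H$ gives.

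Where you go beyond the paper is in sketching a proof of the finiteness. Your shallow case is sound: bounded depth plus membership in $\tilde Z_R$ pins $w$ within a filling-independent distance of $H\subset G\subset\Gamma$ (via the quasi-isometric embedding $\check\iota$ and the definition of $\tilde Z_R$ as a neighborhood of its image), and the same for $k^{-1}w$; then $h_1^{-1}kh_2\in B_\Gamma(1,2L)\cap G$ is finite by local finiteness of the cusped space. Your deep case is where the actual content lies, and you are right to flag it: one needs to use full quasiconvexity (Definition~\ref{d:fqc}) to show that $\tilde Z_R$ meets a horoball $B$ with infinite $H$-stabilizer in (a bounded neighborhood of) a full sub-horoball at a uniformly bounded depth, that there are only finitely many $H$-orbits of such horoballs (because $\mc D$ is finite), and that the ``top'' of each such sub-horoball lies within filling-independent distance of a coset of a peripheral of $H$, hence within bounded distance of $H$ itself. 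As written this is a plausible outline rather than a proof, but the ingredients you invoke are exactly the right ones, and once made precise the argument would recover (and in fact justify) the finiteness the paper takes for granted. In short: same route, with you supplying justification for a step the paper leaves implicit.
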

\begin{proof}
  The set $A = \{g\in G\mid g\tilde{Z}_R\cap\tilde{Z}_R\neq \emptyset\}$
  is a finite union of left cosets of $H$,
  \[ A = \bigsqcup_{i=0}^l g_i H\mbox{, } g_0 = 1.\]
  Applying Theorem \ref{oldstuff} for $g=g_1,\ldots,g=g_l$, we
  conclude that for all sufficiently long fillings, 
  $\phi(g_i)\notin\phi(H)$ for $i>0$.  Equivalently
  $g_ih\notin K$ for any $h\in H$, and any $i>0$.  Thus for $k\in
  K\setminus K_H$, we have $k\notin A$, and so
  $k\tilde{Z}_R\cap\tilde{Z}_R=\emptyset$. 
\end{proof}

Let $\tilde{\bar{Z}}_R$ be the quotient of $\tilde{Z}_R$ by $K_H$, and let $\bar{Z}_R=Z_R$ be the quotient of $\tilde{\bar{Z}}_R$ by
the action of $H$.  By Lemma \ref{hullquotientembeds},
$\tilde{\bar{Z}}_R$ embeds in $\bar{\Gamma}$.
Now we have a commutative diagram,
\cd{
\tilde{Z}_R \ar@(ul,u)[]^H\ar[r] \ar[d] & \Gamma \ar@(u,ur)[]^G\ar[d] \\
\tilde{\bar{Z}}_R\ar[r]\ar@(l,ul)[]^{H/K_H} & \bar{\Gamma}\ar@(ur,r)^{G/K},
}
where the horizontal maps are inclusions and the vertical maps are
quotients by $K_H$ and $K$ respectively.
After taking quotients by the relevant groups we get the diagram,
\cdlabel{ZvbarZ}{
Z_R \ar[d] \ar[r]^i & Y \ar[d]\\
\bar{Z}_R \ar[r]^{\bar\imath} & \bar{Y}}
where the vertical maps are homeomorphisms, and the horizontal maps
are immersions inducing the inclusions $H\to G$ and $\bar{H}\to
\bar{G}$.  As maps, $i$ and $\bar\imath$ are exactly the same, so the sets
$S_n$ and $\bar{S}_n$ are the same.  For each $i \in \{ 1 ,\ldots , n\}$
and each component $C$ of $S_n$ we have maps 
\[	\tau_{C,i} \co \pi_1(C) \to H	,	\]
and
\[	\bar\tau_{C,i} \co \pi_1(C) \to \bar{H}	.	\]
Since the quotient $Z_R = \tilde{Z}_R / H$ can also be thought of as
$\tilde{\bar{Z}}_R / (H/K_H)$,
we see that the homomorphisms
$\bar\tau_{i,C}$ all factor as $\bar\tau_{i,C} = q|_H \circ \tau_{i,C}$.

In particular, if $\gamma$ is a loop
in $\bar{S}_n$ so that $\bar\tau_{C,i}\left( [\gamma ] \right)$ is infinite for each $i \in \{ 1 , \ldots , n \}$ 
then it must be that $\tau_{C,i} \left( [\gamma ] \right)$ is already infinite for each $i$.  Therefore
we have the following result.

\begin{corollary}
  The height of $\bar{H}$ in $\bar{G}$ is at most the height of $H$ in $G$.
\end{corollary}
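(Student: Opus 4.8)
The plan is to combine Theorem \ref{heightmultip}, which identifies the height of a (relatively) quasiconvex subgroup with the multiplicity of an appropriate hull quotient, with the comparison of pullback complexes recorded in diagram \eqref{ZvbarZ}. Throughout we work with a filling $\phi\co G\to\bar G=G(N_1,\ldots,N_m)$ long enough to satisfy the conclusions of Theorem \ref{oldstuff}.

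First I would pin down the parameters so that Theorem \ref{heightmultip} is available on both sides simultaneously. Fix $R$ large enough that $\tilde Z_R$ is an $R$--hull and $R>D(\delta)$; then Theorem \ref{heightmultip} gives that the height of $H$ in $G$ equals the multiplicity of $i\co Z_R\to Y$. On the quotient side, Theorem \ref{oldstuff} tells us $\bar\Gamma$ is $\delta'$--hyperbolic for a $\delta'$ independent of the (sufficiently long) filling, so $D(\delta')$ is likewise a fixed constant; after possibly enlarging $R$, Proposition \ref{uniformR} guarantees that $q(\tilde Z_R)$ is an $R'$--hull for the action of $\bar H$ on $\bar\Gamma$ with $R'>D(\delta')$, uniformly in the filling. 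Hence Theorem \ref{heightmultip} applies downstairs as well: the height of $\bar H$ in $\bar G$ equals the multiplicity of $\bar\imath\co\bar Z_R\to\bar Y$.

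Next I would exploit that, under the homeomorphisms $Z_R\cong\bar Z_R$ and $Y\cong\bar Y$ of \eqref{ZvbarZ}, the immersions $i$ and $\bar\imath$ are literally the same map. Consequently the pullback loci $S_n$ and $\bar S_n$ and their decompositions into components coincide, and for every component $C$ and every index $i$ the homomorphism $\bar\tau_{C,i}\co\pi_1(C)\to\bar H$ factors as $q|_H\circ\tau_{C,i}$, since $Z_R=\tilde Z_R/H$ may be rewritten as $\tilde{\bar Z}_R/(H/K_H)$. From this the conclusion is immediate: if a component $C$ of $\bar S_n=S_n$ has $\bar\tau_{C,i}(\pi_1(C))$ infinite for every $i$, then a fortiori $\tau_{C,i}(\pi_1(C))$ is infinite for every $i$, because a group admitting an infinite quotient is itself infinite. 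Therefore every $n$ witnessing the multiplicity of $\bar Z_R\to\bar Y$ also witnesses the multiplicity of $Z_R\to Y$, so the former multiplicity is at most the latter, and applying Theorem \ref{heightmultip} on both sides yields the desired inequality of heights.

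The only delicate point is the first step: one must choose a single $R$, \emph{independent of the filling}, for which $\tilde Z_R$ detects the height of $H$ in $G$ and $q(\tilde Z_R)$ simultaneously detects the height of $\bar H$ in $\bar G$. This is exactly what the uniformity built into Theorem \ref{oldstuff} (uniform $\delta'$, $\lambda'$) together with Proposition \ref{uniformR} supplies, so once those are in hand the argument is essentially formal.
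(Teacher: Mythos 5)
Your argument matches the paper's own proof: the paper likewise applies Theorem \ref{heightmultip} on both sides (using the uniform $\delta'$ from Theorem \ref{oldstuff} and the simultaneous hull provided by Proposition \ref{uniformR}), identifies $S_n$ with $\bar{S}_n$ through diagram \eqref{ZvbarZ}, and uses the factorization $\bar\tau_{C,i}=q|_H\circ\tau_{C,i}$ to conclude that any component witnessing multiplicity $n$ for $\bar\imath$ already witnesses it for $i$. Your observation that an infinite image forces an infinite source is exactly the final step in the paper's argument.
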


We now specialize to the case that $(H,\mc{D})< (G,\mc{P})$ comes from
a quasiconvex subgroup $H$ of a hyperbolic group $G$, so that $\mc{D}$
is the malnormal core of $H$ and $\mc{P}$ the induced peripheral
structure on $G$.
\begin{theorem}\label{heightgoesdown}
Assume $\mc{P}$ is the peripheral structure induced on $G$ by the
quasiconvex subgroup $H$, and let $G\to G(N_1,\ldots,N_m)$ be a
sufficiently long $H$--filling.
In case every filling kernel $N_i$ has finite index in $P_i$,
the height of $\bar{H}$ in $\bar{G}$ is strictly less than that of
$H$ in $G$.
\end{theorem}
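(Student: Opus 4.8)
The plan is to deduce the strict inequality from Theorem~\ref{heightmultip} by comparing the multiplicities of $i\co Z_R\to Y$ and $\bar\imath\co \bar Z_R\to \bar Y$, exploiting that $S_n=\bar S_n$ for all $n$ and that $\bar\tau_{C,i}=q|_H\circ\tau_{C,i}$. First I would fix constants: $\bar\Gamma$ is $\delta'$--hyperbolic with $\delta'$ independent of the filling by Theorem~\ref{oldstuff}, and (as in the discussion preceding the statement) $R'$ is chosen larger than $D(\delta')$; by Proposition~\ref{uniformR} we may then take $R$ large enough that $R>D(\delta)$ and $q(\tilde Z_R)$ is an $R'$--hull for $\bar H$ acting on $\bar\Gamma$, for all sufficiently long fillings. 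Then Theorem~\ref{heightmultip} identifies the height of $H$ in $G$ with the multiplicity of $i$ and the height of $\bar H$ in $\bar G$ with the multiplicity of $\bar\imath$. Since $i$ and $\bar\imath$ have the same source, target and pullbacks by~\eqref{ZvbarZ}, we have $S_n=\bar S_n$, and since $\bar\tau_{C,i}$ factors through $\tau_{C,i}$, every component witnessing multiplicity $m$ for $\bar\imath$ also witnesses it for $i$; hence the multiplicity of $\bar\imath$ is at most that of $i$. Writing $n\geq 1$ for the height of $H$ in $G$, it therefore suffices to prove that \emph{no} component $C$ of $S_n$ has $\bar\tau_{C,i}(\pi_1(C))$ infinite for every $i$.

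Suppose then that $C\subseteq S_n$ is a component with $A_i:=\tau_{C,i}(\pi_1(C))$ infinite for all $i$ (if no such component exists we are already done). By Lemma~\ref{conjugateimages} the subgroups $A_i$ are pairwise conjugate in $G$, say $g_{i,j}A_jg_{i,j}^{-1}=A_i$; writing $g_i=g_{1,i}$, the ``height $\geq$ multiplicity'' argument in the proof of Theorem~\ref{heightmultip} shows that $(1,g_2,\dots,g_n)$ are essentially distinct and that
\[ A_1\ \leq\ J:=H\cap H^{g_2}\cap\cdots\cap H^{g_n}, \]
so $J$ is infinite. Because $n$ is the height of $H$ in $G$, intersecting $J$ with a further $H^{g}$ making $(1,g_2,\dots,g_n,g)$ essentially distinct always produces a finite group; hence $J$ is a minimal infinite intersection of the type used to build the malnormal core $\mc{D}$. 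Since $J$ is infinite and quasiconvex it is finite index in its commensurator in $H$, which in turn is $H$--conjugate to some $D_k\in\mc{D}$. Thus $J$ is commensurable in $H$ with a conjugate of $D_k$.

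Now the hypothesis that each $N_i$ has finite index in $P_i$ is used. By Remark~\ref{rem:induced filling} there are $c_k\in G$ and $P_{j_k}\in\mc{P}$ with $D_k\leq c_kP_{j_k}c_k^{-1}$ and $K_k=c_kN_{j_k}c_k^{-1}\cap D_k$; since $c_kN_{j_k}c_k^{-1}$ has finite index in $c_kP_{j_k}c_k^{-1}$, the subgroup $K_k$ has finite index in $D_k$, so the image of $D_k$ in $\bar H=H/\ll K_1\cup\cdots\cup K_n\gg$ is a quotient of $D_k/K_k$, hence finite. Then the image of every $G$--conjugate of $D_k$ in $\bar H$ is finite, and as $J$ is commensurable with such a conjugate, $q(J)$ is finite. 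Consequently $\bar\tau_{C,1}(\pi_1(C))=q(A_1)\leq q(J)$ is finite, and the relations $q(g_{1,i})\,q(A_i)\,q(g_{1,i})^{-1}=q(A_1)$ show that $\bar\tau_{C,i}(\pi_1(C))=q(A_i)$ is finite for every $i$. So no component of $S_n$ witnesses multiplicity $n$ for $\bar\imath$; hence the multiplicity of $\bar\imath$, which is at most $n$ and not equal to $n$, is strictly less than $n$, and therefore the height of $\bar H$ in $\bar G$ is strictly less than that of $H$ in $G$.

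The step I expect to be the crux is the commensurability claim: that the intersection $J$ produced by a multiplicity--realizing component of $S_n$ is, precisely because $n$ is the top of the height, commensurable with a conjugate of an element of $\mc{D}$. This is exactly what converts the finiteness of the peripheral quotients $P_i/N_i$ (which is where the finite-index hypothesis enters) into finiteness of the image $q(J)$, and hence into the collapse of the multiplicity. The rest is bookkeeping: verifying that the several ``for all sufficiently long fillings'' thresholds --- from Theorem~\ref{oldstuff}, Proposition~\ref{uniformR}, Lemma~\ref{hullquotientembeds} and the uniformity of $\delta'$ --- can be arranged to hold simultaneously, and that the chosen radius $R$ is large enough for Theorem~\ref{heightmultip} to apply on both levels.
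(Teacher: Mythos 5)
Your proposal is correct and follows essentially the same route as the paper: reduce to multiplicity via Theorem~\ref{heightmultip} (arranging the thresholds of Theorem~\ref{oldstuff} and Proposition~\ref{uniformR}), identify $S_n$ with $\bar S_n$ through diagram~\eqref{ZvbarZ}, and then use the finite-index hypothesis on the filling kernels to collapse whatever realizes the multiplicity $n$, via the malnormal core. The only cosmetic difference is that the paper argues by contradiction at the level of a single infinite-order element $a$ (showing a power of $a$ lies in the filling kernel, so $\bar a$ has finite order), while you argue directly at the subgroup level (showing the minimal infinite intersection $J$ is commensurable with a conjugate of some $D_k$, hence $q(J)$ and therefore each $q(A_i)$ is finite); these are the same idea packaged differently.
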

\begin{proof}
  Suppose that $H$ has height $n$ in $G$ and that, contrary to the
  conclusion, $\bar{H}$ has height $n$ in $\bar{G}$.  
  
Fix $R' > 0$.    By Proposition
  \ref{uniformR}, there is an $R$ so that for any long enough filling
  the set $q(\tilde{Z}_R )$ is an $R'$-hull for the action of $\bar{H}$
  in $\bar{G}$.  We choose $R'$ large enough so that it satisfies the hypotheses
  of Theorem \ref{heightmultip}.  Specifically, we make sure $R' >
  D(\delta')$ for the universal constant of hyperbolicity $\delta'$
  from Theorem \ref{oldstuff}.

  By Theorem
  \ref{heightmultip}, the multiplicity of the map $\bar\iota \co \bar{Z}_R \to \bar{Y}$
  is $n$.  Let $\bar{C}$ be a component of $\bar{S}_n$ (with basepoint $\bar{p} = \left( \bar{p}_1, \ldots , \bar{p}_n \right)$) so that each of the subgroups $\bar{A}_i = \bar{\tau}_{\bar{C},i} \left( \pi_1\left(\bar{C},\bar{p}\right) \right)$ are infinite.\footnote{We add a bar to our notation in the obvious way in the quotient.}
  By Lemma \ref{conjugateimages}, the groups $\bar{A}_i$ are all conjugate in $\bar{H}$, and so there are $\bar{g_{i,j}} \in \bar{G}$
  so that $\bar{g}_{i,j}\bar{A}_j\bar{g}_{i,j}^{-1} = \bar{A}_i$.  Since $G$ is a hyperbolic group, any infinite subgroup contains an infinite
  order element, so let $\bar{a} \in \bar{A}_1$ be such an infinite order element, and suppose that $\gamma_{\bar{a}}$ is a loop in $\bar{C}$ based at $\bar{p}$ so that $\bar{\tau}_{i,j} \left( [ \gamma_{\bar{a}} ] \right)$.
  
  Now, consider the diagram \eqref{ZvbarZ}.  The vertical maps are
  homeomorphisms, and so (as in the above discussion), induce a
  homeomorphism between $\bar{S}_n$ and $S_n$.  Let $C$ be the component of $S_n$ corresponding to $\bar{C}$, let $p$ be the associated basepoint, and let $\gamma_a$ be the loop in $C$ associated to $\gamma_{\bar{a}}$.  As in the discussion above,
  the image of each $\tau_{C,i} \left( [\gamma_a] \right)$ is infinite in $H$.  This shows that $a = \tau_{C,1} \left( [\gamma_a ] \right)$
  is an element of infinite order in the intersection in $G$ of $n$
  essentially distinct conjugate of $H$.  Thus $a$ lies in 
  a conjugate of an element of $\mc{D}$.  Since the filling kernels $N_i$ have finite index in $P_i$, some power of $a$ is contained in
  the kernel of the filling map $G \to \bar{G}$.  But the image of $a$ in $\bar{G}$ is clearly $\bar{a}$, which shows that $\bar{a}$
  cannot have infinite order, contrary to assumption.  This completes the proof. 
\end{proof}

We now prove Theorem \ref{heightdecreases}.
\begin{proof}[Proof of Theorem \ref{heightdecreases}]
  We have $G$ hyperbolic, $H<G$ quasiconvex and height $k\geq 1$.  We
  then have $(H,\mc{D})$ relatively quasiconvex in $(G,\mc{P})$ where
  $\mc{D}$ is the malnormal core of $H$, and
  $\mc{P}=\{P_1,\ldots,P_m\}$ is the peripheral structure induced on
  $G$.

  Let $\phi\co G\to G(N_1,\ldots,N_m)$ be a sufficiently long $H$--filling, that
  the conclusions of Theorem \ref{oldstuff} and Theorem
  \ref{heightgoesdown} both hold, and suppose $N_i\dot{\unlhd} P_i$
  for each $i$.

  By Theorem \ref{oldstuff},   $\bar{G}=G(N_1,\ldots,N_m)$ is hyperbolic
  relative to $\bar{P}=\{P_1/N_1,\ldots,P_m/N_m\}$, and
  $(\bar{H},\bar{\mc{D}})$ is relatively quasiconvex in $(\bar{G},\bar{\mc{P}})$.
  Since all the
  peripheral subgroups are finite, $\bar{G}$ is hyperbolic, and
  $\bar{H}$ is a quasiconvex subgroup of $\bar{G}$.  By Theorem
  \ref{heightgoesdown}, the height of $\bar{H}$ in $\bar{G}$ is at
  most $k-1$.
\end{proof}

The next theorem is the same as \cite[Theorem 13.1]{Wise11}, but we
point out some simplifications to the proof.
\begin{theorem}
Let $G=A\ast_C B$ be an amalgamated free product with $C$ quasiconvex in $G$, and $G$ hyperbolic.
Suppose that $A, B$ are virtually special.  Then $G$ is virtually special. There is a similar
statement in the case that $G=A\ast_B$ where $B$ is quasiconvex in $G$, and $A,B$ are virtually special. 
\end{theorem}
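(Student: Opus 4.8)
The plan is to derive both statements immediately from the characterization of virtually special word-hyperbolic groups as exactly the members of the class $\mathcal{QVH}$ (Theorem \ref{QVH}), together with the closure properties of $\mathcal{QVH}$ recorded in Definition \ref{qvh}. This is the promised simplification over the original argument: once Theorem \ref{QVH} is available, there is essentially nothing to do beyond checking that the hypotheses of the relevant closure property hold, and in particular one never needs to manipulate special cube complexes directly. Note that $G$ is allowed to have torsion here; this causes no difficulty since Theorem \ref{QVH} applies to all word-hyperbolic groups, not merely torsion-free ones.

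First I would record two standard facts about the edge and vertex groups. Since $C$ is quasiconvex in the word-hyperbolic group $G$, it is finitely generated and the inclusion $C \hookrightarrow G$ is a quasi-isometric embedding. Moreover, for a splitting of a word-hyperbolic group over a quasiconvex edge group, the vertex groups are themselves quasiconvex, hence word-hyperbolic; I would cite this as well known. Thus $A$ and $B$ are word-hyperbolic, and being virtually special by hypothesis, Theorem \ref{QVH} gives $A, B \in \mathcal{QVH}$. Now item (2) of Definition \ref{qvh} applies to $G = A \ast_C B$ — with the role of the finitely generated, quasi-isometrically embedded edge group played by $C$ — so $G \in \mathcal{QVH}$, and applying Theorem \ref{QVH} once more yields that $G$ is virtually special.

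For the HNN case $G = A \ast_B$ with $B$ quasiconvex in $G$, the argument is identical: $B$ is finitely generated and quasi-isometrically embedded, and $A$ is quasiconvex, hence word-hyperbolic, so $A \in \mathcal{QVH}$ by Theorem \ref{QVH}; item (3) of Definition \ref{qvh} then gives $G \in \mathcal{QVH}$, whence $G$ is virtually special by Theorem \ref{QVH}. The only point in the whole argument that is not pure bookkeeping is the quasiconvexity — equivalently, the word-hyperbolicity — of the vertex/base group, which is exactly what is needed to legitimately invoke Theorem \ref{QVH} for it; all of the genuine content has been absorbed into Theorem \ref{QVH} and the definition of $\mathcal{QVH}$.
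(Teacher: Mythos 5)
Your argument is circular. Theorem~\ref{QVH} (Wise's Theorem~13.3, that every word-hyperbolic group in $\mathcal{QVH}$ is virtually special) is proved by induction on the hierarchy defining membership in $\mathcal{QVH}$, and the inductive step is precisely a combination theorem of the form being proved here: ``if $G = A\ast_C B$ with $C$ quasiconvex and $A,B$ virtually special, then $G$ is virtually special,'' and similarly for HNN extensions. In Wise's numbering this is Theorem~13.1, which the paper's theorem in question explicitly identifies itself with (``the same as [Wise, Theorem~13.1], but we point out some simplifications to the proof''). So Theorem~\ref{QVH} logically depends on the statement you are trying to prove; invoking it here assumes the conclusion.

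The tip-off that something went wrong is that your argument proves the theorem with ``essentially nothing to do,'' whereas the paper's proof is nontrivial: it runs the Dehn-filling machinery (Theorem~\ref{oldstuff}), the malnormal virtually special quotient theorem (Theorem~\ref{malnormal}), and the \emph{malnormal} special combination theorem (Wise's Theorem~11.3), to first collapse $C$ to a finite quasiconvex almost-malnormal subgroup in a suitable hyperbolic quotient $\bar G = \bar A\ast_{\bar C}\bar B$ with $\bar A,\bar B$ virtually special, from which residual finiteness and hence separability of $C$ in $G$ follows, and only then passes to a finite-index subgroup $G'\lhd G$ where the induced splitting has malnormal quasiconvex edge groups, so that Wise's Theorem~11.2 applies. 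The ``promised simplification'' the paper refers to is the replacement of Wise's cubical small-cancellation arguments by relatively hyperbolic Dehn filling; it is not license to cite the end product $\mathcal{QVH}\Rightarrow$ virtually special, which this very theorem is a step toward establishing. Your proof needs to be discarded and the reduction-to-malnormal strategy via Dehn filling carried out.
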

\begin{proof}
We will focus on the amalgamated
product case; the HNN case is similar, or follows as a corollary by a doubling trick.
As in the proof of \cite[Theorem 13.1]{Wise11}, it suffices to prove that $C<G$ is separable.   Let  $g\notin C$ be an element that we would like to separate from $C$.  By Theorem 
\ref{oldstuff}.\eqref{separateg}, for all sufficiently long $C$-fillings  $\phi:G\to \overline{G}$, $\phi(g)\notin \phi(C)$.  

Now, take a sequence of fillings as in the proof of Theorem \ref{filling}, ensuring at each stage that the filling maps separate $g$ from the image of $C$, so that we obtain a quotient $\phi \co G \onto \bar{G}$ so that:
\begin{enumerate}
\item $\bar{G}$ is hyperbolic;
\item $\bar{C} = \phi(C)$ is finite; and
\item $\phi(g)\notin \phi(C)$;
\end{enumerate}
The maps $\phi_A = \phi|_A \co A \to \bar{A}$ and $\phi_B \co B \to \bar{B}$ are fillings of $A$ and $B$ respectively.  
By choosing the sequence of fillings defining $\bar{G}$ to be sufficiently long and in appropriately chosen subgroups as in the proof of Theorem \ref{filling}, 
we can ensure that at each stage the maps restricted to the images of $A$ and $B$ satisfy the hypotheses of Theorem \ref{malnormal}.  
Therefore, for a long enough filling we have $\bar{A}$ and $\bar{B}$ are virtually special.

Each of the fillings kernels (taken successively) are normally generated by elements that are in the image of $C$.   Therefore, by taking the obvious presentation for $G$ as an amalgamated free product, and adding relations from $C$ to get $\bar{G}$, we see that
\[	\bar{G} = \bar{A} \ast_{\bar{C}} \bar{B}	.	\]
Since $\bar{C}$ is finite, it is almost malnormal and quasiconvex, so
by the Malnormal Special Combination Theorem
\cite[Theorem 11.3]{Wise11} $\bar{G}$ is virtually special, and hence residually finite.  Let $\eta \co \bar{G} \to Q$ be a homomorphism to a finite group $Q$ so that $\eta(\phi(g))\notin \eta(\bar{C})$.  It is clear that the kernel of $\eta \circ \phi$ is a torsion-free subgroup of $G$ of finite-index separating $g$ from $C$.

Now we finish the proof as in \cite[Theorem 13.1]{Wise11}. Since $C$ is quasiconvex and separable in $G$, there is a finite-index normal subgroup $G'\dot{\lhd} G$ in which $C'=C\cap G'$ is 
malnormal in $G'$. Then $G'$ has a graph-of-groups decomposition with
virtually special vertex groups and malnormal edge groups. 
In particular, $G'$ has a malnormal hierarchy terminating in virtually
special groups, so $G'$ is virtually special by \cite[Theorem
  11.2]{Wise11}.
\end{proof}

\begin{theorem} \label{QVH} \cite[Theorem 13.3]{Wise11}
Every word-hyperbolic group in $\mc{QVH}$ is virtually special. 
\end{theorem}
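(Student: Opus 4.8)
The plan is to argue by induction on the construction of the class $\mc{QVH}$ in Definition \ref{qvh}. Since $\mc{QVH}$ is by definition the \emph{smallest} class of hyperbolic groups closed under operations (1)--(4), it suffices to verify that the subclass of those groups in $\mc{QVH}$ that are virtually special is itself closed under (1)--(4); minimality then forces it to equal all of $\mc{QVH}$. Note that the word-hyperbolicity hypothesis in the statement is automatic, since membership in $\mc{QVH}$ already entails it.

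Operation (1) is the base case and is immediate, as $1$ is special. Operation (4) is equally easy: if $H\dot{\leq}G$ with $H\in\mc{QVH}$ virtually special, choose a finite-index subgroup $H_0\dot{\leq}H$ that is the fundamental group of a special cube complex; then $H_0\dot{\leq}G$, so $G$ is virtually special. For operations (2) and (3) we appeal to the amalgam/HNN combination theorem established immediately above. Suppose $G=A\ast_B C$ with $A,C\in\mc{QVH}$ and $B$ finitely generated and quasi-isometrically embedded in $G$. Then $B$ is quasiconvex in the hyperbolic group $G$, the vertex groups $A$ and $C$ are virtually special by the inductive hypothesis, and the combination theorem yields that $G=A\ast_B C$ is virtually special. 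The HNN case $G=A\ast_B$ is handled the same way using the HNN form of that theorem; here one also needs the edge group $B$ to be virtually special, which holds because $B$ sits inside the vertex group $A$ as a quasiconvex subgroup (vertex groups of a hyperbolic graph of groups with quasiconvex edge groups are quasiconvex in the total group by \cite{Kapovich01}, so $B$ is quasiconvex in $A$), and a quasiconvex subgroup of a virtually special group with hyperbolic fundamental group is itself virtually special, being represented after passage to a finite cover by a convex subcomplex of a special cube complex (cf. \cite{HaglundWise07} and the argument in the proof of Theorem \ref{virtualgluing}). The general finite graph-of-groups formulation mentioned after Definition \ref{qvh} reduces to iterated amalgams and HNN extensions by collapsing the edges of the underlying graph one at a time, so it is subsumed as well.

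This completes the induction. In effect all the real work has been done already: the entire content of Theorem \ref{QVH} is the amalgam/HNN combination theorem proved just above, which in turn rests on the relatively hyperbolic Dehn-filling results of the appendix (Theorems \ref{oldstuff} and \ref{heightdecreases}) together with Wise's malnormal virtually special quotient theorem (Theorem \ref{malnormal}). The only steps that require any care at all are the bookkeeping for operation (4) and the remark that the edge group is virtually special in the HNN case; neither is a genuine obstacle, so the main difficulty of Theorem \ref{QVH} is entirely inherited from the combination theorem and the appendix.
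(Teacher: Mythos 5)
Your proposal is correct and is exactly the argument the paper has in mind: the paper gives no proof of Theorem \ref{QVH}, instead citing \cite[Theorem~13.3]{Wise11}, which is deduced there (and here, implicitly) from the combination theorem proved immediately above by precisely the minimality/induction argument on the defining closure properties of $\mc{QVH}$ that you describe. Your observation that the HNN edge group $B$ is automatically virtually special (quasiconvex in the quasiconvex vertex group $A$, hence virtually special as a quasiconvex subgroup of a virtually special hyperbolic group) is the one point requiring care, and you handle it correctly.
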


\bibliographystyle{hamsplain.bst}
\bibliography{refs}

\end{document}